\renewcommand\@biblabel[1]{#1.}
\newtheorem{thm}{Theorem}
\newtheorem{lem}{Lemma}
\newtheorem{prop}{Proposition}
\newtheorem{cor}{Corollary}
\theoremstyle{definition}
\newtheorem{df}{Definition}
\newtheorem{Rem}{Remark}
\theoremstyle{definition}
\newtheorem{ex}{Example}
\renewcommand{\subset}{\subseteq}
\newcommand\mc{\mathcal }
\newcommand\mf{\mathfrak }
\newcommand{\R}{{\mathbb R}}
\newcommand{\ls}{\leq^{\star}}
\newcommand{\on}{\operatorname}
\newcommand{\ce}{\mf c}
\newcommand{\pt}{{\mathcal P}}
\newcommand\A{{\mathcal A}}
\DeclareMathOperator\card{card}
\DeclareMathOperator\supp{supp}
\newcommand{\I}{\mathcal I}
\newcommand{\J}{\mathcal J}
\newcommand{\Z}{\mathcal Z}
\title[Properties of simple density ideals]{Properties of simple density ideals}
\author{Adam Kwela}
\thanks{The first and fourth authors have been supported by the grant BW-538-5100-B482-17.}
\address{Institute of Mathematics, Faculty of Mathematics, Physics and Informatics, University of Gda\'{n}sk, ul.~Wita Stwosza 57, 80-308 Gda\'{n}sk, Poland}
\email{adam.kwela@ug.edu.pl}
\author{Micha{\l} Pop{\l}awski}
\address{Institute of Mathematics,
          University of Silesia, ul. Bankowa 14,
         40-007 Katowice,
         Poland}
         \email{michal.poplawski@us.edu.pl}
\author{Jaros{\L}aw Swaczyna}
\thanks{The research of the third author has been supported by the Polish Ministry of Science and Higher Education (a part of the project “Diamond grant”, 2014-17, No. 0168/DIA/2014/43).}
\address{Institute of Mathematics, \L \'od\'z University of Technology,
W\'olcza\'nska 215, 93-005 \L \'od\'z, Poland}
\email {jswaczyna@wp.pl}
\author{Jacek Tryba}
\address{Institute of Mathematics, Faculty of Mathematics, Physics and Informatics, University of Gda\'{n}sk, ul.~Wita Stwosza 57, 80-308 Gda\'{n}sk, Poland}
\email{jtryba@mat.ug.edu.pl}
\begin{document}
\begin{abstract}
Let $G$ consist of all functions $g \colon \omega \to [0,\infty)$ with $g(n) \to \infty$ and $\frac{n}{g(n)} \nrightarrow 0$. Then for each $g\in G$ the family $\Z_g=\{A\subseteq\omega:\ \lim_{n\to\infty}\frac{\card(A\cap n)}{g(n)}=0\}$ is an ideal associated to the notion of so-called upper density of weight $g$. Although those ideals have recently been extensively studied, they do not have their own name. In this paper, for Reader's convenience, we propose to call them \emph{simple density ideals}. 

We partially answer \cite[Problem 5.8]{kwetry} by showing that every simple density ideal satisfies the property from \cite[Problem 5.8]{kwetry} (earlier the only known example was the ideal $\Z$ of sets of asymptotic density zero). We show that there are $\ce$ many non-isomorphic (in fact even incomparable with respect to Kat\v{e}tov order) simple density ideals. Moreover, we prove that for a given $A\subset G$ with $\card(A)<\mf{b}$ one can construct a family of cardinality $\ce$ of pairwise incomparable (with respect to inclusion) simple density ideals which additionally are incomparable with all $\Z_g$ for $g\in A$. We show that this cannot be generalized to Kat\v{e}tov order as $\Z$ is maximal in the sense of Kat\v{e}tov order among all simple density ideals. We examine how many substantially different functions $g$ can generate the same ideal $\Z_g$ -- it turns out that the answer is either $1$ or $\ce$ (depending on $g$).
\end{abstract}
\maketitle
\section{Introduction}
Denote $\omega:=\{0,1,\ldots\}$. We identify $n$ with the set $\{0,\ldots,n-1\}$. By $\card(A)$ we denote cardinality of the set $A$.
Let us recall some basic definitions and facts about ideals. A family $\mathcal{I}$ of subsets of $\omega$ is called an ideal on $\omega$ if the following conditions hold:
\begin{itemize}
\item $\emptyset \in \mathcal{I}$ and $\omega \notin \mathcal{I}$,
\item $A,B \in \mathcal{I} \Rightarrow A \cup B \in \mathcal{I}$,
\item if $A \subset B$ and $B \in \mathcal{I}$, then $A \in \mathcal{I}.$
\end{itemize}
One of the simplest examples of an ideal on $\omega$ is the family $Fin$ of all finite subsets of $\omega$. In this paper we assume about all considered ideals that $Fin \subset \I$. \\
Now we will recall one of the well-known examples of ideals, namely the classical density ideal. Let us denote the upper density of a set $A \subset \omega $ by:
$$\overline{d}(A):=\limsup_{n \to \infty}\frac{\card{(A \cap n)}}{n}$$ (one can define the lower density $\underline{d}(A)$ in an analogous way). The classical density ideal is one of the form: 
$$
\mathcal{Z}:=\{A \subset \omega \colon \overline{d}(A)=0\}.
$$
This ideal has been deeply investigated in the past in the context of convergence (see e.g. \cite{Fast}, \cite{Fridy}, \cite{Salat} and \cite{Steinhaus}) as well as from the set-theoretic point of view (see e.g. \cite{Farah} and \cite{Just}).

Over many years various ways of generalizing the ideal $\Z$, the density function $\overline{d}$ or statistical convergence (which is a notion of convergence associated to $\Z$) have been considered (see e.g. \cite{1}, \cite{3}). This paper concentrates on one of those generalizations. In the paper \cite{Den} authors proposed the following. Let 
$$G:=\{g \colon \omega \to (0,\infty) \colon\ g(n) \to \infty  \wedge \frac{n}{g(n)} \nrightarrow 0 \}.$$
Fix $g \in G$. For any $A \subset \omega$ define 
$$\overline{d_g}(A):=\limsup_{n \to \infty} \frac{\card{(A \cap n)}}{g(n)},$$
called the upper density of weight $g$.
Now, define a family
$$\mathcal{Z}_g:=\{A \subset \omega \colon \overline{d_g}(A)=0\}.$$
It turns out that $\mathcal{Z}_g$ is an ideal on $\omega$ for any $g \in G$. Although ideals of the form $\Z_g$ have recently been extensively studied (see the next paragraph), they do not have their own name and are usually called ideals associated to upper density of weight $g$. In this paper, for Reader's convenience, we propose to call ideals of the form $\Z_g$ \emph{simple density ideals}. Note that the condition $g(n) \to \infty$ is equivalent to $\mc{Z}_g \neq \{ \emptyset \}$, while $\frac{n}{g(n)} \not\to 0$ is introduced to assure that $\omega \notin \mc{Z}_g$. Another basic observation is that for $g = id_{\omega}$ we have $\mc{Z}_g = \mc{Z}$, so the classical density ideal is a special case of simple density ideal. In paper \cite{Den} authors have studied some structural properties of such ideals as well as inclusions between them. 

One motivation for studying simple density ideals is related to various notions of convergence -- the class of simple density ideals (as well as density functions related to them) has recently been intensively studied in this context. For an ideal on $\omega$ we say that a sequence of reals $(x_n)$ is $\I$-convergent to $x\in\mathbb{R}$ if $\{n\in\omega:\ |x_n-x|\geq\varepsilon\}\in\I$ for each $\varepsilon>0$. For instance, in \cite{5} the authors have studied which subsequences of a given $\Z_g$-convergent sequence, where $g\in G$, are also $\Z_g$-convergent. In \cite{6} the notion of $\Z_g$-convergence has been compared (in the context of fuzzy numbers) with the notion of so-called $\I$-lacunary statistical convergence of weight $g$. Paper \cite{4} is in turn an investigation of a variant of convergence associated to simple density ideals and matrix summability methods. 

Our goal is to investigate further properties of the family $\{ \mc{Z}_g : g \in G \}$. Obviously, we will use some results from \cite{Den} in the proceeding sections. Now we cite just one of them, in order to make thinking about simple density ideals a little bit simpler:
\begin{prop}
For any $f \in G$ there exists such a nondecreasing $g \in G$ that $\mc{Z}_f = \mc{Z}_g$. \qed
\end{prop}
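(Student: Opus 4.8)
The plan is to take an arbitrary $f\in G$ and replace it by the ``running supremum'' type function $g(n):=\max_{k\le n} f(k)$, which is nondecreasing by construction, and then to verify that $g$ still belongs to $G$ and that $\Z_f=\Z_g$. First I would check $g\in G$: since $f$ takes positive values, so does $g$; since $f(n)\to\infty$, for every $M$ there is $k_0$ with $f(k)\ge M$ for $k\ge k_0$, hence $g(n)\ge M$ for $n\ge k_0$, so $g(n)\to\infty$. The only subtle point in checking membership in $G$ is the condition $\frac{n}{g(n)}\nrightarrow 0$. Here I would argue from $\frac{n}{f(n)}\nrightarrow 0$: there is $\varepsilon>0$ and an infinite set of indices $n$ with $f(n)\le n/\varepsilon$. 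For such an $n$ we do not immediately control $g(n)=\max_{k\le n}f(k)$, so the naive bound fails; this is the step I expect to be the main obstacle, and I would handle it by the following observation. If $\frac{n}{g(n)}\to 0$, then since $g$ is nondecreasing, for any $n$ pick the largest $m\le n$ with $g(m)=g(n)$; then $g(m)=f(m)$ (the max is attained at $m$, as $g$ strictly increased just before or $m=0$), and $\frac{m}{f(m)}=\frac{m}{g(n)}\ge$ something forcing $\frac{m}{f(m)}\to 0$ along a suitable subsequence — here one has to be a little careful, and the clean way is to show directly that $\frac{n}{g(n)}\to 0$ implies $\frac{n}{f(n)}\to 0$, contradicting $f\in G$.

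For the equality of ideals, note that $g(n)\ge f(n)$ for all $n$, hence $\frac{\card(A\cap n)}{g(n)}\le\frac{\card(A\cap n)}{f(n)}$, so $\overline{d_f}(A)=0$ immediately gives $\overline{d_g}(A)=0$; that is, $\Z_f\subseteq\Z_g$. For the reverse inclusion, suppose $A\in\Z_g$, i.e.\ $\frac{\card(A\cap n)}{g(n)}\to 0$. Fix $n$ and let $m=m(n)\le n$ be an index where the maximum defining $g(n)$ is attained, so $g(n)=f(m)$. Since $A\cap m\subseteq A\cap n$ we get
\[
\frac{\card(A\cap m)}{f(m)}\le\frac{\card(A\cap n)}{g(n)}.
\]
As $n\to\infty$ we have $m(n)\to\infty$ (because $g(n)\to\infty=f(m(n))$ forces $m(n)\to\infty$), and the right-hand side tends to $0$; since $f$ is eventually positive this shows that $\frac{\card(A\cap m)}{f(m)}\to 0$ along the sequence $m=m(n)$, which is cofinal in $\omega$. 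To conclude $\overline{d_f}(A)=0$ one still needs to fill the gaps between consecutive values of $m(n)$: for $m(n)<k<m(n+1)$ one has $\card(A\cap k)\le\card(A\cap m(n+1))$ while $f(k)$ may be smaller than $f(m(n+1))$. The way around this is to use that on such a gap $g$ is constant equal to $f(m(n+1))$, so actually $\card(A\cap k)\le\card(A\cap k')$ for the successor index $k'$ with $g(k')$ jumping, and a direct $\varepsilon$-estimate ties $\frac{\card(A\cap k)}{f(k)}$ to a nearby small quantity; I would spell this out as: given $\varepsilon>0$, choose $N$ with $\frac{\card(A\cap n)}{g(n)}<\varepsilon$ for $n\ge N$, and then for $k\ge N$ with maximizing index $m\le k$, $\frac{\card(A\cap k)}{f(k)}$ is bounded using $f(k)\le g(k)=f(m)$ together with a separate bound on $\card(A\cap k)\setminus(A\cap m)$ — but in fact the cleanest route is simply that $f(k)\le g(k)$ gives $\card(A\cap k)/f(k)\ge\card(A\cap k)/g(k)$ in the \emph{wrong} direction, so one instead argues that for the ideal it suffices to control the $\limsup$ along \emph{all} $k$, and $\limsup_k \card(A\cap k)/f(k)=\limsup_k\card(A\cap k)/g(k)$ because the two denominators agree at every index $k=m(n)$ where $g$ attains a new maximum and these indices carry the $\limsup$ of $\card(A\cap k)$.

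So the overall structure is: (1) define $g$ as the running maximum of $f$; (2) verify $g\in G$, the only real work being $\frac{n}{g(n)}\nrightarrow 0$, done by contraposition from $f\in G$; (3) prove $\Z_f=\Z_g$, where $\subseteq$ is trivial from $g\ge f$ and $\supseteq$ uses the maximizing indices together with the monotonicity of $\card(A\cap\cdot)$. The main obstacle, as indicated, is the bookkeeping in step (3) for the reverse inclusion — making sure that controlling the density quotient at the ``record'' indices $m(n)$ really controls the full $\limsup$; this is where I would be most careful, isolating the claim that $\overline{d_f}(A)=\limsup_{n\in R}\card(A\cap n)/f(n)$ where $R=\{m(n):n\in\omega\}$ is the set of indices at which $g$ strictly increases.
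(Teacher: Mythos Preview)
The paper does not prove this proposition; it is quoted without argument from \cite{Den}. More to the point, your running-maximum construction $g(n)=\max_{k\le n}f(k)$ is wrong, not merely sketchy at the places you flag. Take $f(n)=n^2$ for odd $n$ and $f(n)=\sqrt{n}+1$ for even $n$: then $f\in G$ (since $n/f(n)\to\infty$ along the evens), but $g(n)\ge(n-1)^2$ for $n\ge 2$, so $n/g(n)\to 0$ and $g\notin G$; indeed $\Z_g=\mathcal P(\omega)$ while $\omega\notin\Z_f$, so $\Z_f\ne\Z_g$. Your contraposition ``$n/g(n)\to 0\Rightarrow n/f(n)\to 0$'' fails exactly here: the record indices are the odd numbers and $m/f(m)=1/m\to 0$ along them, but this says nothing about the even $n$. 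The same example refutes the claim in your step~(3) that the record set $R$ carries the full $\limsup$ of $\card(A\cap k)/f(k)$: with $A=\omega$ this quotient tends to $0$ on $R$ yet to $\infty$ on the evens.

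The construction that does work goes the other direction: set $g(n)=\min_{k\ge n}f(k)$ (a minimum, since $f\to\infty$). Then $g$ is nondecreasing, and $g\le f$ immediately gives $n/g(n)\ge n/f(n)\nrightarrow 0$, so $g\in G$. From $g\le f$ one gets $\Z_g\subseteq\Z_f$ trivially; for the converse, given $A\in\Z_f$ and $n$, pick $m\ge n$ with $f(m)=g(n)$ and note $\card(A\cap n)/g(n)\le\card(A\cap m)/f(m)\to 0$. A running maximum can overshoot $f$ and destroy the condition $n/g(n)\nrightarrow 0$; the running minimum stays below $f$, which is precisely what both $G$-membership and the nontrivial inclusion need.
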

Another easy but useful observation is that $\Z_f = \Z_{\lfloor f \rfloor}$, thus we introduce two additional notations:
$$
H:= G \cap \omega^\omega \mbox{ and } H^\uparrow:= \{ f \in H : f \mbox{ is nondecreasing}\}.
$$

We treat $\mathcal{P}(X)$ as the space $2^X$ of all functions $f:X\to 2$ (equipped with the product topology, where each space $2=\left\{0,1\right\}$ carries the discrete topology) by identifying subsets of $X$ with their characteristic functions. All topological and descriptive notions in the context of ideals on $X$ will refer to this topology. 

The paper is organized as follows. Section \ref{sekrozne} is devoted to investigations of some basic properties of the family $\{\Z_g:g\in G\}$. Namely, we show that $\bigcap_{g \in G} \mathcal{Z}_g= Fin$ and $\bigcup_{g \in G} \mathcal{Z}_g$ is equal to $\{A\subset\omega:\underline{d}(A)=0\}$. Moreover, we give a direct proof of the fact that all $\Z_g$ are not $F_{\sigma}$. This has been shown in \cite{Den}, but with the use of results of Farah from \cite{Farah}. 

Section \ref{sekgenerowanieroznymifcjami} investigates how many substantially different functions $g\in G$ can generate the same ideal $\Z_g$. In turns out that the answer is either $1$ or $\ce$. We provide a suitable characterization. 

Section \ref{sekwstepmnog} is not related to ideals on $\omega$ -- it contains various, rather well-known, observations concerning partially ordered sets and the bounding number $\mf{b}$. We will use them in Section \ref{sekporzadekmiedzyidealami} to show that for a given $A\subset G$ with $\card(A)<\mf{b}$ one can construct a family of cardinality $\ce$ of pairwise incomparable (with respect to inclusion) simple density ideals which additionally are incomparable with all $\Z_g$ for $g\in A$. Moreover, in Section \ref{sekporzadekmiedzyidealami} we construct an antichain in the sense of Kat\v{e}tov order of size $\ce$ among simple density ideals. As a consequence, we obtain $\ce$ many pairwise non-isomorphic ideals $\Z_g$. In addition, we prove that $\Z$ is maximal in the sense of Kat\v{e}tov order among all ideals $\Z_g$. 

Section \ref{sekGD+FD+EU} compares simple density ideals with Erd\H{o}s-Ulam ideals and density ideals in the sense of Farah. We characterize when $\Z_g$ is an Erd\H{o}s-Ulam ideal and provide some examples (also in the context of our results from Section \ref{sekgenerowanieroznymifcjami}). 

Finally, in Section \ref{sekhomogeneity} we partially answer \cite[Problem 5.8]{kwetry} by showing that every increasing-invariant ideal $\I$ (i.e., such ideal that $B\in\I$ and $\card(C\cap n)\leq\card(B\cap n)$ for each $n$ implies $C\in\I$)
fulfills that if $\I\upharpoonright A$ for some $A\subset\omega$ is isomorphic to $\I$, then a witnessing isomorphism can be given by just the increasing enumeration of $A$. As every $\Z_g$ is increasing-invariant, we obtain $\ce$ examples of ideals satisfying the above property. We end with an example of an anti-homogeneous simple density ideal.

\section{Basic properties of simple density ideals}\label{sekrozne}
We start this Section with answering the question which has appeared during the presentation of the results of paper \cite{Den} on Wroc\l aw Set Theory Seminar: what do the sets  $\bigcap_{g \in G} \mathcal{Z}_g $ and $\bigcup_{g \in G} \mathcal{Z}_g$ look like?

It is easy to see that $\bigcap_{g \in G} \mathcal{Z}_g=Fin$. Indeed, take any infinite $A \subset \omega$. Then the function $g$ given by $g(n)=\card (A\cap n)$ is in $G$ and obviously $\card(n \cap A) / g(n) \nrightarrow 0$, hence $A \notin \mc{Z}_g$. 

Now we move to $\bigcup_{g \in G} \mathcal{Z}_g$. We will denote the family of lower density zero subsets of $\omega$ by $ \mathcal{Z}_l:=\{A \subset \omega \colon \underline{d}(A)=0\}$. Our goal is to show that $\mc{Z}_l = \bigcup_{g \in G} \mathcal{Z}_g$. To do so, take any $g \in G$ and such $A \subset \omega$ that $\limsup \frac{\card{(A \cap n)}}{g(n)}=0$. Note that since $\frac{n}{g(n)} \nrightarrow 0$ and $g(n) > 0$ for $n \in \omega$, there exists such an increasing sequence $(n_k)_{k \in \omega}$ of naturals that $(\frac{g(n_k)}{n_k})$ is bounded. Hence,
$$
\frac{\card{(A \cap n_k)}}{n_k}=\frac{\card{(A \cap n_k)}}{g(n_k)}\frac{g(n_k)}{n_k} \xrightarrow{k \to \infty} 0,
$$
which proves that $\liminf \card{(A \cap n)}/n=0$. This means that $A \in \mathcal{Z}_l$. \\
Now, take any $A \in \mathcal{Z}_l$. We infer that there is an increasing sequence $(n_i)$ of naturals, satisfying:
$$
\frac{\card{(A \cap n_i)}}{n_i} \rightarrow 0.
$$ 
Define a function $g \colon \omega \to [0,\infty)$ in the following way:
$$
g(n) := \min \{ n_i : n \leq n_i \}.
$$
Note that $g(n) \rightarrow \infty$, as $(n_i)$ is increasing. Moreover, since $g(n_i ) = n_i $, we have $\frac{n_i}{g(n_i)} = 1$ for any $ i \in \omega$, hence $\frac{n}{g(n)} \nrightarrow 0$. As a consequence, $g \in G$. We will prove that $A \in \mathcal{Z}_g$. Take some $n \in \omega$. Then $g(n) = n_i$ for some $i \in \omega$, $n \to \infty$ implies $i \to \infty$ and
$$
\frac{\card{(A \cap n)}}{g(n)}=\frac{\card{(A \cap n)}}{n_{i}} \leq \frac{\card{(A \cap n_{i})}}{n_{i}} \xrightarrow{i \to \infty} 0 .
$$
Thus, $A \in \mathcal{Z}_g$ and we obtain a complete answer to the question from the beginning of this Section:
\begin{prop}
$\bigcup_{g \in G} \mathcal{Z}_g=\mathcal{Z}_l$ and $\bigcap_{g \in G} \mathcal{Z}_g= Fin$. \qed
\end{prop}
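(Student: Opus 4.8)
The plan is to prove each of the two equalities by a double inclusion, disposing of the intersection first since it is the simpler of the two.

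For $\bigcap_{g\in G}\mathcal{Z}_g=Fin$, the inclusion $\supseteq$ is immediate: for any $g\in G$ and any finite $A$, the numerator $\card(A\cap n)$ is eventually constant while $g(n)\to\infty$, so $\overline{d_g}(A)=0$. For the reverse inclusion I would fix an infinite $A\subseteq\omega$ and produce a single $g\in G$ with $A\notin\mathcal{Z}_g$; the obvious choice is $g(n)=\card(A\cap n)$ (altered on the finitely many $n$ where it vanishes, which affects neither membership in $G$ nor $\mathcal{Z}_g$). This $g$ tends to infinity because $A$ is infinite, satisfies $n/g(n)\geq 1$ eventually and hence $n/g(n)\not\to 0$, and gives $\card(A\cap n)/g(n)=1$, so $\overline{d_g}(A)=1\neq 0$.

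For $\bigcup_{g\in G}\mathcal{Z}_g=\mathcal{Z}_l$ I would argue both inclusions. To see $\subseteq$, fix $g\in G$ and $A\in\mathcal{Z}_g$; since $n/g(n)\not\to 0$ there is an increasing sequence $(n_k)$ along which $g(n_k)/n_k$ stays bounded, and then
$$
\frac{\card(A\cap n_k)}{n_k}=\frac{\card(A\cap n_k)}{g(n_k)}\cdot\frac{g(n_k)}{n_k}\xrightarrow{k\to\infty}0,
$$
because $\overline{d_g}(A)=0$ forces the first factor to $0$ while the second is bounded; hence $\underline{d}(A)=0$, i.e.\ $A\in\mathcal{Z}_l$. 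For $\supseteq$, given $A$ with $\underline{d}(A)=0$ I would pick an increasing sequence $(n_i)$ with $\card(A\cap n_i)/n_i\to 0$ and set $g(n):=\min\{n_i:n\leq n_i\}$. This step function is nondecreasing, satisfies $g(n)\geq n$ (so $g(n)\to\infty$) and $g(n_i)=n_i$ (so $n_i/g(n_i)=1\not\to 0$), whence $g\in G$; and writing $g(n)=n_i$ we get $\card(A\cap n)/g(n)\leq\card(A\cap n_i)/n_i\to 0$ as $n\to\infty$, so $\overline{d_g}(A)=0$ and $A\in\mathcal{Z}_g$.

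I expect the inclusion $\mathcal{Z}_l\subseteq\bigcup_{g\in G}\mathcal{Z}_g$ to be the only step requiring any ingenuity, and hence the main obstacle: one must build a weight $g$ from the witnessing subsequence that is at once slim enough to remain in $G$ (so that $n/g(n)\not\to 0$) and fat enough compared with $\card(A\cap n)$ to drag $A$ down into $\mathcal{Z}_g$. The step-function choice that pins $g$ to the value $n_i$ throughout the block $(n_{i-1},n_i]$ reconciles these opposing demands, since there $n/g(n)=1$ at the endpoints while $\card(A\cap n)\leq\card(A\cap n_i)$.
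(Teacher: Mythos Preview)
Your proposal is correct and follows essentially the same argument as the paper: the same witness $g(n)=\card(A\cap n)$ for the intersection, the same boundedness-of-$g(n_k)/n_k$ argument for $\bigcup\subseteq\mathcal{Z}_l$, and the identical step function $g(n)=\min\{n_i:n\leq n_i\}$ for the reverse inclusion. If anything you are slightly more careful than the paper, which glosses over the fact that $g(n)=\card(A\cap n)$ vanishes below $\min A$ and hence needs the finite adjustment you mention to land in $G$.
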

In \cite{Den} the authors proved that all simple density ideals are not $F_{\sigma}$ sets, however they used a bit complicated argument based on the density ideals in the sense of Farah. Below we will give a more direct proof of this fact. We will follow methods used in the proof of \cite[Proposition 3]{DrewLab}.

\begin{prop}\label{Fsigma}
$\mc{Z}_g$ is not an $F_{\sigma}$ set for any $g \in G$.
\end{prop}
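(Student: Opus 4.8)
The plan is to show that $\mc{Z}_g$ cannot be $F_\sigma$ by exhibiting a continuous reduction of a non-$F_\sigma$ ideal into it, or equivalently by directly showing that $\mc{Z}_g$ fails the combinatorial property that every $F_\sigma$ ideal on $\omega$ enjoys. I would take the latter, hands-on route, following the quoted argument of \cite[Proposition 3]{DrewLab}. Recall that an ideal $\I$ is $F_\sigma$ iff there is a lower semicontinuous submeasure $\varphi$ on $\omega$ with $\I=\{A:\varphi(A)<\infty\}$; a clean consequence is: if $\I$ is $F_\sigma$ then whenever $\omega=\bigcup_k B_k$ is a partition into finite sets, there is a choice of finite $C_k\subseteq B_k$ with $\bigcup_k C_k\notin\I$ but with the ``$\varphi$-weights'' of the $C_k$ tending to $0$, which is impossible if we can make every such union land in $\I$. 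So the strategy is: assume $\mc{Z}_g=\bigcup_n F_n$ with each $F_n$ closed (and, WLOG by the ideal structure, each $F_n$ hereditary and closed under finite unions — or pass to the submeasure), and derive a contradiction by constructing a set $A\in\mc{Z}_g$ that is not covered by any $F_n$.

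By Proposition~1 I may assume $g\in H^\uparrow$, i.e.\ $g$ is nondecreasing and integer-valued; this makes the density condition easy to control on intervals. The key construction: using $\frac{n}{g(n)}\nrightarrow 0$, fix an increasing sequence $(n_k)$ with $\frac{g(n_k)}{n_k}\le M$ for some constant $M$. On each block $I_k=[n_{k-1},n_k)$ I will have a large ``budget'' of roughly $n_k/M$ many integers that I may place into $A$ while only contributing $O(1)$ to $\frac{\card(A\cap n_k)}{g(n_k)}$ at the right-hand endpoint; more importantly, if I place into block $I_k$ only $o(g(n))$ elements and locate them suitably (e.g.\ near the left end of the block, or spread so that partial counts stay $o(g(n))$ for all $n\in I_k$, using monotonicity of $g$), then the resulting $A$ satisfies $\lim_n\frac{\card(A\cap n)}{g(n)}=0$, so $A\in\mc{Z}_g$. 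The standard diagonalization then runs against the putative cover: enumerate the closed sets $F_n$, and at step $k$ choose a finite piece $C_k\subseteq I_{k}$ large enough (within budget) that no single $F_n$ with $n\le k$ can contain $A=\bigcup C_k$ — this uses that each $F_n$ is closed, so membership of $A$ in $F_n$ is determined by finite initial segments together with a limiting condition; concretely one uses that in an $F_\sigma$ ideal the submeasures $\varphi_n$ generating $F_n$ must, on the blocks $I_k$, eventually assign weight $\le 1$ to any set we are forced to use, and we pick $C_k$ with $\varphi_k$-weight $>1$ while keeping the density contribution $o(g)$.

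Concretely, here is the cleaner packaging I would actually write. Suppose toward a contradiction $\mc{Z}_g=\bigcup_n F_n$ with $F_n$ closed, increasing in $n$, each an ideal of subsets of $\omega$ (replacing $F_n$ by the ideal it generates inside $\mc{Z}_g$ and intersecting with $\mc{Z}_g$; one checks the new sets are still closed). Since $\{A: \card(A\cap n_k)\le k \text{ for all }k\}\subseteq\mc{Z}_g$ — because for $n\in[n_{k-1},n_k)$ we get $\frac{\card(A\cap n)}{g(n)}\le\frac{\card(A\cap n_k)}{g(n_{k-1})}\le\frac{k}{g(n_{k-1})}\to 0$ using $g\to\infty$ (after mildly thinning $(n_k)$ so that $k/g(n_{k-1})\to 0$, still keeping $g(n_k)/n_k$ bounded) — the set $K:=\{A:\card(A\cap n_k)\le k\ \forall k\}$ is a compact subset of $\mc{Z}_g$. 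By the Baire category theorem applied to $K$ (which is a closed subset of $2^\omega$, hence Polish), some $F_{n_0}\cap K$ is non-meager in $K$, hence by a pigeonhole/Mycielski-type argument $F_{n_0}$ contains a set not in $\mc{Z}_g$ — more precisely, $F_{n_0}\supseteq K$ would force $\bigcup_k(I_k$-segment of size $k) \in F_{n_0}$ with the segments chosen to make the lower density positive, contradicting $F_{n_0}\subseteq\mc{Z}_g$; if $F_{n_0}\not\supseteq K$ one iterates inside a relatively clopen subset of $K$.

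The main obstacle, and the step I expect to require the most care, is making the ``budget bookkeeping'' precise: one must simultaneously (i) keep all partial counts $\card(A\cap n)$ small enough relative to the \emph{nondecreasing} function $g$ so that $A\in\mc{Z}_g$ — not just at the endpoints $n_k$ but for every $n$ — and (ii) still force $A$ outside the closed set $F_{n_0}$, which requires putting ``enough mass'' into infinitely many blocks. The monotonicity of $g$ from Proposition~1 is exactly what reconciles these, since it lets a bound at the right endpoint $n_k$ control the whole block $I_k$; and the boundedness of $g(n_k)/n_k$ is what guarantees the blocks are long enough to host the mass needed to escape $F_{n_0}$. I would isolate this as a short lemma (``for any sequence $(m_k)$ with $m_k=o(g(n_{k-1}))$ and $m_k\le n_k-n_{k-1}$, any $A$ with $\card(A\cap I_k)\le m_k$ lies in $\mc{Z}_g$'') and then the diagonalization against $F_{n_0}$ becomes routine.
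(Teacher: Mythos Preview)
Your Baire-category argument with the compact set $K=\{A:\card(A\cap n_k)\le k\ \forall k\}$ has a genuine gap at the contradiction step. Since $K$ is closed in $2^\omega$ and $K\subseteq\mc{Z}_g$, once you obtain $U\cap K\subseteq F_{n_0}$ you cannot produce any element of $F_{n_0}\setminus\mc{Z}_g$ by taking $2^\omega$-limits of points of $U\cap K$: every such limit stays in $K$. Your claim that one can ``choose segments to make the lower density positive'' within $K$ is simply false --- the constraints $\card(A\cap n_k)\le k$ and $k/g(n_{k-1})\to 0$, together with monotonicity of $g$, force $k/g(n_k)\to 0$ as well, so every $A\in K$ genuinely lies in $\mc{Z}_g$ with no slack to exploit. (A smaller but also real issue: ``replace each $F_n$ by the ideal it generates and check it stays closed'' is not valid in general; the ideal generated by a closed set need not be closed.) The earlier submeasure idea you sketch --- pick $C_k\subseteq I_k$ with $\varphi$-weight $>1$ but $g$-density contribution $o(1)$ --- can indeed be made to work, but you abandon it, and it is not what you actually attempt to carry out in the ``Concretely'' paragraph.

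The paper does follow the Drewnowski--Labuda template, and the point you are missing is that Baire must be run in a topology on $\mc{Z}_g$ whose open sets are \emph{not} relatively closed in $2^\omega$, so that $2^\omega$-limits can leak out of $\mc{Z}_g$. Concretely, one equips $Z_g$ with the complete metric $\rho(u,v)=\sup_n g(n)^{-1}\sum_{i\le n}|u(i)-v(i)|$, checks that $\mathrm{id}\colon(Z_g,\rho)\to 2^\omega$ is continuous, and applies Baire to get a $\rho$-ball $B(u,r)\subseteq F_{n_0}$. Such a ball contains every finitely supported $w$ agreeing with $u$ on a long enough initial segment and carrying $g$-weighted mass below $r/2$ beyond it; stacking such pieces along a sequence $(N_i)$ with $\limsup_i i/g(N_i)>0$ (this is exactly where $n/g(n)\not\to 0$ enters) yields $w_i\in B(u,r)$ with $w_i\to w$ in $2^\omega$ and $w\notin Z_g$, contradicting closedness of $F_{n_0}$ in $2^\omega$. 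Your ``budget bookkeeping'' lemma is correct, but it only manufactures sets \emph{inside} $\mc{Z}_g$; the missing ingredient is a neighbourhood basis thick enough to approximate sets \emph{outside}.
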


\begin{proof}
We can assume that $g \in H^\uparrow$. Recall that we usually identify sets $A \subset \omega$ with their characteristic functions $\chi_A \in 2^\omega$. However, in this proof we will use another notation in order to make it more convenient for the Reader, namely we denote $Z_g = \{ \chi_A : A \in \Z_g \}$. Note that for any $A\subset \omega$ we have that $\chi_{A} \in Z_g$ if and only if 
\begin{equation} \label{wr} \frac{1}{g(n)} \sum_{i=0}^n \chi_{A}(i) \rightarrow 0 \end{equation}
as $n \rightarrow \infty.$
Consider a metric $\rho$ on $Z_g$ given by:
$$\rho(u,v)=\sup_{n \in \omega} \frac{1}{g(n)} \sum_{i=0}^{n} \left |u(i)-v(i) \right|$$
for $u,v \in Z_g.$
We will show the completeness of the metric space $(Z_g,\rho)$.
\newline
Take a Cauchy sequence $(t_m)_{m \in \omega}$ in the space $(Z_g,\rho)$. It follows from the definition of $\rho$ that for any $n \in \omega$ a sequence $(t_m(n))_{m \in \omega}$ is Cauchy in the complete metric space $\{0,1\}$. Then there is $t \in 2^{\omega}$ -- a pointwise limit of $(t_m)_{m \in \omega}.$
We will show that $t$ satisfies the condition $(\ref{wr})$. Obviously, an inequality 
\begin{equation} \label{cs} \sup_{n \in \omega} \left|\frac{1}{g(n)} \left(\sum_{i=0}^n t_{k}(i)-\sum_{i=0}^n t_{m}(i)\right)\right| \leq \sup_{n \in \omega} \frac{1}{g(n)} \sum_{i=0}^n \left|t_{k}(i)-t_{m}(i)\right| \end{equation}
holds for all $k,m \in \omega.$
Since the sequence $(t_m)_{m \in \omega}$ is Cauchy in $(Z_{g},\rho)$, it follows from $(\ref{cs})$ then that the sequence $((\frac{1}{g(n)} \sum_{i=0}^n t_{m}(i))_{n \in \omega})_{m \in \omega}$ is a Cauchy sequence in the Banach space $c_0$. As a consequence, the limit 
$$\lim_{m \to \infty} \left(\frac{1}{g(n)} \sum_{i=0}^n t_{m}(i)\right)_{n \in \omega}=:\alpha $$
is a member of $c_0$.
\newline
Since $(t_m)_{m \in \omega}$ tends pointwise to $t$, we get $\frac{1}{g(n)} \sum_{i=0}^n t_{k}(i) \rightarrow \frac{1}{g(n)} \sum_{i=0}^n t(i)$ as $k \rightarrow \infty$ for any $n \in \omega$. Hence, $(\frac{1}{g(n)}\sum_{i=0}^n t(i))_{n \in \omega}=\alpha \in c_0$, thus $t$ satisfies the condition $(\ref{wr})$ and, consequently, $t \in Z_g$. 
\newline
Now we will show that $\rho(t,t_k) \rightarrow 0$ as $k \rightarrow \infty.$ Fix $\varepsilon>0$ and such $N \in \omega$ that $\rho(t_k,t_m)<\varepsilon$ for all $k,m \geq N$. This implies that 
\begin{equation} \frac{1}{g(n)} \sum_{i=0}^n \left|t_k(i)-t_m(i)\right|<\varepsilon \end{equation}
for all $k,m \geq N$ and $n \in \omega.$ Letting $m \to \infty$, we get:
\begin{equation} \label{kc} \frac{1}{g(n)} \sum_{i=0}^n \left|t_k(i)-t(i)\right| \leq \varepsilon \end{equation}
for all $k \geq N$ and $n \in \omega.$
Thus, we obtain:
$$\rho(t_k,t)=\sup_{n \in \omega} \frac{1}{g(n)} \sum_{i=0}^n \left|t_k(i)-t(i)\right| \leq \varepsilon$$
for all $k \geq N$. Hence, $\rho(t_k,t) \rightarrow 0$ as $k \rightarrow \infty.$
\newline
We proved that $(Z_g , \rho)$ is complete. Since $t_m \xrightarrow{\rho} t$ implies that $(t_m )_{m \in \omega}$ is convergent to $t$ in the usual sense on $2^\omega$, we observe also that $id:Z_g \to 2^\omega$ is continuous, i.e., $Z_g$ is continuously included in $2^\omega$. \\
Now we will prove that $Z_g$ is not an $F_{\sigma}$ set. Suppose on the contrary that $Z_g=\bigcup_{n \in \omega} F_n$ for some closed sets $F_n \subset 2^\omega, n \in \omega.$ Since $Z_g$ is continuously included in $2^\omega$, the sets $F_n$ are also closed in $Z_g$, thus $Z_g=\bigcup_{n \in \omega} F_n$ is a countable union of its closed subsets (with respect to $\rho$). Using the Baire Category Theorem, we get an existence of such $n_0 \in \omega$ that $F_{n_0}$ contains a ball $B(u,r)$ for some $u \in Z_g$ and $r>0$. 
Thanks to $(\ref{wr})$ we may fix such $N \in \omega$ that:
$$\frac{1}{g(n)} \sum_{i=0}^n u(i)<\frac{r}{4}$$
for all $n\geq N$.
Observe that if for some $n>N$ and $t(N+1),\ldots,t(n) \in \{0,1\}$ it is true that:
$$\frac{1}{g(n-N)} \sum_{i=N+1}^n t(i) <\frac{r}{4},$$
then the sequence $(w(j))_{j \in \omega}:=(u(0),\ldots,u(N),t(N+1),\ldots,t(n),0,\ldots)$
satisfies:
$$\frac{1}{g(n)} \sum_{i=0}^n |u(i)-w(i)|=\frac{1}{g(n)} \sum_{i=N+1}^n |u(i)-t(i)| $$
$$\leq \frac{1}{g(n)} \sum_{i=0}^n u(i)+\frac{1}{g(n)} \sum_{i=N+1}^n t(i)<\frac{r}{4}+\frac{1}{g(n-N)}\sum_{i=N+1}^n t(i)<\frac{r}{2},$$
since $g$ is nondecreasing. Now we define $N_i$ for $i\in \omega$ by: 
$$ 
N_0 = \on{min} \{ n > N : \frac{1}{g(n-N)} < \frac{r}{4} \} \mbox{ and } N_{i+1} = \on{min} \{ n > N_i : \frac{i+2}{g(n-N)} < \frac{r}{4} \} .
$$
Then clearly $\frac{i+1}{g(N_i-N)}<\frac{r}{4}$ for all $i \in \omega$ and by $n/g(n) \not\rightarrow 0$ we also get $\limsup_{i\to\infty} \frac{i+1}{g(N_i)}>0$. Indeed, suppose that $\lim_{i \to \infty} \frac{i+1}{g(N_i)}=0$ and note that inequality $N_{i}-N>N_{i-N}$ for each $i>N$ (since $N_{i-N}<N_{i-N+1}<\ldots<N_i$ and $N_k \in \omega$ for $k \in \omega$) implies $\frac{i+1}{g(N_i-N)} \leq \frac{i+1}{g(N_{i-N})}=\frac{i-N+1}{g(N_{i-N})}+\frac{N}{g(N_{i-N})}$ and finally $\lim_{i \to \infty} \frac{i+1}{g(N_i-N)}=0$. Now, take such $i_0 \in \omega$ that $\frac{i+1}{g(N_i-N)}<\frac{r}{8}$ for all $i \geq i_0$ (then also $\frac{1}{g(N_i - N)}< \frac{r}{8}$). Therefore,
$$\frac{i+2}{g(N_i+1-N)}=\frac{i+1}{g(N_i+1-N)}+\frac{1}{g(N_i+1-N)} \leq \frac{i+1}{g(N_i-N)}+\frac{1}{g(N_i-N)}<\frac{r}{8}+\frac{r}{8}=\frac{r}{4},$$ 
thus $N_{i+1} \leq N_i +1.$ However, this means that for $A:=\{ N_i : i \geq i_0\}$ we get: 
$$
\frac{\card(A \cap N_i)}{g(N_i)} \leq \frac{i}{g(N_i)} \to 0,
$$
and thus $A \in \Z_g$ -- a contradiction, since the set $\omega \setminus A$ is finite. \\
Now for any $i \in \omega$ we define $t_i \in \{0,1\}^{N_i - N}$ by $t_i := \chi_{\{N_k - N : k \leq i \}}$. Finally, we set:
$$w_i = (u(0), \ldots , u(N), t_i(0), \ldots, t_i(N_i - N - 1), 0, \ldots ).$$
\newline
Then for each $i \in \omega$ we get $w_i\in B(u,r)\subset F_{n_0}$. Since $F_{n_0}$ is closed in $2^\omega$, we obtain that $w:=\lim_{i \to \infty} w_i$ is a member of $F_{n_0} \subset Z_g.$ However, the sequence $(\frac{1}{g(n)}\sum_{j=0}^n w(j))_{n \in \omega}$ is not converging to $0$, because:
$$\frac{1}{g(N_i)} \sum_{j=0}^{N_i} w(j) \geq \frac{i}{g(N_i)}$$ and we know that $\limsup_{i\to\infty} \frac{i}{g(N_i)}>0$. Condition $(\ref{wr})$ shows that $w \notin Z_g$ -- a contradiction, since $w \in F_{n_0} \subset Z_g$. 
\end{proof}

\section{Generating the same ideal by various functions}\label{sekgenerowanieroznymifcjami}

Now we move to another question -- namely, how many functions $g \in G$ satisfy $\mathcal{Z}_g=\mathcal{Z}_f$ for a given $f \in G.$ In our first approach the answer is easy thanks to \cite[Proposition 2.1]{Den}:
\begin{prop}\label{rown}
Given $f,g \in G$, if there are such positive reals $m$ and $M$ that $m \leq \frac{f(n)}{g(n)} \leq M$ for all $n \in \omega$, then $\mathcal{Z}_f=\mathcal{Z}_g$. \qed
\end{prop}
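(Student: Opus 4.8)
The plan is to note that the two-sided bound on $f/g$ makes the sequences $\bigl(\card(A\cap n)/f(n)\bigr)_{n}$ and $\bigl(\card(A\cap n)/g(n)\bigr)_{n}$ comparable term by term, uniformly in $n$, so that one tends to $0$ exactly when the other does; this immediately yields both inclusions $\mathcal{Z}_f\subseteq\mathcal{Z}_g$ and $\mathcal{Z}_g\subseteq\mathcal{Z}_f$.

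First I would fix an arbitrary $A\subseteq\omega$. For every $n\in\omega$ we have $f(n),g(n)>0$ and $\card(A\cap n)\ge 0$, so we may rearrange the quotient as $\frac{\card(A\cap n)}{g(n)}=\frac{\card(A\cap n)}{f(n)}\cdot\frac{f(n)}{g(n)}\le M\cdot\frac{\card(A\cap n)}{f(n)}$, using the upper bound $f(n)/g(n)\le M$. Passing to $\limsup_{n\to\infty}$ and using that $\limsup$ is monotone and positively homogeneous gives $\overline{d_g}(A)\le M\,\overline{d_f}(A)$. Hence $\overline{d_f}(A)=0$ forces $\overline{d_g}(A)=0$, i.e. $A\in\mathcal{Z}_f$ implies $A\in\mathcal{Z}_g$.

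Then I would repeat the argument with the roles of $f$ and $g$ exchanged, using the other half of the hypothesis: from $m\le f(n)/g(n)$ one gets $\frac{\card(A\cap n)}{f(n)}\le\frac1m\cdot\frac{\card(A\cap n)}{g(n)}$ for all $n$, hence $\overline{d_f}(A)\le\frac1m\,\overline{d_g}(A)$, so $A\in\mathcal{Z}_g$ implies $A\in\mathcal{Z}_f$. Combining the two inclusions yields $\mathcal{Z}_f=\mathcal{Z}_g$. Equivalently, since the hypothesis says $f(n)\le Mg(n)$ and $g(n)\le\frac1m f(n)$ for every $n$, the conclusion also follows by applying the comparison statement \cite[Proposition 2.1]{Den} twice.

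There is essentially no obstacle here; the only point deserving a word of care is the harmless observation that, because $f$ and $g$ are strictly positive, splitting off the factor $\frac{f(n)}{g(n)}$ and bounding it by a constant is legitimate for every single $n$, which is exactly what lets the constants $M$ and $1/m$ pass through the $\limsup$.
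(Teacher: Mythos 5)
Your argument is correct and complete: the two-sided pointwise bound on $f/g$ gives $\overline{d_g}(A)\le M\,\overline{d_f}(A)$ and $\overline{d_f}(A)\le \frac{1}{m}\,\overline{d_g}(A)$, so the two upper densities vanish simultaneously. The paper itself offers no proof, deducing the statement directly from \cite[Proposition 2.1]{Den}, and your direct computation is exactly the comparison that citation encapsulates, so the approaches coincide.
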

As a result, for a given $f \in G$ we have $\mathcal{Z}_f=\mathcal{Z}_{af}$, where $a$ is any positive number. Hence, for a given $f \in G$ we obtain $\ce$ many functions which generate the same ideal. On the other hand, we have that $\card (G) \leq \card( \R^{\omega}) = \ce$, so the above result is optimal. However, this answer is a little bit of "cheating" -- as each $\Z_f$ is generated by some function from $\omega^\omega$, one could require that all considered functions have values in $\omega$. There are also other ways of "cheating", like considering functions which are not monotone. However, the family $\{ \lfloor \alpha f \rfloor : \alpha \in [1,2] \}$ allows us to omit all those issues as all of its members are in $H^\uparrow$ (recall that $H^\uparrow = \{f \in G : f \in \omega^\omega \mbox{ and } f \mbox{ is nondecreasing}\}$) and $\mathcal{Z}_f=\mathcal{Z}_{\lfloor \alpha f \rfloor}$ for all $\alpha \in [1,2]$. Thus, it seems that the proper question is connected with avoiding Proposition \ref{rown}. In order to ask it in precise way we introduce some notation. For $f\in H^\uparrow$ let $S_f=\left\{g\in H^\uparrow\ : \Z_f=\Z_g\right\}/R$, where $R$ is a equivalence relation given by:
$$gRh\ \Longleftrightarrow\ \limsup_{n\to\infty}\frac{g(n)}{h(n)}<\infty\ \wedge\ \limsup_{n\to\infty}\frac{h(n)}{g(n)}<\infty.$$ 
Now we are interested in the following question: given $f \in H^\uparrow$, what is the cardinality of the set $S_f$?

Our answer consists of two elements. Before we give them, we need two easy lemmata.
\begin{lem}\label{Michal}
For any $f \in H^\uparrow$ and $A = \{ a_0 <a_1 < \ldots\} \subset \omega$ the following equivalence is true: 
$$
A \in \Z_f \Longleftrightarrow \frac{\card(A \cap a_k)}{f(a_k)} \to 0.
$$
\end{lem}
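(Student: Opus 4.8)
The plan is to prove both implications directly from the definition of $\Z_f$, exploiting the monotonicity of $f$ to interpolate the quotient $\card(A\cap n)/f(n)$ between consecutive quotients evaluated at the elements of $A$. Write $A=\{a_0<a_1<\ldots\}$. The key observation is that for any $n\in\omega$ with $a_k < n\le a_{k+1}$ (for the appropriate $k$) we have $\card(A\cap n)=\card(A\cap a_{k+1})=k+1$ on the one hand, while $\card(A\cap n)\ge\card(A\cap(a_k+1))=k+1$ and $f(n)\ge f(a_k+1)$, so the value of $\card(A\cap n)/f(n)$ at an arbitrary $n$ is controlled by the values at points just above (or at) members of $A$. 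This reduces the $\limsup$ over all $n$ to a $\limsup$ along the (shifted) subsequence indexed by $A$.

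For the nontrivial direction ($\Leftarrow$), assume $\card(A\cap a_k)/f(a_k)\to 0$, i.e. $k/f(a_k)\to 0$; I want $\card(A\cap n)/f(n)\to 0$ for all $n$. Given $n$, let $k$ be largest with $a_k<n$ (if no such $k$, then $A\cap n=\emptyset$ and there is nothing to prove), so $\card(A\cap n)=k+1$. Since $a_k<n$ and $f$ is nondecreasing, $f(n)\ge f(a_k)$, hence
$$
\frac{\card(A\cap n)}{f(n)}=\frac{k+1}{f(n)}\le\frac{k+1}{f(a_k)}=\frac{\card(A\cap a_k)}{f(a_k)}+\frac{1}{f(a_k)}\to 0,
$$
where the last term goes to $0$ because $f\in G$ forces $f(a_k)\to\infty$ (as $a_k\to\infty$). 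As $n\to\infty$ we have $k\to\infty$, so the whole expression tends to $0$; thus $A\in\Z_f$.

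For the forward direction ($\Rightarrow$), assume $A\in\Z_f$, i.e. $\card(A\cap n)/f(n)\to 0$ as $n\to\infty$. Since $a_k\to\infty$, plugging $n=a_k$ into this convergence immediately gives $\card(A\cap a_k)/f(a_k)\to 0$ along the subsequence. I do not anticipate a real obstacle here; the only point requiring a little care is the bookkeeping of off-by-one issues in the definition of $\card(A\cap n)$ (recall $n$ is identified with $\{0,\ldots,n-1\}$, so $a_k\notin a_k$ and $\card(A\cap a_k)=k$), which is why I phrase the key inequality using $a_k<n$ rather than $a_k\le n$. The use of $f\in G$ (so that $f(n)\to\infty$) to kill the extra $1/f(a_k)$ term is the one place where we genuinely need to be in $H^\uparrow$ rather than merely dealing with an arbitrary nondecreasing function.
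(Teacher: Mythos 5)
Your proof is correct and follows essentially the same route as the paper's: the forward direction is the trivial restriction to the subsequence $n=a_k$, and the reverse direction locates $n$ in the interval between consecutive elements of $A$, uses monotonicity of $f$ to bound $f(n)\geq f(a_k)$, and absorbs the off-by-one term $1/f(a_k)$ using $f(a_k)\to\infty$. The only cosmetic difference is that the paper brackets $n\in[a_k,a_{k+1})$ and bounds $\card(A\cap n)\leq\card(A\cap a_k)+1$, whereas you bracket $n\in(a_k,a_{k+1}]$ and compute the cardinality exactly; the estimates are identical.
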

\begin{proof}
Implication from left to right is obvious, so we will focus on the second one. Fix $A = \{ a_0 <a_1 < \ldots\} \subset \omega$ and take $n \in \omega$ with $n\geq a_0$. Then $n \in [a_k , a_{k+1}) $ for exactly one $k \in \omega$ and thus:
$$
\frac{\card(A \cap n)}{f(n)} \leq \frac{\card(A \cap a_{k+1})}{f(a_k)} =
\frac{\card(A \cap a_{k})+1}{f(a_k)} \to 0,
$$
so we are done. 
\end{proof}
\begin{lem} \label{max}
Suppose that $\Z_f=\Z_g$ for some $f,g \in H^{\uparrow}$. Then $\Z_{\max \{f,g\}}=\Z_f$.
\end{lem}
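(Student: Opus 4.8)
The plan is to show that $\Z_{\max\{f,g\}} = \Z_f \cap \Z_g$ in general, and then use the hypothesis $\Z_f = \Z_g$ to conclude. The inclusion $\Z_{\max\{f,g\}} \subseteq \Z_f \cap \Z_g$ is immediate: if $A \in \Z_{\max\{f,g\}}$, then since $f(n) \le \max\{f(n),g(n)\}$ we have $\frac{\card(A\cap n)}{f(n)} \ge \frac{\card(A\cap n)}{\max\{f,g\}(n)}$, so this direction does not even give what we want — we need the reverse comparison. Let me reconsider: in fact $\max\{f,g\}(n) \ge f(n)$ means $\frac{\card(A\cap n)}{\max\{f,g\}(n)} \le \frac{\card(A\cap n)}{f(n)}$, hence $\limsup \frac{\card(A\cap n)}{f(n)} = 0$ forces $\limsup \frac{\card(A\cap n)}{\max\{f,g\}(n)} = 0$. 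So $\Z_f \subseteq \Z_{\max\{f,g\}}$ and symmetrically $\Z_g \subseteq \Z_{\max\{f,g\}}$; combined with $\Z_f = \Z_g$ this already gives $\Z_f \subseteq \Z_{\max\{f,g\}}$.

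So the real content is the reverse inclusion $\Z_{\max\{f,g\}} \subseteq \Z_f$. First I would check that $\max\{f,g\} \in H^\uparrow$: it is integer-valued, nondecreasing (pointwise maximum of nondecreasing functions), tends to infinity, and satisfies $\frac{n}{\max\{f,g\}(n)} \nrightarrow 0$ because $\frac{n}{\max\{f,g\}(n)} \le \frac{n}{f(n)}$ and $\frac{n}{f(n)} \nrightarrow 0$ — wait, that inequality goes the wrong way for a $\limsup$ lower bound; instead use $\frac{n}{\max\{f,g\}(n)} = \min\{\frac{n}{f(n)}, \frac{n}{g(n)}\}$, and since $f R'$-style control is unclear, I would instead argue directly that $\max\{f,g\}$ generates an ideal by the known fact that $\Z_h$ is an ideal whenever $h \in G$, after verifying $\max\{f,g\} \in G$. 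The cleanest route: since $\Z_f = \Z_g$, Proposition~\ref{rown}'s converse is false in general, so I cannot assume $f,g$ are within bounded ratio; instead I take $A \in \Z_{\max\{f,g\}}$ and must show $\frac{\card(A\cap n)}{f(n)} \to 0$.

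Here is the key step. Write $A = \{a_0 < a_1 < \cdots\}$; by Lemma~\ref{Michal} it suffices to show $\frac{\card(A\cap a_k)}{f(a_k)} = \frac{k}{f(a_k)} \to 0$ (using $\card(A \cap a_k) = k$), and similarly $A \in \Z_{\max\{f,g\}}$ is equivalent, again by Lemma~\ref{Michal} applied to $\max\{f,g\}$, to $\frac{k}{\max\{f(a_k), g(a_k)\}} \to 0$. So we know $\frac{k}{\max\{f(a_k),g(a_k)\}} \to 0$ and want $\frac{k}{f(a_k)} \to 0$. Split the indices: on the set $K_f = \{k : f(a_k) \ge g(a_k)\}$ we have $\max\{f(a_k),g(a_k)\} = f(a_k)$, so $\frac{k}{f(a_k)} \to 0$ along $K_f$ trivially. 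On $K_g = \{k : g(a_k) > f(a_k)\}$ we have $\frac{k}{g(a_k)} \to 0$ along $K_g$, so the set $A' = \{a_k : k \in K_g\}$ has the property that its "diagonal density" with respect to $g$ tends to $0$ along a cofinal set of its own enumeration — more precisely $\frac{\card(A' \cap a_k)}{g(a_k)} \le \frac{k}{g(a_k)} \to 0$ along $K_g$, and by Lemma~\ref{Michal} (checking the full limit, not just a subsequence — this needs a small monotonicity argument) one gets $A' \in \Z_g = \Z_f$. Then $A' \in \Z_f$ means $\frac{\card(A'\cap n)}{f(n)} \to 0$, in particular $\frac{\card(A' \cap a_k)}{f(a_k)} \to 0$ along all $k$, and for $k \in K_g$ this dominates $\frac{\card(A'\cap a_k)}{f(a_k)}$ which I must relate back to $\frac{k}{f(a_k)}$.

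The main obstacle I anticipate is exactly this last bookkeeping: $\card(A' \cap a_k)$ for $k \in K_g$ counts only the indices in $K_g$ below $k$, which may be far fewer than $k$, so $\frac{\card(A'\cap a_k)}{f(a_k)} \to 0$ does not by itself control $\frac{k}{f(a_k)}$. The fix is to run the symmetric argument simultaneously: also form $A'' = \{a_k : k \in K_f\}$, note $A'' \cup A' = A$ up to finitely many points, and observe that on $K_f$ we already have $\frac{k}{f(a_k)} \to 0$ for free while on $K_g$ we bound $\card(A\cap a_k) = k \le \card(A\cap a_k)$ and use $A \in \Z_g = \Z_f$ directly — indeed the slickest version avoids the splitting entirely: since $\frac{k}{\max\{f(a_k),g(a_k)\}}\to 0$ gives both $\frac{k}{f(a_k)}\to 0$ *and* $\frac{k}{g(a_k)}\to 0$ fail only where the respective function is the smaller one, but $\min$ controls neither. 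So the honest approach is: from $A \in \Z_{\max\{f,g\}}$ deduce $A \in \Z_g$ is false in general — hence one genuinely needs $\Z_f = \Z_g$, applied as follows. Since $\max\{f,g\} \ge g$, we get $\Z_{\max\{f,g\}} \subseteq \Z_{g}$? No — again the inequality forces $\Z_g \subseteq \Z_{\max\{f,g\}}$, the wrong way. I therefore expect the real proof (and the one I would write) to go: take $A\in\Z_{\max\{f,g\}}$, decompose $A = A_f \sqcup A_g$ where $A_f = \{n\in A : f(n)\ge g(n)\}$ and $A_g = A\setminus A_f$; show $A_f\in\Z_f$ because on $A_f$, $\frac{\card(A\cap n)}{f(n)}\le\frac{\card(A\cap n)}{\max\{f,g\}(n)}\to 0$ — but wait, that's $\card(A\cap n)$ not $\card(A_f\cap n)$, and it's evaluated at $n\in A_f$, so via Lemma~\ref{Michal} for $A_f$ we need the bound at $A_f$'s own enumeration, which is weaker, so this works: $A_f \in \Z_f$; symmetrically $A_g \in \Z_g = \Z_f$; hence $A = A_f \cup A_g \in \Z_f$ since $\Z_f$ is an ideal. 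The delicate point is justifying the passage through Lemma~\ref{Michal} when we only control the density along a sub-enumeration, which should follow from the monotonicity of $f$ exactly as in the proof of Lemma~\ref{Michal} itself.
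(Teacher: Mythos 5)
Your final argument --- decomposing $A\in\Z_{\max\{f,g\}}$ into $A_f=\{n\in A: f(n)\ge g(n)\}$ and $A_g=A\setminus A_f$, applying Lemma \ref{Michal} to each piece along its own enumeration (where $\max\{f,g\}$ coincides with $f$, resp.\ $g$), invoking $\Z_g=\Z_f$ for $A_g$, and taking the union --- is exactly the paper's proof, and it is correct; the ``delicate point'' you flag is precisely what Lemma \ref{Michal} already delivers, so there is no gap. The earlier false starts (trying $\Z_{\max\{f,g\}}=\Z_f\cap\Z_g$, reversed inequalities) are all abandoned before the end, so they do not affect the validity of the final version.
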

\begin{proof}
Take any $A \subset \omega$. The inequality $\frac{\card(A \cap n)}{\max \{f(n),g(n)\}} \leq \frac{\card(A \cap n)}{f(n)}$ shows that $\Z_f \subset \Z_{\max \{f,g\}}.$ Now suppose that $A \in \Z_{\max \{f,g\}}$ and denote $A_1:=\{n \in A \colon g(n) \leq f(n)\}=\{a_0<a_1< \ldots\}$ (the case with $A_1$ finite is easier) and $A_2:=A \setminus A_1.$ 
Note that $A_1,A_2 \in \Z_{\max \{f,g\}}.$ Then $\frac{\card(A_1 \cap a_k)}{f(a_k)}=\frac{\card(A_1 \cap a_k)}{\max \{f(a_k),g(a_k)\}} \to 0$, and $A_1 \in \Z_f$, thanks to Lemma \ref{Michal}. Similarly, $A_2 \in \Z_g=\Z_f$ and consequently $A=A_1 \cup A_2 \in \Z_f$.
\end{proof}
The first ingredient of our answer to the question about cardinality of $S_f$ is the following Proposition:
\begin{prop}\label{moce2}
Let $f\in H^\uparrow$. If $\card(S_f)> 1$, then $\card(S_f)=\ce$.
\end{prop}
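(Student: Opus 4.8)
The plan is to show that if there exists even one function $g\in H^\uparrow$ with $\Z_g=\Z_f$ but $g\not\mathrel{R} f$, then we can manufacture continuum-many pairwise $R$-inequivalent functions all generating $\Z_f$. The natural source of such functions is \emph{interpolation between $f$ and $g$}: for a set $B\subseteq\omega$ define
$$
h_B(n) = \begin{cases} f(n) & n\in B,\\ g(n) & n\notin B,\end{cases}
$$
or, to stay inside $H^\uparrow$, one takes $h_B=\min\{f,g\}$ on $B$ and $h_B=\max\{f,g\}$ off $B$ (and then replaces $h_B$ by its running maximum $n\mapsto\max_{k\le n}h_B(k)$ to restore monotonicity, which does not change the ideal by Proposition \ref{rown} applied locally, or by a direct check). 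First I would verify that each such $h_B$ lies in $H^\uparrow$ and, crucially, that $\Z_{h_B}=\Z_f$: since $\min\{f,g\}\le h_B\le\max\{f,g\}$ pointwise, we get $\Z_{\min\{f,g\}}\subseteq\Z_{h_B}\subseteq\Z_{\max\{f,g\}}$, and by Lemma \ref{max} (applied to $f$ and to $g$, using $\Z_f=\Z_g$) both the outer ideals equal $\Z_f$. So every $h_B$ generates $\Z_f$, giving us a large supply of candidates inside the single $R$-class-count we are trying to bound from below.

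Next I would extract from the hypothesis $g\not\mathrel{R} f$ a concrete ``gap'': without loss of generality $\limsup_n g(n)/f(n)=\infty$, so there is an increasing sequence $(n_k)$ with $g(n_k)/f(n_k)\to\infty$; passing to a subsequence I can make the growth along $(n_k)$ as fast as I like. The idea is then to index the constructions by an almost disjoint (or, more simply, by a suitably independent) family of subsets of $\{n_k:k\in\omega\}$. Concretely, partition $\{n_k\}$ into infinitely many infinite blocks $I_0,I_1,\ldots$, and for each $X\subseteq\omega$ let $B_X=\bigcup_{j\in X}I_j$ and $h_X:=h_{\omega\setminus B_X}$, i.e.\ $h_X$ agrees with the larger function $g$ precisely on the blocks indexed by $X$ and with $f$ elsewhere along the gap sequence (and $h_X=f$ off $\{n_k\}$, say). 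For $X\neq Y$ pick $j$ in the symmetric difference, say $j\in X\setminus Y$; then along $I_j$ the ratio $h_X/h_Y$ behaves like $g/f$, which is unbounded, so $\limsup h_X(n)/h_Y(n)=\infty$ and $h_X\not\mathrel{R} h_Y$. This yields $2^{\aleph_0}=\ce$ pairwise $R$-inequivalent functions in $S_f$, and since trivially $\card(S_f)\le\card(H^\uparrow)\le\ce$, we conclude $\card(S_f)=\ce$.

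The main obstacle I anticipate is not the cardinality bookkeeping but keeping the interpolated functions simultaneously (i) nondecreasing and integer-valued, (ii) still in $G$ (the condition $n/h(n)\nrightarrow 0$ must survive — but this is automatic since $h_X\le\max\{f,g\}$ and one checks $n/\max\{f,g\}(n)\nrightarrow0$ fails only if... actually we need the lower bound, so use $h_X\ge\min\{f,g\}=f$ infinitely often along the $f$-part, or better: $h_X\ge f$ off the gap and $n/f(n)\nrightarrow0$), and (iii) genuinely generating $\Z_f$ rather than some larger ideal. Point (iii) is handled cleanly by the $\min$/$\max$ sandwich together with Lemma \ref{max}; the monotonization in (i) is the fussy part, and I would deal with it either by choosing the blocks $I_j$ sparse enough that $f$ and $g$ have already ``caught up'' between blocks, or by the running-maximum trick with a short lemma that replacing $h$ by $\bar h(n)=\max_{k\le n}h(k)$ preserves $\Z_h$ whenever $h\in G$ — this last fact follows from $h\le\bar h$ and an argument like Lemma \ref{Michal}. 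Once these technical points are in place, the rest is the dichotomy argument outlined above.
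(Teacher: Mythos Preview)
Your approach is essentially the paper's: interpolate between $f$ and $g$, sandwich via Lemma \ref{max}, and index by continuum-many parameters (the paper uses an almost-disjoint family in $2^\omega$ where you use a block partition, and it handles your monotonicity worry precisely by your ``sparse blocks'' idea, imposing $f(n_{k+1})\ge g(n_k)$ and setting $f_\alpha=\max\{g(n_k),f\}$ on each $[n_k,n_{k+1})$, which is the running maximum written out explicitly). One small correction: Lemma \ref{max} only yields $\Z_{\max\{f,g\}}=\Z_f$, not the dual statement for $\min\{f,g\}$; simply replace $g$ by $\max\{f,g\}$ at the outset (as the paper in effect does), so that $f\le h_X\le g$ pointwise and the sandwich needs only the trivial inclusion together with Lemma \ref{max}.
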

\begin{proof}
Suppose we have such $g\in S_f$ that $f\not=g$ and let us denote $a:= \limsup_{n\to\infty} \frac{n}{f(n)} >0$. Without loss of generality, we may assume that $\limsup_{n \to \infty} \frac{g(n)}{f(n)} = \infty$, thus there exists such a sequence $(n_k)_{k\in\omega}$ that $\lim_{k\to\infty}\frac{g(n_k)}{f(n_k)}=\infty$. By an easy inductive construction, we may also assure that for all $k\in\omega$ we have $f(n_{k+1})\geq g(n_k).$ Now, for each $\alpha=(\alpha_1,\alpha_2,\ldots)\in 2^\omega$ we define a function $f_\alpha$ as follows:
$$f_\alpha(n)=\left\{\begin{array}{ll}
f(n) & \textrm{if } n<n_0,\\
f(n) & \textrm{if  } n\in[n_k,n_{k+1}) \textrm{ and } \alpha_k=0,\\
\max\{g(n_k),f(n) \} & \textrm{if } n\in[n_k,n_{k+1}) \textrm{ and } \alpha_k=1.
\end{array}\right.$$
It is easy to see that for all $\alpha$ we have $\Z_{f_\alpha}=\Z_f$, because $f_\alpha(n)\in [ f(n), \max \{ f(n),g(n) \}]$ for all natural $n$ and $\Z_f=\Z_{\max \{f,g\}}$ (see Lemma \ref{max}). In particular, $f_{\alpha} \in H^{\uparrow}.$ 
Notice also that 
whenever we have such $\alpha,\beta\in 2^\omega$ that $\alpha_n>\beta_n$ for infinitely many $n\in\omega$, then  $\limsup_{n\to\infty}\frac{f_\alpha(n)}{f_\beta(n)}=\infty$ and when $\alpha_n<\beta_n$ for infinitely many $n\in\omega$, then $\limsup_{n\to\infty}\frac{f_\beta(n)}{f_\alpha(n)}=\infty$. 
It is known that there is such a family $\A \subset 2^\omega$ that $\card(\A)=\ce$ and for all $\alpha,\beta\in \A$ we have $\card(\{n\in\omega: \alpha_n=\beta_n \})<\omega$ (e.g., characteristic functions of elements of any maximal almost disjoint family satisfy those conditions). Thus, $\card(S_f)=\ce$.
\end{proof}
Now we know that there are only two possibilities: $\card( S_f )= 1$ or $\card (S_f) = \ce$. The next Theorem provides us with a condition equivalent to the first possibility. Recall that the expression "for almost all $n\in\omega$" means "for all but finitely many $n\in\omega$".
\begin{thm}
\label{moce}
For any $f \in H^\uparrow$, the condition $\card(S_f)=1$ holds if and only if there are such $M>0$ and $\varepsilon>0$ that for almost all $n\in\omega$ we have: 
$$
\frac{f(n+\lfloor\varepsilon f(n)\rfloor)}{f(n)}\leq M.
$$
\end{thm}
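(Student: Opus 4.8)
My plan rests on one structural fact about $f$: the displayed inequality says that $f$ cannot grow too abruptly, and I would first isolate this in a usable form. \emph{Regularity lemma.} The condition implies that there are $C>0$ and $n_1$ with $f(m)\le C\max\{f(n),\,m-n\}$ for all $m\ge n\ge n_1$; in particular $f(m)=O(m)$. Indeed, fix the witnessing $M,\varepsilon$, take $n_1$ with $f(n_1)\ge 2M/\varepsilon$, and for $n\ge n_1$ iterate by setting $T_0=n$, $T_{i+1}=T_i+\lfloor\varepsilon f(T_i)\rfloor$. Then $f(T_{i+1})\le Mf(T_i)$, the $T_i$ increase to infinity, and $f(T_{i-1})\ge f(T_i)/M$ forces $T_i-T_{i-1}=\lfloor\varepsilon f(T_{i-1})\rfloor\ge\tfrac{\varepsilon}{2M}f(T_i)$, whence $f(T_I)\le\tfrac{2M}{\varepsilon}(T_I-n)$ for $I\ge 1$. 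For $m\ge n$: if $m<T_1$ then $f(m)\le Mf(n)$; otherwise, taking the largest $I$ with $T_I\le m$, $f(m)\le f(T_{I+1})\le Mf(T_I)\le\tfrac{2M^2}{\varepsilon}(m-n)$. This gives the lemma.

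\emph{The implication $\Leftarrow$.} Assume the condition, fix $g\in H^\uparrow$ with $\Z_g=\Z_f$, and suppose $\neg(gRf)$. If $\limsup_n g(n)/f(n)=\infty$, pick a sparse $(n_k)$ with $g(n_k)/f(n_k)\ge(k+1)^2$, put $L_k=\lfloor g(n_k)/(k+1)\rfloor$, and $A=\bigcup_k[n_k,n_k+L_k)$, choosing the $n_k$ large enough that $n_{k+1}>n_k+L_k$ and $\sum_{j<k}L_j\le g(n_k)/(k+1)$. On $[n_k,n_{k+1})$, $\card(A\cap n)\le\sum_{j\le k}L_j\le\tfrac{2}{k+1}g(n_k)\le\tfrac{2}{k+1}g(n)$, so $A\in\Z_g$; but $L_k\ge f(n_k)$, so the regularity lemma gives $f(n_k+L_k)\le CL_k$, and since $\card(A\cap(n_k+L_k))\ge L_k$ we get $\overline{d_f}(A)\ge 1/C$, i.e. $A\notin\Z_f$ --- contradicting $\Z_g=\Z_f$. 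If instead $\limsup_n f(n)/g(n)=\infty$, pick $(n_k)$ with $f(n_k)/g(n_k)\ge k+1$; since $f(n_k)=O(n_k)$ this forces $g(n_k)=o(n_k)$, so $L_k=\lfloor g(n_k)\rfloor+1$ satisfies $n_k-L_k>n_{k-1}$ for a sparse enough choice, and we set $A=\bigcup_k[n_k-L_k,n_k)$. Then $\card(A\cap n_k)\ge L_k>g(n_k)$, so $A\notin\Z_g$; on the other hand the regularity lemma gives $f(n_k)\le C\max\{f(n_k-L_k),L_k\}$, where the alternative $f(n_k)\le CL_k\le 2Cg(n_k)$ contradicts $f(n_k)/g(n_k)\to\infty$, so $f(n_k-L_k)\ge f(n_k)/C$, and arranging $\sum_{j\le k}L_j=o(f(n_k))$ we obtain $\card(A\cap n)/f(n)\le C\sum_{j\le k}L_j/f(n_k)\to 0$, so $A\in\Z_f$ --- again contradicting $\Z_g=\Z_f$. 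Hence $gRf$ for every such $g$, so $\card(S_f)=1$.

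\emph{The implication $\Rightarrow$ (contrapositive).} Suppose the condition fails. For each $j$ the set $\{n:f(n+\lfloor f(n)/j\rfloor)>jf(n)\}$ is infinite and contained in $\{n:f(n)\ge j\}$, so choose $n_1<n_2<\dots$ (sparse, with $n_{j+1}>n_j+L_j$) with $L_j:=\lfloor f(n_j)/j\rfloor\ge 1$ and $f(n_j+L_j)>jf(n_j)$. Let $q_j:=\min\{m>n_j:f(m)>\sqrt j\,f(n_j)\}$; then $n_j<q_j\le n_j+L_j$ and $f(q_j-1)\le\sqrt j\,f(n_j)$. Define $g$ to equal $f$ everywhere except on the pairwise disjoint blocks $[q_j-1,\,n_j+L_j]$, where $g$ takes the constant value $f(q_j-1)$. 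Then $g\in H^\uparrow$, $g\le f$ (so $\Z_g\subseteq\Z_f$ and $\limsup g/f<\infty$), and $f(n_j+L_j)/g(n_j+L_j)=f(n_j+L_j)/f(q_j-1)>\sqrt j\to\infty$, so $\neg(gRf)$. Finally $\Z_f\subseteq\Z_g$: if some $A\in\Z_f$ had $\overline{d_g}(A)=:2c>0$, pick $m_i\to\infty$ with $\card(A\cap m_i)>c\,g(m_i)$; for large $i$ necessarily $g(m_i)<f(m_i)$, so $m_i$ lies in a block $[q_j-1,n_j+L_j]$ with $j=j(i)\to\infty$ and $g(m_i)=f(q_j-1)$. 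Since $\card(A\cap(q_j-1))=o(f(q_j-1))$, for large $j$ we get $\card(A\cap[q_j-1,m_i))>\tfrac c2 f(q_j-1)$; but this is at most $m_i-(q_j-1)\le L_j\le f(n_j)/j\le f(q_j-1)/j$, forcing $j<2/c$ --- a contradiction for large $j$. Hence $\Z_g=\Z_f$, so $\card(S_f)\ge 2$, and Proposition \ref{moce2} gives $\card(S_f)=\ce\ne 1$.

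\emph{Where the difficulty lies.} The crux is the regularity lemma together with the observation that a block of consecutive integers whose length is $\gg f$ at its base has its $f$-counting function forced (by the lemma) to be comparable to that length; this is exactly what kills membership in $\Z_f$ in the first case of $\Leftarrow$ and secures it in the second --- and in the second case the block must be right-anchored, so fitting it below $n_k$ genuinely uses $f(m)=O(m)$. In the $\Rightarrow$ direction the delicate calibration is the threshold $\sqrt j\,f(n_j)$ defining $q_j$: it must be small enough relative to $jf(n_j)$ that $f/g$ still blows up, yet large enough that the interval on which $g$ is depressed below $f$ has length only $O(L_j)=o(f(n_j))$, which is precisely what keeps $\Z_g=\Z_f$.
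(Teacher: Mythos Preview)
Your proof is correct and takes a genuinely different route from the paper's.

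The main novelty is your \emph{regularity lemma}: from the growth condition you extract the uniform bound $f(m)\le C\max\{f(n),m-n\}$ for $m\ge n\ge n_1$ (and in particular $f(m)=O(m)$). The paper never isolates this; instead it works directly with the hypothesis at each step. Your lemma pays off in the $(\Leftarrow)$ direction, where the paper's two cases are quite asymmetric: its Case~1 ($\limsup g/f=\infty$) is short, but its Case~2 ($\limsup f/g=\infty$) requires an intricate double-sequence construction $(n_k),(m_k)$ with a delicate existence argument for $n_k$ squeezed so that $n_k+\varepsilon f(n_k)/k<m_k<n_k+\varepsilon f(n_k)$. Your treatment of Case~2 is much cleaner: because $f(m)=O(m)$ you know $g(n_k)=o(n_k)$, so you can right-anchor a block of length $g(n_k)+1$ below $n_k$; the regularity lemma then forces $f(n_k-L_k)\gtrsim f(n_k)$, and membership in $\Z_f$ follows. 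This makes the two cases nearly symmetric.

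In the $(\Rightarrow)$ direction the constructions go in opposite directions: the paper builds $g\ge f$ by \emph{raising} $f$ on the jump-intervals $I_k=[n_k,n_k+\lfloor f(n_k)/2^k\rfloor]$ to the value $f$ takes at the right endpoint, and then shows $\bigcup_k I_k\in\Z_f$ so the modification is harmless; you build $g\le f$ by \emph{freezing} $f$ on $[q_j-1,n_j+L_j]$ at the value $f(q_j-1)$, with the threshold $\sqrt{j}\,f(n_j)$ chosen so that $f/g$ still blows up while the frozen block has length $\le L_j=o(f(q_j-1))$, which is exactly what keeps $\Z_f\subseteq\Z_g$. Both devices work; yours has the minor advantage that $\Z_g\subseteq\Z_f$ is automatic from $g\le f$.

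Two cosmetic points: your sparseness condition $n_{j+1}>n_j+L_j$ in $(\Rightarrow)$ should be strengthened slightly (e.g.\ to $n_{j+1}>n_j+L_j+1$) so the frozen blocks are genuinely disjoint; and in Case~2 of $(\Leftarrow)$ the claim $\sum_{j\le k}L_j=o(f(n_k))$ deserves one explicit line (split off $L_k\le f(n_k)/(k+1)+1$ and choose $n_k$ so that $\sum_{j<k}L_j<f(n_k)/k$). Neither affects the argument.
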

\begin{proof}
($\Rightarrow$):
Suppose that for every $M>0$ and $\varepsilon>0$ there are infinitely many such $n\in\omega$ that $f(n+\lfloor\varepsilon f(n)\rfloor)/f(n)>M$. We will show that $\card(S_f)\geq 2$. 

We find such a sequence $(n_k)_{k\in\omega}$ that $f(n_k+\lfloor f(n_k)/2^k\rfloor)/f(n_k)>k$, $n_{k+1} > n_k + f(n_k )/2^k$. Then $\lfloor  \frac{f(n_k)}{2^k} \rfloor \leq \frac{f(n_k)}{2^k}<n_{k+1}-n_k$ and consequently $\frac{f(n_{k+1})}{f(n_k)}=\frac{f(n_k+n_{k+1}-n_k)}{f(n_k)} \geq f(n_k+\lfloor f(n_k)/2^k\rfloor)/f(n_k)>k \geq 2$. Denote as $I_k$ the intervals $[n_k,n_k+\lfloor f(n_k)/2^k\rfloor]$. Then we define a function $g\in H^\uparrow$ given by:
$$g(n)=\left\{\begin{array}{ll}
f(n_k+\lfloor f(n_k)/2^k\rfloor) & \textrm{if } n\in I_k \textrm{ for some } k\in\omega,\\
f(n) & \textrm{otherwise}.
\end{array}\right.$$
We can easily see that $\limsup_{n\to\infty} \frac{g(n)}{f(n)}=\infty$ since $\frac{g(n_k)}{f(n_k)}>k$ for all $k\in\omega$. We can also notice that $\Z_f\subseteq\Z_g$, because $g(n)\geq f(n)$ for all natural $n$. 

To prove that $\Z_g\subseteq\Z_f$, take any $A\in \Z_g$. We will show that $A\in \Z_f$.  Clearly, $\card(A\cap n)/g(n)=\card(A\cap n)/f(n)$ when $n$ is not in any $I_k$. Thus, by the monotonicity of $f$, we have $A\setminus \bigcup_{k\in\omega}I_k\in \Z_f$. We may also observe that $\bigcup_{k\in\omega}I_k\in \Z_f$. Indeed, take any $n \in \omega$ and such $k_0 \in \omega$ that $n \in [n_{k_0},  n_{k_0 +1})$. Then: 
$$
\frac{\card(\bigcup_{k \in \omega} I_k \cap n)}{f(n)} \leq \frac{\sum_{k \leq k_0} \card( I_k )}{f(n_{k_0} )} \leq \frac{k_0 \card( I_{k_0}) }{f(n_{k_0})}  
$$
$$\leq (\lfloor \frac{f(n_{k_0})}{2^{k_0}} \rfloor +1 )\frac{k_0}{f(n_{k_0})} < 2\frac{f(n_{k_0})}{2^{k_0}} \frac{k_0}{f(n_{k_0})} = \frac{k_0}{2^{k_0-1}} \xrightarrow{k_0 \to \infty} 0 ,
$$
as $1 <  \lfloor \frac{f(n_{k_0})}{2^{k_0}} \rfloor$ by $f(n_k+\lfloor f(n_k)/2^k\rfloor)/f(n_k)>k$ (otherwise $f(n_k+\lfloor f(n_k)/2^k\rfloor)/f(n_k)=1$) and $\card(I_{k+1}) \geq \card(I_k)$ by $f(n_k ) /2^k < f(n_{k+1})/2^{k+1}$. Therefore, $A\subseteq \left(A\setminus \bigcup_{k\in\omega}I_k\right) \cup \left(\bigcup_{k\in\omega}I_k \right) \in \Z_f$, hence $A\in\Z_f$.

($\Leftarrow$):  Suppose to the contrary that $f$ satisfies our condition for some $M, \varepsilon$ and there exists such a function $g$ not $R$-equivalent to $f$ that $g\in S_f$. Then there are two possibilities: either $\limsup_{n\to\infty}g(n)/f(n)=\infty$ or $\limsup_{n\to\infty}f(n)/g(n)=\infty$. 

Assume first that for every $m>0$ there are infinitely many such $n\in\omega$ that $\frac{g(n)}{f(n)}>m$. Then we can find such a sequence $(n_k)_{k\in\omega}$ of natural numbers that $g(n_k)/f(n_k)>k$ and $n_{k+1}\geq \varepsilon f(n_k)+n_k$ for all $k\in \omega$. Next, we construct a set $A$ in such a way that $\card \left( A\cap (n_k+\lfloor\varepsilon f(n_k)\rfloor) \right) =\lfloor \varepsilon f(n_{k})\rfloor$ for each $k\in\omega$ and $A \subset \bigcup_{k \in \omega} [n_k , \lfloor \varepsilon f(n_k ) \rfloor + n_k]$. Then, $A\not\in \Z_f$, because
$$
\frac{\card\left(A\cap (n_k + \lfloor \varepsilon f(n_k) \rfloor )\right)}{f(n_k + \lfloor \varepsilon f(n_k) \rfloor)} = \frac{\lfloor \varepsilon f(n_k) \rfloor }{f(n_k + \lfloor \varepsilon f(n_k) \rfloor)} \geq \frac{ \varepsilon f(n_k)  - 1}{f(n_k + \lfloor \varepsilon f(n_k) \rfloor)} > \frac{\varepsilon }{2 M} >0
$$
for big enough $k$ as $f(n)/(f(n + \lfloor \varepsilon f(n) \rfloor)) \geq 1/M$ for almost all $n \in \omega$ and $1/(f(n_k + \lfloor \varepsilon f(n_k) \rfloor)) \to 0$. On the other hand, for each $n \in \omega$ we may fix such $k$ that $n \in [n_k , n_{k+1})$, and then:
$$
\frac{\card(A \cap n)}{g(n)} \leq 
\frac{\card(A \cap n_{k+1})}{g(n_k)} \leq
\frac{\card(A\cap (n_k + \lfloor \varepsilon f(n_k) \rfloor ) +1}{g(n_k)} = 
$$
$$
=\frac{\lfloor \varepsilon f(n_k) \rfloor +1  }{g(n_k)} \leq 
\frac{\varepsilon f(n_k )}{g(n_k)} + \frac{1}{g(n_k)} \leq
\frac{\varepsilon}{k} + \frac{1}{g(n_k)} \to 0,
$$
thus $A \in \Z_g$. Therefore, $\Z_f \neq \Z_g$ -- a contradiction. \\
Assume now that for every $m>0$ there are infinitely many such natural $n$ that $\frac{f(n)}{g(n)}>m$. This case is more complicated. We start by observing that, without loss of generality, we may assume that $f(n) > \max\{ (\frac{\varepsilon}{M} - \frac{\varepsilon}{\lfloor M \rfloor +1})^{-1} , 1/\varepsilon \}$ for all natural $n$ (by replacing $f(n)$ with $\max\{ 1 + \lceil (\frac{\varepsilon}{M} - \frac{\varepsilon}{\lfloor M \rfloor +1})^{-1} \rceil , \lceil 1+ 1/\varepsilon \rceil, f( n ) \}$ for each $n \in \omega$). \\
%l= m_{k-1} +1 \\
Now we will construct two sequences: $(n_k)_{k\in\omega}$ and $(m_k)_{k\in\omega}$. Let $m_{0}=n_0=0$. If $m_i$ are defined for all $i\leq k$, then we pick such $m_{k+1}$ that $m_{k} + 1 + \varepsilon f(m_{k}+1)<m_{k+1}$ and $f(m_{k+1})/g(m_{k+1})>k+1$ (observe that $m_{k+1}>m_k$). As we have defined the sequence $(m_k)$, we move to the other one. For each $k \leq M$ we set $n_k = 0$, while for $k>M$ there is such $n_k$ that: 
$$n_k+\frac{\varepsilon f(n_k)}{k}<m_k<n_k+\varepsilon f(n_k).$$
Indeed, if this is not the case, then there must be such $n > m_{k-1}$ that: 
$$
n+\varepsilon f(n)\leq m_k\leq (n+1)+\frac{\varepsilon f(n+1)}{k},
$$
hence we obtain 
\begin{equation}\label{eqcard1}
f(n)\varepsilon\leq 1+\frac{\varepsilon f(n+1)}{k}.
\end{equation}
On the other hand, by $f(n)>1/\varepsilon$ and monotonicity of $f$, we get 
\begin{equation}\label{eqcard2}
\frac{f(n+1)}{f(n)}\leq \frac{f(n+\lfloor\varepsilon f(n)\rfloor)}{f(n)}\leq M \mbox{, thus } \frac{f(n+1)}{M} \leq f(n) . 
\end{equation} 
By \eqref{eqcard1} and \eqref{eqcard2}, we get:
$$ 
\frac{\varepsilon f(n+1)}{M}\leq 1+\frac{\varepsilon f(n+1)}{k},
$$ 
and hence $f(n+1)<(\frac{\varepsilon}{M}-\frac{\varepsilon}{k})^{-1} \leq ( \frac{\varepsilon}{M } - \frac{\varepsilon}{\lfloor M \rfloor +1} )^{-1} $, which contradicts $f(\cdot) > (\frac{\varepsilon}{M} - \frac{\varepsilon}{\lfloor M \rfloor +1})^{-1}$. \\
Note that since $(m_k + 1) + \varepsilon f (m_k + 1) <m_{k+1} < n_{k+1} + \varepsilon f(n_{k+1})$, we get that $m_k < n_{k+1}$. On the other hand, clearly $n_k < m_k$, so for each $k>M$ we have $n_k < m_k < n_{k+1}$. \\
We construct a set $A$ in such a way that $\card(A\cap m_k)=\max_{t \leq k} \lfloor \varepsilon f(n_{t})/t\rfloor$ and $A\cap n_{k+1}\setminus m_k=\emptyset$ for all $k\in\omega$ (this is clearly possible by an easy inductive construction). We may also notice that if $\max_{t \leq k} \lfloor \varepsilon f(n_{t})/t\rfloor\not=\lfloor \varepsilon f(n_{k})/k\rfloor$, then $A\cap [n_k,m_k]=\emptyset$. We also need to observe that $f(m_k)/f(n_k)\leq f(n_k+\lfloor\varepsilon f(n_k)\rfloor)/f(n_k)<M$. Then, since $\frac{f(m_k)}{g(m_k)}>k$, we have:
$$
\frac{\card(A\cap m_k)}{g(m_k)} = 
\frac{\max_{t \leq k} \lfloor \varepsilon f(n_{t})/t\rfloor}{g(m_k )} \geq 
\frac{ \lfloor \varepsilon f(n_k)/k\rfloor}{g(m_k )} \geq 
\frac{ \varepsilon f(n_k) -1}{ k g(m_k )} \geq
\frac{ \varepsilon f(n_k) -1}{ k (f(m_k)/k)} \geq
\frac{\varepsilon f(n_{k})-1}{ f(m_k)}
>\frac{\varepsilon}{2 M}
$$
for big enough $k$, as $\frac{1}{f(m_k)}\to 0$. Thus, $A \notin \Z_g$, hence $A$ is infinite and $f(n_k)/k\to\infty$ when $k\to\infty$. We will use this to prove that $\card(A \cap n)/f(n) \to 0$.
Take $n\in\omega$ and let $k\in\omega$ be the biggest natural number such that $n\geq n_k$ and $A\cap [n_k,m_k]\not=\emptyset$ (thus, $\card(A \cap m_k)= f(n_k)/k$). Then:
$$\frac{\card(A\cap n)}{f(n)}\leq 
\frac{\card (A \cap m_k)}{f(n_k)}=
\frac{ \lfloor \varepsilon f(n_{k})/k\rfloor}{f(n_k)}\leq 
\frac{\varepsilon }{k},
$$

hence $A \in \Z_f$. We proved that $\Z_g \neq \Z_f$ -- a contradiction. 
\end{proof}

\begin{Rem}
Observe that the following conditions are equivalent:
\begin{itemize}
\item[(i)] the condition from Proposition \ref{moce};
\item[(ii)] there is such $M>0$ that for almost all $n\in\omega$ we have $\frac{f(n+\lfloor f(n)/M\rfloor)}{f(n)}\leq M$;
\item[(iii)] there are such $M>0$ and $\varepsilon >0$ that for all $n\in\omega$ we have $\frac{f(n+\lfloor \varepsilon f(n)\rfloor)}{f(n)}\leq M$;
\item[(iv)]there is such $M>0$ that for all $n\in\omega$ we have $\frac{f(n+\lfloor f(n)/M\rfloor)}{f(n)}\leq M$.
\end{itemize} 
\end{Rem}
Now we provide two much easier conditions which are sufficient to calculate $\card(S_f)$ in some cases. 
\begin{prop}\label{tryw1}
Let $f\in H^\uparrow$.
\begin{itemize}
\item If there are such an increasing sequence $(n_k)_{k\in\omega}$ and $l \in \omega$ that $\lim_{k \to \infty} f(n_k + l)/f(n_k) = \infty$, then $\card(S_f) = \mathfrak{c}$.
\item If $\limsup_{n\to\infty} (f(n+1) - f(n)) < \infty$, then $\card(S_f)=1$.
\end{itemize}  
\end{prop}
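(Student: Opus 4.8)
The plan is to reduce both parts to the characterization of $\card(S_f)=1$ furnished by Theorem \ref{moce}, combined with the dichotomy (recorded right after Proposition \ref{moce2}) that $\card(S_f)$ is either $1$ or $\ce$. So in each case I only have to decide whether the numerical condition
$$
\frac{f(n+\lfloor\varepsilon f(n)\rfloor)}{f(n)}\leq M\quad\text{for almost all }n\in\omega
$$
holds for some $M,\varepsilon>0$.

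For the first part I would show that this condition \emph{fails}, which by the dichotomy forces $\card(S_f)=\ce$. Fix arbitrary $M>0$ and $\varepsilon>0$; I must produce infinitely many $n$ with $f(n+\lfloor\varepsilon f(n)\rfloor)/f(n)>M$. Note $l\geq 1$ (otherwise the hypothesis would assert that the constant $1$ tends to $\infty$). Since $f\in G$ we have $f(n)\to\infty$, hence $f(n_k)\to\infty$ along the given increasing sequence; therefore for all large $k$ we get $\varepsilon f(n_k)\geq l$, and since $l\in\omega$ this yields $\lfloor\varepsilon f(n_k)\rfloor\geq l$. Monotonicity of $f$ then gives
$$
\frac{f(n_k+\lfloor\varepsilon f(n_k)\rfloor)}{f(n_k)}\geq\frac{f(n_k+l)}{f(n_k)}\xrightarrow{k\to\infty}\infty ,
$$
so the left-hand side exceeds $M$ for all large $k$, and as $(n_k)$ is infinite we are done. (One could instead build an explicit competitor $g\in S_f$ not $R$-equivalent to $f$ and quote Proposition \ref{moce2}, but routing through Theorem \ref{moce} avoids that construction.)

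For the second part I would verify that the condition \emph{holds}, so that Theorem \ref{moce} gives $\card(S_f)=1$. Put $L:=\limsup_{n\to\infty}(f(n+1)-f(n))<\infty$, choose $C>L$ and $n_0$ with $f(n+1)-f(n)\leq C$ for all $n\geq n_0$; telescoping gives $f(n+j)-f(n)\leq Cj$ for every $n\geq n_0$ and every $j\in\omega$. Take $\varepsilon=1$, so that $\lfloor\varepsilon f(n)\rfloor=f(n)$ because $f\in H^\uparrow$ is integer-valued. Then for all $n\geq n_0$,
$$
\frac{f(n+\lfloor\varepsilon f(n)\rfloor)}{f(n)}=\frac{f(n+f(n))}{f(n)}\leq\frac{f(n)+Cf(n)}{f(n)}=1+C ,
$$
so the condition of Theorem \ref{moce} is met with this $\varepsilon$ and $M:=1+C$.

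I do not expect a genuine obstacle: both arguments are essentially immediate once Theorem \ref{moce} is available. The only steps worth double-checking are the interaction between the fixed additive shift $l$ (resp.\ the bounded increments of $f$) and the floor $\lfloor\varepsilon f(n)\rfloor$ — namely that $\lfloor\varepsilon f(n_k)\rfloor\geq l$ for large $k$ in the first part, which uses $f(n)\to\infty$, and the telescoping estimate $f(n+j)\leq f(n)+Cj$ in the second.
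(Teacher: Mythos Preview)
Your proof is correct and follows essentially the same approach as the paper: both parts are reduced to the characterization in Theorem \ref{moce} (together with the $1$-or-$\ce$ dichotomy from Proposition \ref{moce2}), using in the first part that $\lfloor\varepsilon f(n_k)\rfloor\geq l$ for large $k$ and monotonicity of $f$, and in the second part a telescoping estimate with $\varepsilon=1$. The only cosmetic difference is that the paper uses $\alpha=\limsup(f(n+1)-f(n))$ directly (which works since the increments are integers), whereas you pick any $C>L$; the arguments are otherwise identical.
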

\begin{proof}
We start with proving the first part of the Proposition. In view of Proposition \ref{moce2} and Theorem \ref{moce} it is enough to prove that for each $\varepsilon, M >0$ there exist infinitely many such $n$ that $\frac{f(n+\lfloor\varepsilon f(n)\rfloor)}{f(n)}> M$, so we fix $\varepsilon, M>0$ and such $k_0 \in \omega$ that $\varepsilon f(n_k)>l$ and $f(n_k +l)/f(n_k) > M$ for each $k> k_0$. Thus, for every such $k$ we have $\frac{f(n_k+\lfloor\varepsilon f(n_k)\rfloor)}{f(n_k)}\geq \frac{f(n_k+\lfloor l\rfloor)}{f(n_k)} = \frac{f(n_k+l)}{f(n_k)} > M$.\\
Now we move to proving the second statement. Again, we will base on Proposition \ref{moce2} and Theorem \ref{moce}. Let us denote $\alpha := \limsup_{n\to\infty} (f(n+1) - f(n))$. Then, for big enough $n$ we have:
$$
\frac{f(n+ \lfloor  f(n)\rfloor)}{f(n)} = \frac{f(n+ \lfloor  f(n)\rfloor) - f(n+ \lfloor  f(n)\rfloor -1) + \ldots +f(n+ 1) - f(n) + f(n)}{f(n)} \leq 
$$
$$
\leq \frac{\alpha \lfloor  f(n) \rfloor + f(n)}{f(n)} \leq \frac{(\alpha+1)  f(n)}{f(n)}=\alpha+1,
$$
and we conclude our claim. 
\end{proof}
Note that if the first case of the previous Proposition holds, then one can produce an increasing sequence $(m_k)$ satisfying $\lim_{k \to \infty} \frac{f(m_k)+1}{f(m_k)}=0.$

Proposition \ref{tryw1} allows us to easily produce plenty of examples of functions $f \in H$ for both cases: $\card(S_f) = 1$ and $\card(S_f) = \mathfrak{c}$. However, as one may suppose, those conditions do not provide us with a characterization, as shown in the following examples. 
\begin{ex}
Let us define $f: \omega \to \omega$ by: 
$$f(n)=\left\{\begin{array}{ll}
2^k & \textrm{if } n \in [2^k , 2^k +k)\textrm{ for }k\in\omega,\\
n & \textrm{otherwise.}
\end{array}\right.$$
Then clearly $f$ is nondecreasing and $f \in H$. Moreover, $f(n) \in [n - \log_2(n) , n]$, hence $\frac{f(n)}{n} \to 1$ and consequently $\Z_f = \Z$, thus $\card(S_f) =1 $, as the identity function satisfies the condition from Proposition \ref{moce}. On the other hand, clearly $\limsup_{n\to\infty} (f(n+1) - f(n)) = \infty$. 
\end{ex}

\begin{ex}
Let us fix such an increasing sequence $(m_k)$ that $m_0 = 1$, $m_1 = 1$ and $m_{k+1} > \max\{ k m_k ,m_k +k\} $. Then we set $f\in H$ as:
$$f(n)=\left\{\begin{array}{ll}
0 & \textrm{if } n=0, \\
1 & \textrm{if } n=1, \\
l f(m_k) & \textrm{if } n = m_k + l \textrm{ for }k\geq 1 \textrm{ and } l \in \{ 1, \ldots k\},\\
f(m_k + k) & \textrm{if } n \in (m_k + k, m_{k+1}] \textrm{ for } k\geq 1.
\end{array}\right.$$
We also set $g \in H$ as:
$$g(n)=\left\{\begin{array}{ll}
0 & \textrm{if } n=0, \\
1 & \textrm{if } n=1, \\
f(m_{k+1}) & \textrm{if } n \in (m_k , m_{k+1}].
\end{array}\right.$$
Since $m_{k+1} > m_k + k$, the above definitions are correct, and since $m_{k+1}> k m_k $, one may easily check in the inductive way that $\on{id}_\omega \geq g$. Note that $g(m_{k}+1)/g(m_k) = k$, so, by Proposition \ref{tryw1}, $\card(S_g) = \mathfrak{c}$. On the other hand, for any $l \in \omega$ and $n>0$ we get $ f(n+l)/f(n) \leq l$. It remains to prove that $\Z_g = \Z_f$. Since $g \geq f$, it suffices to prove that $\Z_g \subset \Z_f$, so let us fix $A \in \Z_g$ and set $A' = A \setminus \bigcup_{k \in \omega}(m_k , m_k +k]$. Then $A' \in \Z_f$ since $\card(A'\cap n)/f(n) = \card(A'\cap n)/g(n)$ for $n \in \bigcup_{k \in \omega} (m_k + k , m_{k+1}]$ and $\card(A'\cap n)/f(n) \leq \card(A'\cap m_k )/f(m_k) = \card(A'\cap m_k )/g(m_k)$ if $n \in (m_k , m_k +k]$. Thus, it only remains to prove that $B:= \bigcup_{k \in \omega}(m_k , m_k +k] \in \Z_f$. But this is trivial since  one may easily check that $f(m_k)=(k-1)!$ for $k \geq 1$, so if $n = m_k +l$ for some $k \in \omega, l \in \{1, \ldots , k\}$, then 
$$
\frac{\card(B \cap n)}{f(n)} = \frac{1 + 2 + \ldots + (k-1) +l-1}{(k-1)!\cdot l}\leq \frac{1 + 2 + \ldots + (k-1) +k}{(k-1)!} = \frac{(k+1)k/2}{(k-1)!} \to 0,
$$
and if $n \in(m_{k} +k, m_{k+1}]$, then $\card(B \cap n)/f(n)=\card(B \cap (m_k+k))/f(m_k +k) \to 0$ by the previous case. 
\end{ex}

\section{Preliminaries on ordering}\label{sekwstepmnog}
This Section concentrates on various, rather well-known, results which will be useful in Section $5$ of our paper. \\
We start with the notion of bounding number. For more details see e.g. \cite{Buk}.
\begin{df}
Let $(X, \preceq )$  be a partially ordered set (poset in short). We say that $A \subset X$ is \emph{bounded} if there exists such $a_0 \in X$ that for each $a \in A$  we have $a \preceq a_0$, and we say that a set is \emph{unbounded} if it is not bounded. By \emph{bounding number} of such poset we denote the following cardinal: 
$$\mf{b} (X, \preceq ) = \min\{ \card(A) : A \subset X \mbox{ is unbounded} \}.$$  
Moreover, we define one special bounding number, namely $\mf{b} = \mf{b} ( \omega^{\omega}, \leq^{\star})$, where for $\alpha , \beta \in \omega^{\omega}$ we set $\alpha \leq^{\star} \beta $ if $\alpha(n) \leq \beta (n)$ for all but finitely many $n \in \omega$.
\end{df}
Now we will prove two rather folklore lemmas.
\begin{lem}\label{lemat1}
Let $(X, \preceq)$ be a poset and $Y \subset X$. If there is a function $\varphi \colon X \to Y$ with $\varphi(x) \succeq x$, then $\mf{b}(X,\preceq)=\mf{b}(Y,\preceq),$ where the order in $Y$ is inherited from $X$.
\end{lem}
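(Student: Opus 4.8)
The plan is to prove $\mf{b}(X,\preceq)=\mf{b}(Y,\preceq)$ by showing the two inequalities separately, exploiting the ``retraction'' $\varphi$ with $\varphi(x)\succeq x$.

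First I would show $\mf{b}(Y,\preceq)\geq\mf{b}(X,\preceq)$. Let $B\subset Y$ be unbounded in $Y$ with $\card(B)=\mf{b}(Y,\preceq)$. I claim $B$ is unbounded in $X$. Suppose not: there is $a_0\in X$ with $b\preceq a_0$ for all $b\in B$. Then $\varphi(a_0)\in Y$ and $b\preceq a_0\preceq\varphi(a_0)$ for all $b\in B$, so $\varphi(a_0)\in Y$ bounds $B$ in $Y$, contradicting unboundedness of $B$ in $Y$. Hence $B$ is an unbounded subset of $X$, so $\mf{b}(X,\preceq)\leq\card(B)=\mf{b}(Y,\preceq)$.

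Next I would show $\mf{b}(X,\preceq)\geq\mf{b}(Y,\preceq)$. Let $A\subset X$ be unbounded in $X$ with $\card(A)=\mf{b}(X,\preceq)$. Consider $A':=\varphi[A]\subset Y$. Since $x\preceq\varphi(x)$, any bound for $A'$ in $X$ (in particular in $Y$) is also a bound for $A$; as $A$ is unbounded in $X$, $A'$ cannot be bounded in $X$, hence $A'$ is not bounded in $Y$ either. Therefore $\mf{b}(Y,\preceq)\leq\card(A')\leq\card(A)=\mf{b}(X,\preceq)$. Combining the two inequalities yields the claim.

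This argument is essentially book-keeping with the defining inequality $\varphi(x)\succeq x$, so I do not expect a genuine obstacle; the only point requiring a little care is that ``bounded in $Y$'' and ``bounded in $X$'' differ a priori (a subset of $Y$ might have a bound lying in $X\setminus Y$), and the role of $\varphi$ is precisely to push such a bound back into $Y$ in the first inequality, while in the second inequality one observes that a bound in $X$ would already bound $A$, so non-existence of a bound in $X$ forces non-existence of a bound in $Y$. One should also note the degenerate cases (e.g.\ $Y=\emptyset$ is excluded since $\varphi$ maps into $Y$, and if every subset of $X$ is bounded then so is every subset of $Y$ and both bounding numbers are undefined/$\infty$ in the usual convention), but these are not really issues for the intended application.
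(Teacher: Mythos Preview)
Your argument is correct and essentially identical to the paper's own proof: both directions are obtained by pushing bounds through the inequality $x\preceq\varphi(x)$, using $\varphi[A]$ for one inequality and the trivial inclusion $Y\subset X$ for the other. The only cosmetic difference is the order in which the two inequalities are treated.
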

\begin{proof}
First we will prove that any unbounded set $B$ in $X$ determines an unbounded set $C$ in $Y$ with $\card(C) \leq \card(B)$. Obviously, for any $A \subset X$ the inequality $\card(\varphi[A]) \leq \card(A)$ holds. Assume now that $A$ is unbounded in $X$. Supposition that $\varphi[A]$ is bounded in $Y$ by some $y \in Y$ provides inequalities $x \preceq \varphi(x) \preceq y,$ satisfied for each $x \in A$. This contradicts the unboundedness of $A$ in $X$, because $y \in Y \subset X$. Thus, we infer that $\mf{b}(Y,\preceq) \leq \mf{b}(X,\preceq)$.  For the second inequality, consider an unbounded subset $A$ of $Y$ and suppose that $A$ is bounded in $X$ by some $x \in X$. Then, for all $y \in A$ we have $y \preceq x \preceq \varphi(x) \in Y,$ which contradicts the unboundedness of $A$ in $Y$. This proves that $\mf{b}(X,\preceq) \leq \mf{b}(Y,\preceq).$
\end{proof}
\begin{cor}
$\mf{b} = \mf{b} (\omega^{\omega} \uparrow , \leq^{\star} )$, where $\omega^{\omega} \uparrow $ stands for the set $\{\alpha \in \omega^\omega : \alpha \mbox{ is strictly increasing} \}$.
\end{cor}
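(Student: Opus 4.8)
The plan is to invoke Lemma~\ref{lemat1} with $X=\omega^\omega$, $Y=\omega^\omega\uparrow$ and $\preceq\,=\,\leq^{\star}$, so that the whole statement reduces to producing a map $\varphi\colon\omega^\omega\to\omega^\omega\uparrow$ that dominates the identity, i.e.\ $\varphi(\alpha)\succeq\alpha$ for every $\alpha\in\omega^\omega$. Once such a $\varphi$ is available, Lemma~\ref{lemat1} immediately yields $\mf{b}(\omega^\omega,\leq^{\star})=\mf{b}(\omega^\omega\uparrow,\leq^{\star})$, and the left-hand side is $\mf{b}$ by definition, which is exactly the claim.

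A concrete choice is $\varphi(\alpha)(n):=n+\max_{i\leq n}\alpha(i)$. First I would check that $\varphi(\alpha)\in\omega^\omega\uparrow$: the map $n\mapsto\max_{i\leq n}\alpha(i)$ is nondecreasing, so adding the strictly increasing term $n$ makes $\varphi(\alpha)$ strictly increasing. Next, since $\varphi(\alpha)(n)\geq\alpha(n)+n\geq\alpha(n)$ for all $n$, we in fact have pointwise domination, which in particular gives $\varphi(\alpha)\geq^{\star}\alpha$, i.e.\ $\varphi(\alpha)\succeq\alpha$ in $(\omega^\omega,\leq^{\star})$. That is all the hypothesis of Lemma~\ref{lemat1} requires.

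The only formality worth addressing is that $\leq^{\star}$ is merely a preorder (it fails antisymmetry), whereas Lemma~\ref{lemat1} is phrased for posets; however, the proof of that lemma uses only transitivity and the existence of $\varphi$, never antisymmetry, so it applies here verbatim. Consequently there is no real obstacle in this Corollary: it is a direct application of the preceding lemma, the sole (trivial) ingredient being the dominating map into the strictly increasing functions written down above.
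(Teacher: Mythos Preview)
Your proof is correct and follows essentially the same approach as the paper: both invoke Lemma~\ref{lemat1} and reduce the question to constructing, for each $\alpha\in\omega^\omega$, a strictly increasing $\beta\geq^{\star}\alpha$. The paper uses the recursive choice $\beta(0)=\alpha(0)+1$, $\beta(n+1)=\max\{\alpha(n+1),\beta(n)\}+1$, whereas you give the closed formula $\varphi(\alpha)(n)=n+\max_{i\leq n}\alpha(i)$; these are interchangeable. Your remark that Lemma~\ref{lemat1} only uses transitivity (so the preorder $\leq^{\star}$ poses no problem) is a fair point that the paper leaves implicit.
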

\begin{proof}
By the above Lemma, it suffices to show that for any $\alpha \in \omega^{\omega}$ there exists $\beta \in \omega^{\omega} \uparrow$ satisfying $\alpha \ls \beta$. Indeed, take any $\alpha \in \omega^{\omega}$ and set $\beta (0) = \alpha(0) + 1$, $\beta(n+1) = \max \{ \alpha(n+1), \beta(n) \} + 1$. Then, obviously, $\alpha(n) \leq \beta (n)$ for each $n \in \omega$ and $\beta$ is strictly increasing.
\end{proof}
\begin{lem}
$\mf{b}=\mf{b}(H,\geq^{\star})$.
\end{lem}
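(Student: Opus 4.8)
The plan is to prove $\mf{b}(H,\geq^\star)=\mf{b}$ by moving, through two routine applications of Lemma~\ref{lemat1}, from $(H,\geq^\star)$ to $(\omega^\omega\uparrow,\ls)$ — for which $\mf{b}(\omega^\omega\uparrow,\ls)=\mf{b}$ by the preceding Corollary — the bridge between the two being the generalized inverse of a monotone function, which interchanges $\geq^\star$ with $\ls$. First I would reduce to nondecreasing functions: the map $\varphi\colon H\to H^\uparrow$ given by $\varphi(g)(n)=\min\{g(m):m\geq n\}$ takes values in $H^\uparrow$ (it is nondecreasing, tends to $\infty$, and $\frac{n}{\varphi(g)(n)}\geq\frac{n}{g(n)}\not\to 0$) and satisfies $\varphi(g)\leq g$ pointwise, i.e.\ $\varphi(g)\succeq g$ in the poset $(H,\geq^\star)$; so Lemma~\ref{lemat1} gives $\mf{b}(H,\geq^\star)=\mf{b}(H^\uparrow,\geq^\star)$.

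The heart of the argument is to identify $(H^\uparrow,\geq^\star)$ with a nicer poset. Let $\mathcal{M}$ be the set of nondecreasing $h\colon\omega\to\omega$ with $h(n)\to\infty$, and for $h\in\mathcal{M}$ put $h^-(k)=\min\{n:h(n)>k\}$. I would verify the elementary facts that $h^-\in\mathcal{M}$; that $h^{--}=h$ (by monotonicity $h^-(k)>m$ iff $h(m)\leq k$, so $h^{--}(m)=\min\{k:k\geq h(m)\}=h(m)$); that $(\cdot)^-$ reverses $\ls$ (if $h_1\leq^\star h_2$ then $h_2(h_1^-(k))\geq h_1(h_1^-(k))>k$ for all but finitely many $k$, whence $h_2^-(k)\leq h_1^-(k)$, and combined with $h^{--}=h$ this gives $h_1\leq^\star h_2\iff h_2^-\leq^\star h_1^-$); and — the key point — that $\limsup_n\frac{n}{h^-(n)}>0\iff\limsup_m\frac{h(m)}{m}>0$. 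Applying this last equivalence to $h$ and (via $h^{--}=h$) to $h^-$ shows that $(\cdot)^-$ is a bijection of $H^\uparrow$ onto $\mathcal{M}^+:=\{h\in\mathcal{M}:\limsup_m h(m)/m>0\}$ — the defining condition $\frac{n}{g(n)}\not\to 0$ of $H$ corresponding precisely to $\limsup_m h(m)/m>0$ — and since it carries $\geq^\star$ to $\ls$ it is a poset isomorphism $(H^\uparrow,\geq^\star)\cong(\mathcal{M}^+,\ls)$, so $\mf{b}(H^\uparrow,\geq^\star)=\mf{b}(\mathcal{M}^+,\ls)$.

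It then remains to collapse $\mathcal{M}^+$ to $\omega^\omega\uparrow$: every strictly increasing $h\colon\omega\to\omega$ has $h(n)\geq n$, so $\omega^\omega\uparrow\subset\mathcal{M}^+$, and $h\mapsto(m\mapsto h(m)+m+1)$ maps $\mathcal{M}^+$ into $\omega^\omega\uparrow$ with image pointwise $\geq h$, i.e.\ $\succeq h$ in $(\mathcal{M}^+,\ls)$; Lemma~\ref{lemat1} gives $\mf{b}(\mathcal{M}^+,\ls)=\mf{b}(\omega^\omega\uparrow,\ls)=\mf{b}$, and chaining the equalities yields $\mf{b}(H,\geq^\star)=\mf{b}$. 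The only genuinely non-formal part, and where I expect to have to be careful, is the middle step: checking that $(\cdot)^-$ is an involution on $\mathcal{M}$ and, above all, that it transports the membership condition ``$n/h(n)\not\to 0$'' of $H$ exactly onto ``$\limsup_m h(m)/m>0$'' — a short but slightly delicate computation with the two $\min$-definitions — together with the routine ``for all but finitely many $k$'' bookkeeping in the order reversal. Everything else is mechanical use of Lemma~\ref{lemat1}.
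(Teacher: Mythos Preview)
Your proof is correct and takes essentially the same approach as the paper: both hinge on the generalized inverse of a monotone function, which reverses $\leq^\star$, combined with Lemma~\ref{lemat1}. The paper's organisation differs only marginally --- rather than reducing $H\to H^\uparrow$ first and then $\mathcal{M}^+\to\omega^\omega\!\uparrow$, it shows directly that the inverse gives an order isomorphism $(\omega^\omega\!\uparrow,\leq^\star)\cong(A,\geq^\star)$ where $A\subset H$ is the set of nondecreasing surjections, and then applies Lemma~\ref{lemat1} once to a map $H\to A$.
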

\begin{proof}
Define $A=\{\beta \in \omega^{\omega} \colon \beta \textrm{ is a nondecreasing surjection} \}$. We will prove that $\mf{b}=\mf{b}(A,\geq^{\star})=\mf{b}(H,\geq^{\star})$. Thanks to the previous Corollary, we know that $\mf{b} = \mf{b} (\omega^{\omega} \uparrow , \leq^{\star} ).$ Let us define a function $f \colon \omega^{\omega} \uparrow \to A$ in the following way$\colon$
$$ \left(f(\alpha)\right)(k)= \on{min} \{ n \in \omega : k \leq \alpha(n) \}.$$
Since each $\alpha \in \omega^\omega \uparrow$ is increasing, we get that $f(\alpha)$ is nondecreasing and surjective. This proves that, indeed, $f(\alpha) \in A$.
Now we will show that $f$ is a one-to-one correspondence. Take any distinct $\alpha_1 ,\alpha_2 \in \omega^{\omega} \uparrow$ and pick the smallest $n_0 \in \omega$ with $\alpha_1(n_0) \neq \alpha(n_0)$. 
Without loss of generality, we may assume that $\alpha_1 (n_0) < \alpha_2 (n_0)$. Then for any $m \in (\alpha_1 (n_0) , \alpha_2 (n_0)]$ we get $f(\alpha_1 ) (m)\geq n_0 +1 >f(\alpha_2)(m)$, thus $f$ is injective. To prove that $f$ is surjective, we simply claim that for $\beta \in A$ we have $\beta=f(\alpha)$, where $\alpha(n) = \on{max}\{ k \in \omega : \beta(k)=n \}$. 
Now we will show the equivalence 
$$\forall_{\alpha_1,\alpha_2 \in \omega^{\omega} \uparrow}  \ \alpha_1 \leq^{\star} \alpha_2 \Leftrightarrow f(\alpha_1) \geq^{\star} f(\alpha_2).$$
Take any $\alpha_1,\alpha_2 \in \omega^{\omega} \uparrow$ with $\alpha_1 \leq^{\star} \alpha_2$. Pick such $n_0 \in \omega$ that for all $n\geq n_0$ we have $\alpha_1(n) \leq \alpha_2(n)$ and take any $k>\alpha_1(n_0)$. Then 
$$
 (f(\alpha_1))(k) =\on{min}\{n \in \omega : k \leq \alpha_1(n) \} \geq \on{min}\{n \in \omega : k \leq \alpha_2(n) \} = (f(\alpha_2))(k)  ,
$$
as $\{n \in \omega : k \leq \alpha_1(n) \} \subset \{ n \in \omega: \alpha_1(n) \leq \alpha_2(n)\}$. Since $f$ is a bijection, we obtained also the second implication.
We proved that $(A,\geq^{\star})$ and $(\omega^{\omega} \uparrow, \leq^{\star})$ are isomorphic, so $\mf{b}=\mf{b}(A,\geq^{\star})$. \\
Now, let us observe that $A \subset H$. Indeed, since each $\beta
\in A$ is a nondecreasing surjection, we obtain that $\beta(n) \to \infty$ and $\beta(n) \leq n$, thus $\beta \in H$. \\
Now we will define such function $h \colon H \to A$ that $h(\gamma) \leq^{\star} \gamma$ for all $\gamma \in H$.
Take $\gamma \in H$. We will proceed inductively. Put $h(\gamma)(0)=0$ and $h(\gamma)(n+1) = \min\{ h(\gamma)(n)+1, \gamma(n+1) , \gamma(n+2), \gamma(n+3), \ldots \}$. Clearly, $h(\gamma)$ is nondecreasing and $h(\gamma)(n) \leq \gamma(n)$ for all $n \in \omega$. Since $h(\gamma)(n+1) - h(\gamma)(n) \leq 1$, it is enough to use the fact that $\gamma(n) \to \infty$ to obtain that $h(\gamma)$ is surjection. By Lemma \ref{lemat1}, we conclude our claim.
\end{proof}

\section{Ordered set $\{\Z_g : g \in G \}$}\label{sekporzadekmiedzyidealami}
There are many known partial preorders (i.e., relations which are reflexive and transitive) in the family of all ideals on $\omega$. In this Section we present some results showing how simple density ideals behave with respect to two of them: inclusion, which is one of the strongest, and Kat\v{e}tov order, which is one of the weakest. As a corollary we obtain $\mathfrak{c}$ many non-isomorphic simple density ideals. We start with strengthening of \cite[Theorem 2.7]{Den} where the authors have constructed an antichain of cardinality $\mathfrak{c}$ with respect to inclusion.
\begin{thm}\label{antyincl}
For any $A \subset G$ satisfying $\card(A)< \mathfrak{b}$ there exists such a family $G_0 \subset H^\uparrow$ of cardinality $\mathfrak{c}$ that for any $f \in A$, $f_1, f_2 \in G_0$ the ideals $\mathcal{Z}_{f}, \mathcal{Z}_{f_1}, \mathcal{Z}_{f_2}$ are pairwise incomparable with respect to inclusion whenever $f_1 \neq f_2$.
\end{thm}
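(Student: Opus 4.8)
The plan is to build a perfect family of functions in $H^\uparrow$ by encoding elements of $2^\omega$ along a sufficiently fast-growing sequence of ``blocks'', in the spirit of the construction in \cite[Theorem 2.7]{Den}, but with the extra care needed to also defeat all the ideals $\Z_f$ for $f\in A$. The key structural tool is the following: to show $\Z_{f_1}\not\subset\Z_{f_2}$ it suffices to produce an increasing set $B=\{b_0<b_1<\dots\}$ with $\card(B\cap b_k)/f_1(b_k)\to 0$ (so $B\in\Z_{f_1}$ by Lemma \ref{Michal}) while $\card(B\cap n)/f_2(n)\not\to 0$ along some subsequence. So the whole problem reduces to arranging, on disjoint stretches of $\omega$, ``witness sets'' that are small relative to one weight and large relative to another.

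First I would fix, using $\card(A)<\mf b$, a single function $f^\star\in H^\uparrow$ that $\geq^\star$-dominates every $f\in A$; this is legitimate because $\mf b=\mf b(H,\geq^\star)$ by the last Lemma of Section \ref{sekwstepmnog}, so a set of size $<\mf b$ in $(H,\geq^\star)$ is bounded, i.e.\ there is $f^\star\in H$ with $f^\star\geq^\star f$ for all $f\in A$, and we may take $f^\star$ nondecreasing by Proposition 2. The point of $f^\star$ is that a set which is ``$\Z_{f^\star}$-small'' is automatically $\Z_f$-small for every $f\in A$ (on the cofinite domain where $f\le f^\star$), so it is enough to make all the members of $G_0$ sit, in a suitable sense, ``below $f^\star$ on the coding blocks'' — there the witness sets we use to separate $\Z_{f_1}$ from $\Z_{f_2}$ will also witness $\Z_{f_i}\not\subset\Z_f$ and $\Z_f\not\subset\Z_{f_i}$. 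One must be a little careful that $f^\star$ itself lies in $H^\uparrow$ and that $n/f^\star(n)\not\to 0$; by enlarging $f^\star$ only where needed (as in the normalizations used in the proof of Theorem \ref{moce}) this is harmless.

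Next I would choose a rapidly increasing sequence $(N_k)_{k\in\omega}$ (rapid enough that $N_{k+1}$ dwarfs $f^\star(N_k)$, $k$, and all the finitely-much data accumulated so far), partition $\omega$ into consecutive intervals $I_k=[N_k,N_{k+1})$, and for each $\alpha\in 2^\omega$ define $f_\alpha\in H^\uparrow$ by prescribing its behaviour on $I_k$ according to $\alpha_k$: on ``$\alpha_k=0$'' blocks let $f_\alpha$ grow slowly (like the identity, or like some fixed small nondecreasing function), and on ``$\alpha_k=1$'' blocks let it jump up to roughly $f^\star(N_{k+1})$ and then stay flat. Monotonicity is maintained by the usual trick of taking maxima with the running value. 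Then, exactly as in Proposition \ref{moce2}, if $\alpha$ and $\beta$ differ on infinitely many coordinates one can exhibit on the mismatched blocks a set that is $\Z_{f_\alpha}$-small but not $\Z_{f_\beta}$-small and vice versa — concretely, on a block where $\alpha_k=1,\beta_k=0$ put an arithmetic-progression-like witness of size $\approx f^\star(N_k)^{1/2}$ (or any gauge strictly between the two weights' growth on that block), using Lemma \ref{Michal} to check smallness via the enumeration, and use the gap $N_{k+1}\gg f^\star(N_k)$ to kill contributions from other blocks. The same witnesses, being of size $\le f^\star$ on their block, are $\Z_{f^\star}$-small hence $\Z_f$-small for $f\in A$, so $\Z_{f_i}\not\subset\Z_f$; and symmetrically, by putting on each ``$\alpha_k=1$'' block a set of size comparable to $f^\star(N_k)$ one gets a set in $\Z_f$ (since $f\le^\star f^\star$ forces it small) that is not in $\Z_{f_\alpha}$, giving $\Z_f\not\subset\Z_{f_i}$. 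Finally, take $G_0=\{f_\alpha:\alpha\in\A\}$ for an almost-disjoint-type family $\A\subset 2^\omega$ of size $\ce$ with any two members differing infinitely often (characteristic functions of a MAD family, as in Proposition \ref{moce2}); then the $\Z_{f_\alpha}$, $\alpha\in\A$, are pairwise $\subset$-incomparable and each is $\subset$-incomparable with every $\Z_f$, $f\in A$.

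The main obstacle — and where the bulk of the honest work lies — is the bookkeeping that makes the three kinds of incomparability hold \emph{simultaneously} with a \emph{single} choice of blocks: one needs the block-growth to be fast enough that a witness tailored to separate $f_\alpha$ from $f_\beta$ on block $I_k$ does not accidentally become non-small for some $f\in A$ (this is what $f^\star$ and the $\geq^\star$-domination buy us, but one must track the finitely many ``bad'' early indices), and conversely that the $\Z_f$-small/$\Z_{f_\alpha}$-large witnesses really are large for $f_\alpha$ on the right blocks. Getting the quantitative inequalities to line up is routine given Lemma \ref{Michal} and the freedom to take $(N_k)$ as fast-growing as we like, but it must be done with some care; everything else is a direct adaptation of the arguments already in \cite{Den} and in Proposition \ref{moce2}.
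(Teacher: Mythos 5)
Your architecture — bound the family $A$ using $\card(A)<\mf b$, then encode $2^\omega$ along a rapidly growing block structure indexed by a MAD family — matches the paper's. But there is a genuine gap in how you use the bound, and it breaks one of the three required incomparabilities. You take only a single ``dominating'' function $f^\star$ with $f\leq^\star f^\star$ for $f\in A$, and at the crucial step you write that since $f\le^\star f^\star$, a set of size comparable to $f^\star(N_k)$ on its block lies in $\Z_f$. This reverses the inequality: $f\le^\star f^\star$ gives $\card(C\cap n)/f(n)\ge\card(C\cap n)/f^\star(n)$, so an upper bound on $f$ gives you \emph{no upper control} on $\card(C\cap n)/f(n)$, and in particular does not put $C$ into $\Z_f$. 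The same confusion appears earlier where you assert that ``$\Z_{f^\star}$-small hence $\Z_f$-small'' — in fact $f\le f^\star$ gives $\Z_f\subseteq\Z_{f^\star}$, the opposite implication. Consequently the whole half of the argument showing $\Z_f\not\subset\Z_{f_\alpha}$ does not go through.

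What is missing is a \emph{lower} bound: you also need $g_1\in H^\uparrow$ with $g_1\le^\star f$ for all $f\in A$, so that $\card(C\cap n)/f(n)\le\card(C\cap n)/g_1(n)$, and you can tune $C$ to be $\Z_{g_1}$-small on the coding blocks while remaining large relative to $f_\alpha$ there. This is exactly what the paper does: it produces \emph{both} $g_1\le^\star f\le^\star g_2$, uses $g_2$ for $\Z_{f_\alpha}\not\subset\Z_f$ and $g_1$ for $\Z_f\not\subset\Z_{f_\alpha}$, and the block parameters $(a_n,k_n,l_n,h(a_n))$ are chosen so that the two kinds of witnesses coexist on the same block decomposition. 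Incidentally, your justification for the existence of $f^\star$ is also misattributed: $\mf b=\mf b(H,\geq^\star)$ means a set of size $<\mf b$ has a $\geq^\star$-\emph{upper} bound in that poset, i.e., a $\le^\star$-\emph{lower} bound $g_1$ in the usual sense — it is the plain $\mf b=\mf b(\omega^\omega,\le^\star)$ that gives the $\le^\star$-upper bound $g_2$. Both bounds exist, but you need to invoke both and keep the inequality directions straight.
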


\begin{proof}
Take any $A \subset G$ with $\card(A)<\mathfrak{b}$. Without loss of generality we may assume that $A \subset H^\uparrow$. Since $\mathfrak{b}=\mathfrak{b}(H^\uparrow, \geq^{\star})$, there are such $g_1 \in H^\uparrow, \ g_2 \in \omega^\omega$ that $g_1 \leq^{\star} f \leq^{\star} g_2$ for all $f \in A$. By some finite modifications, we may additionally assume that $g_1 \leq g_2$. Fix a maximal almost disjoint family $D$ of subsets of $\omega$ with $\card{D}=\mathfrak{c}$ (for the existence of such a family see e.g. \cite[Theorem 5.35]{Buk}).
\newline
Since $g_2 \in G$, $\frac{n}{g_2(n)} \nrightarrow 0$. Then we may fix such $a \in (0,\frac{1}{3})$ that $\frac{n}{g_2(n)}>a$ for infinitely many $n$. 
\newline
Now we will define sequences $(a_n), (l_n), (k_n)$ and values $h(a_n)$. We start with setting $a_0=0$. Assume that $a_n$ is defined. Choose $k_n$ satisfying $\frac{k_n}{g_2(a_n+k_n)}>a.$ Now, find such $h(a_n) \in \omega$ that
$$
\frac{a_n+k_n}{h(a_n)}<\frac{1}{n} \mbox{ and } h(a_n)\geq g_2(a_n).
$$
Next we fix $l_n>a_n+ k_n $ with 
\begin{equation}
\frac{l_n - 1}{h(a_n)}>a. \label{eq:l} \end{equation} 
Further, we look for such $a_{n+1}>l_n + a_n$ that the inequalities
$$
\frac{a_n+l_n}{g_1(a_{n+1}-l_n)}<\frac{1}{n} \mbox{ and } g_1(a_{n+1}) \geq h(a_n) + 1
$$
hold.
\newline
Note that the sequence $(a_n)$ is increasing, hence we may define $I_n:=[a_n,a_{n+1})$ and $h(m)=h(a_n)$ whenever $m \in I_n$. Since $\bigcup_{n \in \omega} I_n = \omega$, we obtain that $h \in \omega^{\omega}$ and $h(n) \rightarrow \infty$.
\newline
For $P \in D$ set 
$$
h_P(n)=\begin{cases} h(n) &\mbox{ if } n \in \bigcup_{m \in P} I_m, \\ \lfloor\frac{g_1(n)+g_2(n)}{2}\rfloor &\mbox{ otherwise.} \end{cases}
$$
Set $G_0:=\{h_P \colon P \in D\}.$ We will show that $G_0 \subset H^\uparrow$. Fix $P \in D$.
\newline
Obviously, since $h(n) \rightarrow \infty$ and $g_1,g_2 \in G$, $h_P(n) \rightarrow \infty$. Recall that $a_n<l_n<a_{n+1}$ for each $n \in \omega$ and consequently, $l_n \in I_n$ for $n \in \omega$. The definitions of $h_P$ and $h$ together with the condition (\ref{eq:l}) guarantee that
$$\frac{l_n}{h_P(l_n)}=\frac{l_n}{h(l_n)}=\frac{l_n}{h(a_n)}>a$$
whenever $n \in P$. Since $P$ is infinite, we may consider the sequence $(l_n)_{n \in P}$ and thus 
$$\limsup_{n \to \infty} \frac{n}{h_P(n)} \geq \limsup_{n \to \infty} \frac{l_n}{h_P(l_n)} \geq \limsup_{n \to \infty,n \in P} \frac{l_n}{h_P(l_n)}\geq a,$$
hence $h_P \in G$. By $h(a_n) \in \omega$ we get $h_P \in H$ and from $g_1 (a_n) \leq g_2(a_n)) \leq h(a_n) \leq  g_1(a_{n+1}) - 1 \leq g_2 (a_{n+1}) \leq h(a_{n+1})$ the monotonicity follows. 
\newline
The next step is to show that $\mathcal{Z}_{h_{P}},\mathcal{Z}_{h_U},\mathcal{Z}_f$ are incomparable whenever $P,U \in D, f \in A$. Let us fix an increasing enumeration $\{h_n : n \in \omega \} = P \setminus U$. We start with finding such $B \in \mathcal{Z}_{h_P}$ that 
$$
\frac{\card(B \cap (a_{h_n} + k_{h_n}))}{g_2(a_{h_n} + k_{h_n})} \not\to 0. 
$$
Once it is done, it will give us $\Z_{h_P} \not\subset \Z_f, \Z_{h_U}$ since $f(a_{h_n} + k_{h_n}),h_U (a_{h_n} + k_{h_n}) \leq g_2(a_{h_n} + k_{h_n})$. \\
We set $B:= \bigcup_{n \in \omega} [a_{h_n}, a_{h_n} + k_{h_n})$. Then 
$$
\frac{\card(B \cap (a_{h_n}+k_{h_n}))}{g_2(a_{h_n} + k_{h_n})} \geq
 \frac{k_{h_n}}{g_2(a_{h_n} + k_{h_n})} >a>0.
$$
On the other hand, for any $m \in \omega$ we may find such $n \in \omega$ that $m \in [a_{h_n} , a_{h_{n+1}})$, and we get
$$
\frac{\card(B \cap m)}{h_P(m)} \leq
\frac{\card(B \cap a_{h_{n+1}})}{h_P(a_{h_n})} \leq
\frac{a_{h_n} + k_{h_n}}{h_P(a_{h_n})} \leq \frac{1}{h_n} \to 0.
$$
We proved that $\Z_{h_P} \not\subset \Z_{h_U},\Z_f$. In fact, we also proved that $\Z_{h_U} \not\subset \Z_{h_P}$. However, the proof of the Theorem is not complete -- we still must show that $\Z_f \not\subset \Z_{h_P}$. Let us set $C:= \bigcup_{n \in \omega} (a_{h_n+1}-l_{h_n},a_{h_n+1}]$. Then
$$
\frac{\card(C \cap (a_{h_n +1}-1))}{h_P(a_{h_n +1}-1)} \geq
\frac{l_{h_n}-1}{h_P(a_{h_n +1}-1)} 
= \frac{l_{h_n}-1}{h_P(a_{h_n})}>a,
$$
so $C \notin \Z_{h_P}$. On the other hand, for any $m \geq a_{h_1}$ we may find such $n \in \omega$ that $m \in [a_{h_n +1}- l_{h_n} , a_{h_{n+1}+1} - l_{h_{n+1}})$. The inequality $f(m) \geq g_1 (m)$ is satisfied from some point on, thus for big enough $m$ we get
$$
\frac{\card(C \cap m)}{f(m)} \leq
\frac{\card(C \cap m)}{g_1 (m)} \leq
\frac{\card(C \cap (a_{h_{n+1}+1} - l_{h_{n+1}}))}{g_1(a_{h_n +1}- l_{h_n})} \leq
\frac{a_{h_n} + l_{h_n}}{g_1(a_{h_n +1}- l_{h_n})} \leq  \frac{1}{h_n} \to 0,
$$
so $C \in \Z_f$, thus $\Z_f \not\subset \Z_{h_P}$ and we are done.
\end{proof}
Now, we move to a more general preorder among ideals on $\omega$. Namely, we say that an ideal $\I$ is below an ideal $\J$ in the Kat\v{e}tov order ($\mathcal{I}\leq_{K}\mathcal{J}$) if there is such a function (not necessarily a bijection) $\phi\colon\omega\to\omega$ that $A\in\I$ implies $\phi^{-1}[A]\in\J$ for all $A\subseteq\omega$. A property of ideals can often be expressed by finding a critical ideal in the sense of $\leq_K$ with respect to this property (see e.g. \cite{WR}). This approach is very effective for instance in the context of ideal convergence of sequences of functions (see e.g. \cite{zReclawem} and \cite{zMarcinem}). Therefore, the structure of $\{\Z_g:g\in G\}$ ordered by $\leq_K$ can give us some information about the properties of simple density ideals.

\begin{thm}\label{antikate}
Among ideals of the form $\Z_g$, for $g\in H$, there exists an antichain in the sense of $\leq_{K}$ of size $\ce$.
\end{thm}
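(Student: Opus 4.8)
I would realise the antichain as $\{\Z_{g_P}:P\in D\}$ for a fixed maximal almost disjoint family $D\subseteq[\omega]^\omega$ with $\card(D)=\ce$ (see \cite[Theorem~5.35]{Buk}); for distinct $P,U\in D$ the sets $\omega\setminus P$, $P\setminus U$ and $U\setminus P$ are then infinite. Fix once and for all a partition of $\omega$ into consecutive intervals $(I_n)_{n\in\omega}$ of rapidly increasing length and a base pair $g_1\le g_2$ in $H^\uparrow$, as in the proof of Theorem~\ref{antyincl}, and for $P\in D$ let $g_P\in H^\uparrow$ be built by the same diagonal recipe used there for the functions $h_P$, except that the plateaus placed on the blocks $I_n$ with $n\in P$ are made much taller -- their heights growing with $n$ as fast as the constraint $g_P\in G$ allows (this is legitimate because $g_P\le g_2$ on the blocks $I_n$ with $n\notin P$, which already secures $n/g_P(n)\nrightarrow 0$). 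The construction provides short sub-intervals $J_n\subseteq I_n$ so that (i) $\bigcup_{n\in S}J_n\in\Z_{g_P}$ for every $S\subseteq P$ (the tall plateaus absorb them) and (ii) $\bigcup_{n\in S}J_n\notin\Z_{g_P}$ for every infinite $S\subseteq\omega\setminus P$ (there $g_P\le g_2$, and the $J_n$ are chosen, like the chunks $[a_n,a_n+k_n)$ in Theorem~\ref{antyincl}, so that a single full $J_n$ already produces a ratio bounded away from $0$). Properties (i)--(ii) are symmetric in $D$, so it suffices to prove $\Z_{g_P}\not\le_K\Z_{g_U}$ for one arbitrary ordered pair of distinct $P,U\in D$; running the argument with $P$ and $U$ interchanged then shows $\{\Z_{g_P}:P\in D\}$ is a $\le_K$-antichain of size $\ce$, so in particular the $\Z_{g_P}$ are pairwise non-isomorphic (isomorphic ideals being $\le_K$-equivalent).

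Fix distinct $P,U\in D$ and suppose $\phi\colon\omega\to\omega$ witnesses $\Z_{g_P}\le_K\Z_{g_U}$. The first step is to reduce to a finite-to-one $\phi$. Every $\Z_g$ with $g\in H^\uparrow$ is an analytic P-ideal: it equals $\on{Exh}(\varphi_g)$ for the lower semicontinuous submeasure $\varphi_g(X)=\sup_n\card(X\cap n)/g(n)$ (for nondecreasing $g$ this is the identification implicit in Lemma~\ref{Michal} and in the classical case $g=\on{id}$). Since every singleton is in $\Z_{g_P}$, every fibre $\phi^{-1}[\{k\}]$ lies in $\Z_{g_U}$; by the P-ideal property there is $D'\in\Z_{g_U}$ with $\phi^{-1}[\{k\}]\subseteq^{\star}D'$ for all $k$, so $\phi\upharpoonright(\omega\setminus D')$ is finite-to-one and, after re-enumerating $\omega\setminus D'$, still witnesses a Kat\v{e}tov reduction of $\Z_{g_P}$ into $\Z_{g_U}\upharpoonright(\omega\setminus D')$. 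As $\omega\setminus D'$ is co-$\Z_{g_U}$, the latter ideal is again a simple density ideal (its weight pointwise dominates $g_U$), so this replacement is harmless and we may assume $\phi$ is finite-to-one. Since $\phi$ witnesses $\Z_{g_P}\le_K\Z_{g_U}$ if and only if $\{Y:\phi[Y]\in\Z_{g_P}\}\subseteq\Z_{g_U}$, our task reduces to finding a set $Y\notin\Z_{g_U}$ with $\phi[Y]\in\Z_{g_P}$.

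To build such a $Y$ I would argue by cases on how badly $\phi$ collapses. If $\phi$ is close to an injection on the relevant scales (so that $\max\phi[m]$ is comparable to $m$), it is enough to push forward a set $Y_0$ witnessing $\Z_{g_P}\not\subseteq\Z_{g_U}$ -- these exist precisely by (i)--(ii), exactly as in Theorem~\ref{antyincl} -- choosing $Y_0$ inside long blocks $I_n$ with $n\in P\setminus U$, so that (most of) $\phi^{-1}[\phi[Y_0]]$ lives in the same blocks, where the tall plateau of $g_P$ absorbs $\phi[Y]$ while the base value $g_U$ does not absorb $Y$. If instead $\phi$ collapses heavily -- i.e.\ for infinitely many $m$ a short family of fibres carries at least $\varepsilon\,g_U(m)$ of the integers below $m$ (the extreme case being a single fat fibre) -- then, choosing such scales $m$ at which in addition $g_U(m)$ is small and $\phi[m]$ lands suitably with respect to the $P$-blocks, a pigeonhole argument produces a short list $K_m$ with $\card(\phi^{-1}[K_m]\cap m)\ge\varepsilon\,g_U(m)$; collecting the $K_m$ along a sparse sequence of such scales gives $Y:=\phi^{-1}\big[\bigcup_m K_m\big]$ with $\limsup_m\card(Y\cap m)/g_U(m)\ge\varepsilon$, hence $Y\notin\Z_{g_U}$, while $\bigcup_m K_m$ is sparse and sits in the $P$-plateaus, so $\phi[Y]\subseteq\bigcup_m K_m\in\Z_{g_P}$ by (i) and Lemma~\ref{Michal}. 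Either way $\{Y:\phi[Y]\in\Z_{g_P}\}\not\subseteq\Z_{g_U}$, contradicting $\Z_{g_P}\le_K\Z_{g_U}$; this proves the theorem, and the $\ce$ pairwise non-isomorphic ideals $\Z_g$ follow at once.

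The main obstacle is the construction: a single fixed $g_P$ must at once (a) satisfy the membership features (i)--(ii) robustly enough to survive the passage to restrictions $\Z_{g_U}\upharpoonright(\omega\setminus D')$ used in the first step, and (b) defeat \emph{every} finite-to-one $\phi$ via the case analysis above, with the plateau heights capped by the requirement $g_P\in G$, so that ``absorption'' of $\phi[Y]$ has to be arranged by spreading it along the fast-growing block scale rather than by unbounded plateaus. Meeting both demands forces an inductive bookkeeping considerably heavier than -- though modelled on -- that in the proof of Theorem~\ref{antyincl}; the fussiest point is showing that the two regimes ``nearly injective'' and ``heavily collapsing'' between them exhaust all finite-to-one $\phi$ and that in each the required scales $m$ can be aligned simultaneously with the $P$-blocks and the $U$-blocks.
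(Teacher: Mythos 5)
Your overall template matches the paper's: take an almost disjoint family of size $\ce$, attach to each member a weight function with ``plateaus'' on designated blocks, and, given a putative Kat\v{e}tov reduction $\phi$ from $\Z_{g_P}$ to $\Z_{g_U}$, produce a set $A\in\Z_{g_P}$ whose preimage is $\Z_{g_U}$-positive. But the heart of the proof is the case analysis on $\phi$, and there your proposal has a genuine gap that you yourself flag as ``the fussiest point'' without resolving it. The dichotomy ``nearly injective'' versus ``heavily collapsing'' classifies $\phi$ by the sizes of its fibres, whereas the actual obstruction is about where the images land. A $\phi$ with all fibres bounded (hence never heavily collapsing) can still take a plateau block $I$ of the target's weight and scatter it, in a completely order-scrambled and non-sparse way, across a mid-range region of the domain of $g_P$; then $\phi[Y_0]$ for your fixed witness $Y_0$ need not lie in $\Z_{g_P}$, and no fat fibres are available for the second regime. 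The paper's proof replaces your dichotomy with a trichotomy on image location (points of a plateau interval mapped above it, into a middle range, or below a short initial segment), proves by a counting argument that one alternative occurs on a positive fraction of infinitely many blocks, and resolves the middle alternative by a pigeonhole over sub-blocks of length $2\alpha(t_j)+1$ --- this is precisely why the technical function $\alpha$ is introduced. Your sketch contains no mechanism for this middle case, which is the only genuinely hard one.

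A secondary problem is the preliminary reduction to finite-to-one $\phi$ via the P-ideal property. The reduction itself is sound, but it replaces the target ideal by $\Z_{g_U}\upharpoonright(\omega\setminus D')$ for an uncontrolled $D'\in\Z_{g_U}$; the restricted ideal is still a simple density ideal, but its weight function no longer has the engineered plateau/base structure on which all of your quantitative claims (i)--(ii) rest, so every estimate would have to be re-proved for an arbitrary weight dominating a distorted copy of $g_U$. This detour is unnecessary: the paper handles arbitrary (not necessarily finite-to-one) $\phi$ directly, absorbing infinite and fat fibres into its third case, where the observation that $[0,l_{m+1})$ is finite (hence in $\Z_{f_M}$) forces the fat fibres to sit over arbitrarily large points, so that collecting them yields a sparse set of $\Z_{f_M}$ with non-small preimage. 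As written, your argument establishes the framework but not the theorem.
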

\begin{proof}
We will use a slight modification of the family of size $\ce$ used in the proof of \cite[Theorem 2.7]{Den}. Fix a family $\mathcal{F}$ of cardinality $\ce$ of infinite pairwise almost disjoint subsets of $\omega$ and such $\alpha:\omega \to \omega$ that $2\alpha(n+1) -1 > (2\alpha(n)+1)!$. Function $\alpha$ is a kind of technical complication introduced due to Case 2 in the sequel -- if one wants to understand the idea of the proof, it might be worthy to firstly read it with replacing $\alpha(i)$ by $i$ and noting where this case crushes (but the rest of our proof still works), and then read the complete proof again. For each $M\in\mathcal{F}$ let $(m_i)_{i\in\omega}$ be its increasing enumeration and define
$$f_M(n)=\left\{\begin{array}{ll}
(2\alpha(m_i))! & \textrm{if } (2\alpha(m_i)-1)!< n\leq (2\alpha(m_i)+1)!\textrm{ for }i\in\omega,\\
n & \textrm{otherwise.}
\end{array}\right.$$
It is easy to see that $f_M\in H$. 

Fix $K,M\in\mathcal{F}$. We will show that $\Z_{f_M}\not\leq_K\Z_{f_K}$. Suppose to the contrary that there is such $\phi\colon\omega\to\omega$ that $\phi^{-1}[A]\in\Z_{f_K}$ for all $A\in\Z_{f_M}$. 

Let $(k_i)_{i\in\omega}$ be an increasing enumeration of $K\setminus (M\cup\{0\})$. For each $i\in \omega$ denote $I_{i}=[(2\alpha(i)-1)!,(2\alpha(i)+1)!)$ and
$$B_{i}=\{n\in I_{i}:\ \phi(n)\geq (2\alpha(i)+1)!\},$$
$$C_{i}=\{n\in I_{i}:\ 2\alpha(i)+1\leq \phi(n)<(2\alpha(i)+1)!\},$$
$$D_{i}=\{n\in I_{i}:\ \phi(n)<2\alpha(i)+1\}.$$

Observe that $(2\alpha(i))!+\alpha(i)(2\alpha(i))!+\alpha(i)(2\alpha(i))!=(2\alpha(i)+1)!\leq 2\card(I_{i})$ whenever $\alpha(i)>1$. Therefore, at least one of the following three cases must happen. 

\textbf{Case 1.: }For infinitely many $i\in\omega$ we have $\card(B_{k_i})\geq (2\alpha(k_i))!/2$. Let $(t_j)_{j\in\omega}$ be an increasing enumeration of the set of $k_i$ with such property. We may additionally assume that $\max \phi[B_{t_j}]<(2\alpha(t_{j+1})+1)!$. For each $j\in\omega$ let $B'_j$ be any subset of $B_{t_j}$ of cardinality $(2\alpha(t_{j}))!/2$. Define $B=\bigcup_{j\in\omega}B'_j$. Then 
$$
\frac{\card(B \cap (2\alpha(t_j) +1)!)}{f_K ((2\alpha(t_j) +1)!)} \geq 
\frac{(2\alpha(t_{j}))!/2}{(2\alpha(t_j) )!} = \frac{1}{2},
$$
hence, $\phi^{-1}[\phi[B]]\supseteq B\notin\Z_{f_K}$. However, $\phi[B]\in\Z_{f_M}$. Indeed, for $n \in \omega$ we find such a $j \in \omega$ that $n \in [(2\alpha(t_j) +1)!,(2\alpha(t_{j+1}) +1)!)$ and then
$$
\frac{\card(\phi[B] \cap n)}{f_M (n)} \leq 
\frac{\card(\phi[B] \cap (2\alpha(t_{j+1}) +1)!)}{f_M ((2\alpha(t_j) +1)!)} \leq 
\frac{\sum_{i \leq j} (2\alpha(t_i))!/2}{(2\alpha(t_j) +1)!} \leq 
\frac{j (2\alpha(t_{j})-1)!+ (2\alpha(t_j))!}{2((2\alpha(t_j) +1)!)} \leq
$$
$$
\leq \frac{2\alpha(t_j) (2\alpha(t_{j})-1)!+ (2\alpha(t_j))!}{2((2\alpha(t_j) +1)!)}
= \frac{ 2(2\alpha(t_{j}))!}{2((2\alpha(t_j) +1)!)} = \frac{1}{2\alpha(t_j) + 1} \to 0.
$$
Thus, $B \notin \Z_{f_K}$ and $\phi(B) \in \Z_{f_M}$ -- a contradiction. 

\textbf{Case 2.: }For infinitely many $i\in\omega$ we have $\card(C_{k_i})>\alpha(k_i)(2\alpha(k_i))!/2$. Let $(t_j)_{j\in\omega}$ be an increasing enumeration of the set of $k_i$ with such property. We may additionally assume that $(j+1)^2 2(\alpha(t_j)+1)!<(2\alpha(t_{j+1})+1)$. For each $j\in\omega$ and $l \in \{ 1,2\ldots,(2\alpha(t_j))!-1 \}$ pick $e_{j,l}\in[l(2\alpha(t_j)+1),(l+1)(2\alpha(t_j)+1))$ which maximizes value $\card(\phi^{-1}[\{e_{j,l}\}]\cap I_{t_j})$. \\ 
Define $E_j=\{e_{j,l}:\ l\leq (2\alpha(t_j))!-1\}$ and $E=\bigcup_{j\in\omega}E_j$. Note that $\card(\phi^{-1}[E_j]\cap I_{t_j})\geq(2\alpha(t_j))!/8$. Indeed, otherwise we would have $\card(C_{t_j})\leq(2\alpha(t_j)+1)\card(\phi^{-1}[E_j]\cap I_{t_j})<(2\alpha(t_j)+1)(2\alpha(t_j))!/8<\alpha(t_j)2\alpha(t_j))!/2\leq\card(C_{t_j})$ (recall that $k_i>0$) -- a contradiction. Therefore, we obtain
$$
\frac{\card(\phi^{-1}[E]\cap [(2\alpha(t_j)+1)! -1])}{f_K((2\alpha(t_j)+1)!-1)} \geq
\frac{\card(\phi^{-1}[E_j]\cap [(2\alpha(t_j)+1)!-1])}{(2\alpha(t_j))!}\geq 
\frac{(2\alpha(t_j))!/8 - 1}{(2\alpha(t_j))!} \geq \frac{1}{16}
$$
for big enough $j$, so it follows that $\phi^{-1}[E]\notin\Z_{f_K}$. However, for any $n \in \omega$ we may fix such a $j \in \omega$ and such $l \in \{ 1,\ldots , (2\alpha(t_j))!-1\}$ that $n \in [l(2\alpha(t_j) +1),(l+1)(2\alpha(t_j) +1))$ or just such a $j \in \omega$ that $n \in [(2\alpha(t_j)+1)!, 2\alpha(t_{j+1})+1)$, so we consider two cases:
\begin{itemize}
 \item in the first one we get (note that $f_M = \on{id}$ on $[2\alpha(t_j)+1, (2\alpha(t_j)+1)!)$):
$$
\frac{\card(E\cap n)}{f_M(n)}\leq
\frac{\card(E\cap (l+1)(2\alpha(t_j) +1))}{f_M(l(2\alpha(t_j) +1))} =
\frac{l+\sum_{i<j}(2(\alpha(t_i))! - 1)}{l(2\alpha(t_j)+1)} \leq
$$
$$
\leq \frac{1}{2\alpha(t_j)+1}+\frac{j(2(\alpha(t_{j-1}))! - 1)}{l(2\alpha(t_j)+1)} <\frac{1}{\alpha(t_j)} +\frac{2\alpha(t_j) +1}{(j+1)l(2\alpha(t_j)+1)} \leq \frac{1}{\alpha(t_j)} + \frac{1}{j+1} \to 0;
$$
\item and in the second one:
$$
\frac{\card(E\cap n)}{f_M(n)}\leq 
\frac{\card(E\cap 2\alpha(t_{j+1})+1))}{f_M((2\alpha(t_j)+1)!)}=
\frac{\card(E\cap (2\alpha(t_{j})+1)!)}{f_M((2\alpha(t_j)+1)!)}
\leq 
$$
$$
\leq
\frac{\card(E\cap (2\alpha(t_{j})+1)!-1)+1}{f_M((2\alpha(t_j)+1)!-1)} \to 0
$$
by the previous case.
We proved that $E \in \Z_{f_M}$ and $\phi^{-1}(E) \notin \Z_{f_K}$ -- a contradiction. 
\end{itemize}

\textbf{Case 3.: }For infinitely many $i\in\omega$ we have $\card(D_{k_i})>\alpha(k_i)(2\alpha(k_i))!/2$. Let $T$ consist of all $k_i$ with such property. We inductively pick points $d_j\in\omega$, $l_{j}\in\omega$ and $t_{j}\in T$ for $j\in\omega$. We start with $d_0=l_0=t_0=0$. If all $d_j$ for $j\leq m$ are picked, then let us find such $l_{m+1}>d_{m}$ that $(m+3)/f_M(l_{m+1})<1/m$. The interval $[0,l_{m+1})$ is in $\Z_{f_M}$ as a finite set, so there must be such $t_{m+1}\in T$ that $\card(\{n\in D_{t_{m+1}}:\ \phi(n)\geq l_{m+1}\})>\alpha(t_{m+1})(2\alpha(t_{m+1}))!/4$. Indeed, otherwise $\phi^{-1}[[0,l_{m+1})]\notin\Z_{f_K}$, as 
$$
\frac{\card(\phi^{-1}[[0,l_{m+1})]\cap (2\alpha(t)+1)!-1)}{f_K (2\alpha(t)+1)!-1)} \geq 
\frac{\card(\phi^{-1}[[0,l_{m+1})]\cap I_t}{(2\alpha(t))!)} \geq
$$
$$
\geq \frac{\alpha(t)(2\alpha(t))!/2 - \alpha(t)(2\alpha(t))!/4}{(2\alpha(t))!}=
\alpha(t)/4
$$
for all $t\in T$. Now we fix such $d_{m+1}$ that $l_{m+1}\leq d_{m+1}<2\alpha(t_{m+1})+1$ and $\card(\phi^{-1}[\{d_{m+1}\}])\geq(2\alpha(t_{m+1}))!/12$. Note that this is possible, since otherwise we would have 
$$
\card(\{n\in D_{t_{m+1}}:\ \phi(n)\geq l_{m+1}\})
<(2\alpha(t_{m+1})+1-l_{m+1})(2\alpha(t_{m+1}))!/12
\leq 
$$
$$
\leq \alpha(t_{m+1})(2\alpha(t_{m+1}))!/6< \alpha(t_{m+1})(2\alpha(t_{m+1}))!/4< \card(\{n\in D_{t_{m+1}}:\ \phi(n)\geq l_{m+1}\}),
$$ a contradiction. 

Define $D=\{d_j:\ j\in\omega\}$. Then $D\in\Z_{f_M}$, since whenever $n\in [l_m, l_{m+1})$, we get 
$$
\frac{\card(D\cap n)}{f_M(n)}\leq
\frac{\card(D\cap l_{m+1})}{f_M(l_m)} \leq 
\frac{m+2}{f_M(l_m)}< \frac{1}{m} \to 0.
$$
However, $\phi^{-1}[D]\notin\Z_{f_K}$ since 
$$
\frac{\card(\phi^{-1}[D]\cap (2\alpha(t_m)+1)!-1))}{f_K((2\alpha(t_m)+1)!-1)}=
\frac{\card(\phi^{-1}[D]\cap (2\alpha(t_m)+1)!-1))}{(2\alpha(t_m))!} \geq
\frac{(2\alpha(t_m))!/12}{(2\alpha(t_m))!} =12
$$ for all $m\in\omega$.
\end{proof}

Recall that ideals $\I$ and $\J$ are isomorphic ($\I\cong\J$) if there is such a bijection $\phi\colon\omega\to\omega$ that $A\in\I\Longleftrightarrow\phi^{-1}[A]\in\J$ for all $A\subseteq\omega$. Since every pair of isomorphic ideals is comparable in the sense of Kat\v{e}tov order, we get the following.

\begin{cor}
\label{non-isomorphic}
There are $\ce$ many non-isomorphic ideals of the form $\Z_g$, for $g\in G$.
\end{cor}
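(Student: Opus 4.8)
The plan is to read this off Theorem \ref{antikate} together with the standard fact that isomorphic ideals are always comparable in the Katětov order. First I would record that observation precisely: if $\phi\colon\omega\to\omega$ is a bijection witnessing $\I\cong\J$, then for every $A\in\I$ we have $\phi^{-1}[A]\in\J$, which is exactly the statement $\I\leq_K\J$; applying the same to the bijection $\phi^{-1}$ gives $\J\leq_K\I$. Hence any two isomorphic ideals are $\leq_K$-comparable, so an antichain with respect to $\leq_K$ cannot contain two distinct isomorphic ideals; equivalently, all members of such an antichain are pairwise non-isomorphic.

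Next I would invoke Theorem \ref{antikate}, which provides a family $\{\Z_{f_M} : M\in\mathcal{F}\}$ with $\card(\mathcal{F})=\ce$, consisting of ideals of the form $\Z_g$ with $g\in H\subseteq G$, that is an antichain in the sense of $\leq_K$. Combining this with the previous paragraph, these $\ce$ many ideals $\Z_g$ are pairwise non-isomorphic. Finally, for optimality one notes (as already remarked in the paper) that $\card(\{\Z_g : g\in G\})\leq\card(G)\leq\card(\R^\omega)=\ce$, so the number of non-isomorphic ideals of the form $\Z_g$ is exactly $\ce$.

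There is essentially no obstacle here: all the real work has been carried out in the construction and case analysis of Theorem \ref{antikate}, and the present argument only packages that result, using that the Katětov preorder is coarser than the isomorphism relation. The one point worth stating explicitly, rather than leaving implicit, is the direction $\I\cong\J\Rightarrow\I\leq_K\J$, since it is what makes a Katětov antichain automatically an "isomorphism antichain".
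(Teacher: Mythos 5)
Your proposal is correct and matches the paper's argument exactly: the paper also deduces the corollary from Theorem \ref{antikate} together with the remark that isomorphic ideals are comparable in the Kat\v{e}tov order, so a $\leq_K$-antichain of size $\ce$ yields $\ce$ pairwise non-isomorphic ideals $\Z_g$.
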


Note also that Theorem \ref{antyincl} generalizes \cite[Theorem 2.7]{Den}, however, Theorem \ref{antikate} is not stronger than any of them. Indeed, \cite[Theorem 2.7]{Den} gives an example of such antichain of size $\ce$ that all its elements are incomparable with classical density ideal and Theorem \ref{antyincl} allows us to omit in such a way any family of cardinality $<\mathfrak{b}$, while Theorem \ref{antikate} does not provide us with an antichain of elements incomparable with the classical density ideal. Now we will prove that such generalization cannot be obtained as the poset $(\{\Z_g: g \in G\}, \leq_K)$ has the biggest element. We start with the following Lemma about inclusions: 

\begin{lem}
\label{liminf}
For $f \in G$ we have $\Z_f\subseteq\Z$ if and only if $\liminf_{n\to\infty}\frac{n}{f(n)}>0$.
\end{lem}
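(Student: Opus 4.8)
The plan is to prove both implications directly from the definitions, using Proposition~\ref{rown}-style comparisons together with Lemma~\ref{Michal} to move between counting along all of $\omega$ and counting along an enumeration of a set.

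\textbf{The easy direction ($\Leftarrow$).} Suppose $\liminf_{n\to\infty}\frac{n}{f(n)}=c>0$. Then for all but finitely many $n$ we have $f(n)\leq \frac{2}{c}n$, so for any $A\subseteq\omega$ and large $n$,
$$
\frac{\card(A\cap n)}{n}=\frac{\card(A\cap n)}{f(n)}\cdot\frac{f(n)}{n}\leq \frac{2}{c}\cdot\frac{\card(A\cap n)}{f(n)}.
$$
Hence if $A\in\Z_f$, i.e. $\card(A\cap n)/f(n)\to 0$, then $\overline d(A)=0$, so $A\in\Z$. Thus $\Z_f\subseteq\Z$.

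\textbf{The harder direction ($\Rightarrow$).} I would prove the contrapositive: assume $\liminf_{n\to\infty}\frac{n}{f(n)}=0$ and construct a set $A\in\Z_f\setminus\Z$. By assumption there is an increasing sequence $(n_k)$ with $\frac{n_k}{f(n_k)}\to 0$, i.e. $\frac{f(n_k)}{n_k}\to\infty$; passing to a subsequence we may assume $f(n_k)\geq k\cdot n_k$ and, since $f\to\infty$, also that the blocks $[n_k,2n_k)$ are pairwise disjoint and $n_{k+1}$ is much larger than $2n_k$. Set $A=\bigcup_k [n_k,2n_k)$. Then along the points $m=2n_k$ we have $\card(A\cap 2n_k)\geq n_k$, so $\card(A\cap 2n_k)/(2n_k)\geq 1/2$, giving $\overline d(A)\geq 1/2$, hence $A\notin\Z$. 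For $A\in\Z_f$, one uses Lemma~\ref{Michal}: enumerate $A=\{a_0<a_1<\ldots\}$ and estimate $\card(A\cap a_j)/f(a_j)$; the point is that the right endpoint $2n_k$ of the $k$-th block is one of the $a_j$'s, and at such a point $\card(A\cap 2n_k)\leq n_1+\cdots+n_k \leq (\text{something controlled})$, which after dividing by $f(a_j)\geq f(n_k)\geq k n_k$ tends to $0$ provided the $n_k$ grow fast enough (e.g. $\sum_{i<k}n_i \leq n_k$, achievable by the inductive choice). Between block endpoints the counting function of $A$ stays constant while $f$ is nondecreasing, so the ratio only decreases; thus by Lemma~\ref{Michal} $A\in\Z_f$. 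Finally, note we may assume $f\in H^\uparrow$ by Proposition~1 together with $\Z_f=\Z_{\lfloor f\rfloor}$, and the condition $\liminf n/f(n)$ is unaffected by bounded perturbations.

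\textbf{Main obstacle.} The only delicate point is calibrating the growth of $(n_k)$ and the block lengths so that simultaneously (a) the density of $A$ along $\omega$ stays bounded away from $0$ (so $A\notin\Z$) and (b) the count $\card(A\cap a_j)$ at each relevant point $a_j$ is small compared with $f(a_j)$ (so $A\in\Z_f$ via Lemma~\ref{Michal}). Since we have the freedom to pass to a subsequence of $(n_k)$ making $f(n_k)/n_k\to\infty$ as fast as we like and to make $\sum_{i<k}n_i$ negligible relative to $n_k$ and to $f(n_k)$, this is a routine inductive bookkeeping rather than a genuine difficulty; I would organize it as an explicit recursive construction of $n_k$ with the two inequalities built in at each step.
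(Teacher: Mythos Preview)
Your proposal is correct and follows essentially the same route as the paper: the ``easy'' direction is identical, and for the contrapositive you build the very same set $A=\bigcup_k [n_k,2n_k)$ along a sequence with $f(n_k)\gg n_k$.

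Two small remarks. First, your detour through Lemma~\ref{Michal} and the extra requirement $\sum_{i<k}n_i\le n_k$ are unnecessary: the paper simply observes that for any $n\in[m_k,m_{k+1})$ one has $\card(A\cap n)\le \card(A\cap m_k)+\card(A\cap[m_k,2m_k))\le m_k+m_k=2m_k$, so $\card(A\cap n)/f(n)\le 2m_k/f(m_k)<2/k$ directly, with only the condition $m_{k+1}>2m_k$. Second, your justification for reducing to $f\in H^\uparrow$ is not quite right: Proposition~1 does not assert that the nondecreasing $g$ with $\Z_f=\Z_g$ is a bounded perturbation of $f$, so ``$\liminf n/f(n)$ is unaffected by bounded perturbations'' does not cover that step. (The paper's proof also tacitly uses monotonicity of $f$ in the inequality $f(n)\ge f(m_k)$, so this is a shared wrinkle rather than a flaw specific to your argument.)
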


\begin{proof}
($\Rightarrow$): Suppose to the contrary that $\Z_f \subseteq \Z$, $\liminf_{n\to\infty}\frac{n}{f(n)}=0$, and find such an increasing sequence $(m_k)$ that $ m_k/f(m_k)<1/k$ and $m_{k+1}>2m_k$. Consider the set $A=\bigcup_{k\in\omega}[m_k,2m_k)$. We have $\card(A\cap 2m_k)/(2m_k)\geq 1/2$, thus $A \notin \Z$. On the other hand, for any $n \in \omega$ we may find such $k \in \omega$ that $m_k\leq n<m_{k+1}$, thus 
$$
\frac{\card(A\cap n)}{f(n)}\leq\frac{\card(A\cap n)}{f(m_k)}<\frac{m_k + \card(A \cap [m_k,2m_k))}{f(m_k)}\leq\frac{2m_k}{f(m_k)} \leq \frac{2m_k}{k m_k} = \frac{2}{k} \to 0.
$$ 
Hence, $A\in\Z_f$ -- a contradiction with $A \notin \Z_f$.

($\Leftarrow$): Fix $A\in\Z_f$, $\varepsilon>0$ and let $\delta:=\liminf_{n\to\infty}\frac{n}{f(n)}>0$. There is such $n_0$ that $n/f(n)>\delta/2$ for all $n>n_0$. There is also such $n_1$ that $\card(A\cap n)/f(n)<\varepsilon\delta/2$ for all $n>n_1$. Then for each $n>\max\{n_0,n_1\}$ we have 
$$
\frac{\card(A\cap n)}{n}=\frac{1}{\delta}\frac{\card(A\cap n)}{n/\delta}<\frac{1}{\delta}\frac{\card(A\cap n)}{f(n)/2}= \frac{2}{\delta}\frac{\card(A\cap n)}{f(n)}<\varepsilon,
$$
thus $A \in \Z$. 
\end{proof}
Recall that $\I \subseteq \J$ implies $\I \leq_K \J$ with identity function witnessing this fact. We are ready to prove the next Theorem.

\begin{thm}
\label{Zg-K-Z}
If $\I$ is a simple density ideal, then $\I\leq_{K}\Z$.
\end{thm}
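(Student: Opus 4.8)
The plan is a single explicit construction of a Kat\v{e}tov reduction, after a routine normalization. By the Proposition quoted in the Introduction, by $\Z_f=\Z_{\lfloor f\rfloor}$, by Proposition \ref{rown}, and since altering a weight on finitely many coordinates changes nothing, I may assume $\I=\Z_f$ for some $f\in H^\uparrow$ with $f(n)\geq 1$ for all $n$. (If $\liminf_n n/f(n)>0$ one could finish at once: $\Z_f\subseteq\Z$ by Lemma \ref{liminf}, so $\Z_f\leq_{K}\Z$ via the identity; but the construction below needs no case split.)

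Since $f\in G$ we have $a:=\limsup_n\frac{n}{f(n)}>0$; fix a real $C>1/a$, so that $S:=\{n\geq 1:\ f(n)<Cn\}$ is infinite. Let $D_t:=[2^t,2^{t+1})$ for $t\in\omega$; together with $\{0\}$ these partition $\omega$. Let $t_0$ be least with $S\cap[1,2^{t_0}]\neq\emptyset$, and for $t\geq t_0$ put $v_t:=\max\bigl(S\cap[1,2^t]\bigr)$, so that $v_t\leq 2^t$, $f(v_t)<Cv_t$, and $v_t\to\infty$ (because $S$ is unbounded). Define $\phi\colon\omega\to\omega$ arbitrarily on the finite set $[0,2^{t_0})$ and, for each $t\geq t_0$, let $\phi\upharpoonright D_t$ map $D_t$ \emph{onto} the initial segment $[0,v_t)$ with all fibres of cardinality at most $\lceil 2^t/v_t\rceil\leq 2\cdot 2^t/v_t$ (possible since $v_t\leq 2^t=\card(D_t)$). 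The point — and the step I expect to be least obvious — is exactly that every block is mapped onto $[0,v_t)$, so the images overlap heavily, with tiny almost-equal fibres, and that $v_t$ is chosen as large as possible subject to lying in $S$ and inside $D_t$.

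Now the verification that $A\in\Z_f\Rightarrow\phi^{-1}[A]\in\Z$. Fix $A\in\Z_f$; then $\card(A\cap v_t)/f(v_t)\to 0$ (as $v_t\to\infty$), and since $f(v_t)<Cv_t$ this yields $\card(A\cap v_t)/v_t\to 0$. As $\phi^{-1}[A]\cap D_t$ is the union of at most $\card(A\cap v_t)$ fibres of $\phi\upharpoonright D_t$, each of size $\leq 2\cdot 2^t/v_t$, we get $\delta_t:=\card(\phi^{-1}[A]\cap D_t)/2^t\leq 2\card(A\cap v_t)/v_t\to 0$. Since $N\mapsto\card(\phi^{-1}[A]\cap N)$ is nondecreasing and $2^t\leq N<2^{t+1}$ forces $\card(\phi^{-1}[A]\cap N)/N\leq 2\cdot\card(\phi^{-1}[A]\cap 2^{t+1})/2^{t+1}$, it suffices to estimate $\card(\phi^{-1}[A]\cap 2^{t+1})\leq K_0+\sum_{s\leq t}\delta_s 2^s$ (a constant $K_0$ absorbing $\{0\}$ and the blocks below $t_0$): a routine Ces\`aro-type estimate (split the sum at an index past which $\delta_s<\varepsilon$ and use $\sum_{s\leq t}2^s<2^{t+1}$) gives $\limsup_t\card(\phi^{-1}[A]\cap 2^{t+1})/2^{t+1}\leq\varepsilon$ for every $\varepsilon>0$. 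Hence $\phi^{-1}[A]\in\Z$, i.e. $\Z_f\leq_{K}\Z$.

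The main obstacle is designing $\phi$: a nondecreasing $\phi$ with interval fibres fails, because a block $D_t$ with value in $A$ then dumps a whole interval into $\phi^{-1}[A]$; a finite-to-one $\phi$ fails similarly. Spreading each $D_t$ evenly over $[0,v_t)$ dilutes $A$, and the inequality $f(v_t)<Cv_t$ — which is exactly what $f\in G$ supplies via $\limsup_n n/f(n)>0$ — together with $v_t\leq 2^t$ is precisely what makes $\card(A\cap v_t)/v_t\to 0$ while keeping fibres of size $O(2^t/v_t)$. Crucially one never needs $f$ to be small at every scale, only that $S$ is infinite, so that the slowly growing sequence $(v_t)$ can be chosen inside $S$.
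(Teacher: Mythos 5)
Your proof is correct, and the core idea is the same as the paper's: exploit $\limsup_n n/f(n)>0$ to find infinitely many points $v$ with $f(v)$ commensurable with $v$, partition $\omega$ into blocks, and map each block onto a prefix $[0,v)$ via nearly-equal fibres so that each block is diluted. The bookkeeping differs. The paper builds blocks $[k_m,k_{m+1})$ whose left endpoints $k_m$ are themselves the ``good'' points and additionally imposes $k_m/k_{m+1}\to 0$, mapping $[k_m,k_{m+1})$ onto $[0,k_m)$ by $n\mapsto n\bmod k_m$; the rapid growth then kills the contribution of all earlier blocks in a direct three-term estimate, with no averaging step needed. You instead fix canonical dyadic blocks $D_t=[2^t,2^{t+1})$ and let the target length $v_t$ be the largest good point $\leq 2^t$; since old blocks are not negligible relative to $2^{t+1}$, you pay for the simpler block structure with the Ces\`aro step $\sum_{s\le t}\delta_s 2^s \le C_T+\varepsilon 2^{t+1}$. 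Both routes are of comparable length; yours avoids constructing the sequence $(k_m)$ with the double constraint, while the paper's avoids the averaging lemma. One small remark: your closing discussion of why nondecreasing or finite-to-one $\phi$ cannot work is instructive but slightly overstates the obstruction --- the paper's $\phi$ is in fact claimed (in the Remark after the theorem) to admit a finite-to-one variant by shifting the target one block down, $\phi([k_m,k_{m+1}])\subset[k_{m-1},k_m)$; what truly fails is monotone $\phi$ with interval fibres, not finite-to-one-ness per se.
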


\begin{proof}
Fix such $f \in H^\uparrow$ that $\I = \Z_f$. If $\liminf_{n\to\infty}\frac{n}{f(n)}>0$, then we are done by Lemma \ref{liminf}, so we may assume that $\liminf_{n\to\infty}\frac{n}{f(n)}=0$.\\
Set $\delta:=(\limsup_{n\to\infty}\frac{n}{f(n)})/2$ (recall that $\limsup_{n\to\infty}\frac{n}{f(n)}>0$ for each $f\in G$) and let $(k_m)$ be such an increasing sequence that $k_m\geq\delta f(k_m)$ and $\lim_{m\to\infty}k_m/k_{m+1}=0$. Now for any $n >k_0$ we may find such $m_n,j_n \in \omega$ that $n \in [k_{m_n}, k_{m_n +1})$ and $n \in [j_n k_{m_n}, (j_n+1)k_{m_n})$. Now we set 
$$\phi(n)=\left\{\begin{array}{ll}
n & \textrm{if } n\leq k_0,\\
n-j_n k_{m_n} & \textrm{otherwise.}
\end{array}\right.$$
We will show that $\phi$ is as required.\\
Fix $A\in\Z_f$ and $\varepsilon>0$. There is such $l_0\in\omega$ that $k_m/k_{m+1}<\varepsilon/3$ for all $m\geq l_0$. 
There is also such $l_1\in\omega$ that $\card(A\cap k_m)/f(k_m)<\varepsilon\delta/3$ for all $m\geq l_1$. 
Consider such $n\in\omega$ that $m_n>\max\{l_0,l_1\}$. Then
$$\frac{\card(\phi^{-1}[A]\cap n)}{n}\leq\frac{\card(\phi^{-1}[A]\cap n\setminus k_{m_n})}{n}+\frac{\card(\phi^{-1}[A]\cap k_{m_n}\setminus k_{m_n-1})}{n}+\frac{k_{m_n-1}}{n} \leq$$
$$\leq\frac{\card(A\cap k_{m_n})}{n}j_n+\frac{\card(A\cap k_{m_n-1})}{n}j_{k_{m_n}-1}+\frac{k_{m_n-1}}{n}\leq$$
$$
 \leq \frac{\card(A\cap k_{m_n})}{f(k_{m_n})}\frac{f(k_{m_n})}{k_{m_n}}\frac{k_{m_n}}{n}j_n+\frac{\card(A\cap k_{m_n-1})}{f(k_{m_n-1})}\frac{f(k_{m_n-1})}{k_{m_n-1}}\frac{k_{m_n-1}}{k_{m_n}-1}j_{k_{m_n}-1}+\frac{k_{m_n-1}}{k_{m_n}}<
 $$
$$
<\frac{\varepsilon\delta}{3}\frac{1}{\delta}1+\frac{\varepsilon\delta}{3}\frac{1}{\delta}1+\frac{\varepsilon}{3}=\varepsilon.$$
Therefore, $\phi^{-1}[A]\in\Z$.
\end{proof}
\begin{Rem}
There are many other preorders on the set of ideals on $\omega$. One of them is the Kat\v{e}tov-Blass preorder. We say that $\I \leq_{KB} \J$ if there exists such $\phi: \omega \to \omega$ that $\phi^{-1}(A) \in J $ whenever $A \in \I$, and $\phi$ is finite-to-one, i.e., preimages of singletons are finite. Since $\I \leq_{KB} \J \implies \I \leq_K \J$, Theorem \ref{antikate} holds true for $\leq_{KB}$. In fact, Theorem \ref{Zg-K-Z} also holds -- one may provide a similar construction with $\phi([k_m, k_{m+1}]) \subset [k_{m-1} , k_m)$.
\end{Rem}
\begin{Rem}
Observe that implication from Theorem \ref{Zg-K-Z} cannot be reversed -- it is known that the ideal $\I_{1/n}=\{A\subseteq\omega:\ \sum_{a\in A}1/a<\infty\}$ is contained in $\Z$ (hence, $\I_{1/n}\leq_{KB} \Z$ and $\I_{1/n}\leq_K \Z$), but it is not a simple density ideal (it is $\mathbf{F}_{\sigma}$ by \cite[Example 1.2.3.(c)]{Farah} and all simple density ideals are not $\mathbf{F}_{\sigma}$ by \cite[Corollary 3.5.]{Den} or Proposition \ref{Fsigma}).
\end{Rem}

\section{simple density ideals, Farah's density ideals and Erd\H{o}s-Ulam ideals}
\label{sekGD+FD+EU}
In \cite[Theorem 3.2]{Den} it has been proved that each simple density ideal is a density ideal in the sense of Farah. Within the second class we may distinguish a family of Erd\H{o}s-Ulam ideals (EU ideals in short). This Section is devoted to the investigation of the inclusions between those three families, so we start with recalling the necessary definitions. 

We say that an ideal $\I$ on $\omega$ is a density ideal in the sense of Farah (or Farah's density ideal) if there exists such a sequence $(\mu_n)$ of measures on $\omega$ that their supports are finite, pairwise disjoint and 
$$
\I:= \{ A \subset \omega: \lim_{n \to \infty} \sup_{m \in \omega} \mu_m ( A \setminus n) = 0 \}.
$$
In such a case we say that $\I$ is generated by the sequence $(\mu_n)$. For more information on ideals defined by measures and submeasures see e.g. \cite{Farah} and \cite{Solecki}.\\
The second important class of ideals are Erd\H{o}s-Ulam ideal (or EU ideals). Namely, those are ideals of the form
$$
 \mathcal{EU}_f=\left\{ A \subset \omega\colon \lim_{n \to \infty} \frac{\sum_{i \in n \cap A}f(i)}{{\sum}_{i\in n}f(i)}=0\right\},
$$
where $f\colon \omega \to[0, \infty)$ satisfies ${\sum}_{n\in\omega}f(n) = \infty$. EU ideals were introduced in \cite{Just} by Just and Krawczyk, where they solved a question raised by Erd\H{o}s and Ulam. The following Theorem outlines the connections between those two classes of ideals.
\begin{thm}\cite[Theorem 1.13.3]{Farah}  \label{density-is-eu}
\begin{itemize}
\item[(a)] Each EU ideal $\I$ is a density ideal in the sense of Farah and there exists a sequence of probability measures which generates $\I$.
\item[(b)] Let $\I$ be a density ideal generated by a sequence of measures $(\mu_n)$. Then $\I$ is an Erd\H{o}s-Ulam ideal if and only if the following conditions hold:
\begin{itemize}
\item[(D1)] $\sup_{n\in\omega} \mu_n(\omega) < \infty$,
\item[(D2)] $\lim_{n\to\infty} \sup_{i\in\omega} \mu_n(\{i\}) = 0$,
\item[(D3)] $\limsup_{n\to\infty} \mu_n(\omega) > 0$. \qed
\end{itemize}
\end{itemize}
\end{thm}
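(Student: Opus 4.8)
The statement falls into two fairly independent halves: part (a) together with the ``only if'' part of (b) amount to \emph{recognizing} the measure-theoretic shape of an Erd\H{o}s--Ulam ideal, whereas the ``if'' part of (b) is a \emph{construction} of a weight $f$ out of abstract measure data. The plan is to treat (a) first, then to derive the ``if'' direction of (b) by a parallel but more delicate construction, and finally to obtain the ``only if'' direction of (b) by combining (a) with a uniqueness principle for representations of density ideals.

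For (a), I would start from $\mathcal{EU}_f$ with $\sum_{n}f(n)=\infty$ and partition $\omega$ into consecutive intervals $I_k=[m_k,m_{k+1})$, $m_0=0$, chosen so that the cumulative sums $F(m_k):=\sum_{i<m_k}f(i)$ grow geometrically, say $2F(m_k)\le F(m_{k+1})\le 3F(m_k)$ for all large $k$; this is possible because $\sum f=\infty$, provided $f$ is bounded, so the genuine first step is the reduction to bounded $f$ (heavy atoms of $f$ must be absorbed into their blocks, and one has to check this does not alter $\mathcal{EU}_f$ -- the one delicate point of (a)). Define $\mu_k$ as the restriction of $f$ to $I_k$ renormalized to a probability measure; the supports are finite and pairwise disjoint, so $(\mu_k)$ generates a density ideal in the sense of Farah. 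To identify it with $\mathcal{EU}_f$: if $A$ lies in the generated ideal then $\sum_{i\in A\cap I_k}f(i)=o(F(m_{k+1}))$, and telescoping over blocks plus the geometric growth of $F(m_k)$ give $\sum_{i\in A,\,i<N}f(i)=o(F(N))$, i.e.\ $A\in\mathcal{EU}_f$; conversely $\sum_{i\in A,\,i<N}f(i)\ge\sum_{i\in A\cap I_k}f(i)$ for $N$ past block $k$, so $A\in\mathcal{EU}_f$ forces the per-block densities to $0$. Note also $\sup_i\mu_k(\{i\})\le(\sup f)/\bigl(\sum_{i\in I_k}f(i)\bigr)\to 0$ since the block sums tend to infinity -- this will be needed for part (b).

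For the ``if'' direction of (b), assume $\I$ is generated by $(\mu_n)$ satisfying (D1)--(D3). First I would clean up the generating sequence: blocks of vanishing mass impose no constraint and can be merged into neighbours, so after relabelling one may assume $c\le\mu_n(\omega)\le C$ for fixed $0<c\le C$ (using (D1) and (D3)), that the supports $S_n$ are consecutive intervals covering $\omega$, and -- since merging only takes maxima of per-block atom sizes -- that (D2) persists. The second and main step is to produce $f$ with $\mathcal{EU}_f=\I$. The naive choice $f(i):=\mu_{n(i)}(\{i\})$ does \emph{not} work, because $\mathcal{EU}_f$ averages the per-block densities over arbitrarily many blocks whereas $\I$ controls each block separately; the remedy is to pass to a \emph{coarser} block structure (group the original blocks into super-blocks whose cumulative $f$-masses grow geometrically) and weight the blocks so that within each super-block the weighted average of per-block densities vanishes exactly when each per-block density does. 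Verifying $\mathcal{EU}_f=\I$ for the resulting $f$ is where all three conditions enter: (D1) makes the cumulative sums comparable after coarsening, (D2) lets a per-super-block density behave like a genuine limit and thereby rules out the sparse-block sets that otherwise separate $\mathcal{EU}_f$ from $\I$, and (D3) yields $\sum f=\infty$ and $\omega\notin\I$. This verification is the step I expect to be the main obstacle of the whole argument.

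For the ``only if'' direction of (b), suppose $\I$ is generated by $(\mu_n)$ and $\I=\mathcal{EU}_f$. By part (a), $\I$ also admits a generating sequence $(\nu_k)$ of probability measures with $\sup_i\nu_k(\{i\})\to 0$, for which (D1)--(D3) hold trivially. It then suffices to invoke the uniqueness principle for density ideals: any two sequences of finitely supported measures generating the same density ideal are equivalent, in the sense that the induced fragmentations of $\omega$ and the corresponding masses agree up to a bounded, uniformly controlled perturbation (see \cite[\S1.13]{Farah}). Transporting (D1)--(D3) across such an equivalence gives them for the original $(\mu_n)$; the only work here is to state and apply the uniqueness result in the precise form required.
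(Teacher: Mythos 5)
This statement is not proved in the paper at all: it is quoted verbatim, with a \qed\ attached to the statement itself, as \cite[Theorem 1.13.3]{Farah}. There is therefore no ``paper's own proof'' to compare your attempt against --- the paper treats the result as a black box imported from Farah's memoir, and everything downstream (Propositions \ref{EU}, \ref{EU2} and the surrounding discussion) uses only the statement. So the fair thing to say is that you are supplying a proof sketch where the paper intentionally supplies none.

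Judged on its own merits, your sketch captures the right high-level architecture, but it is not a proof. Part~(a) is essentially sound: the block decomposition with geometrically growing cumulative sums $F(m_k)$, normalization of $f\!\upharpoonright\! I_k$ to a probability measure, and the two-sided comparison between per-block densities and cumulative averages all go through as you indicate (the backward direction is a weighted average with geometric weights, which tends to $0$). However, the ``reduction to bounded $f$'' that you flag as the delicate point is genuinely needed for the conclusion $\sup_i\mu_k(\{i\})\to 0$, and you do not carry it out; one needs a Just--Krawczyk-type normalization lemma here. The ``if'' direction of (b) is where your proposal has a real gap: you correctly observe that the naive $f(i):=\mu_{n(i)}(\{i\})$ fails because $\mathcal{EU}_f$ averages across blocks while $\I$ controls each block separately, but the proposed remedy (``coarser super-blocks weighted so that the weighted average vanishes iff each per-block density does'') is a description of what a fix would have to accomplish, not a construction; you yourself call it the main obstacle. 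The ``only if'' direction leans entirely on a ``uniqueness principle for representations of density ideals'' that you cite from Farah's \S1.13 without stating it precisely; this is legitimate as a pointer but it is doing all the work, and until it is stated and applied the argument is circular with respect to what you are trying to establish. In short: the plan is reasonable and matches the structure of Farah's actual proof, but the two places you flag as hard are exactly the places where the proof is missing, so this should be regarded as an outline rather than a proof.
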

It is worth to mention that Farah's density ideal $\I$ is tall (i.e., each infinite set in $\omega$ contains an infinite subset which belongs to $\I$) if and only if the condition $(D2)$ holds (for a detailed explanation see \cite[Proposition 3.4]{Den}). Thus, any sequence of measures which generates a simple density ideal satisfies this condition. Moreover, the condition $(D3)$ is equivalent to $\omega \notin \I$, so it also must be satisfied whenever $\I$ is a simple density ideal. Note that a similar discussion, in a bit more detailed way, may be found in \cite[Section 3]{Den}, however, we repeated it for the convenience of the Reader.

We can modify \cite[Theorem~3.2]{Den} a bit to obtain the following.

\begin{prop}\label{densities}(Essentially \cite[Theorem~3.2]{Den})
Suppose that $g\in H$. Let $n_0=0$ and $n_k=\min\{n\in\omega:g(n)\geq 2^k\}$ for $k>0$. Then $\Z_g$ is a density ideal generated by the sequence $(\mu_k)_{k\in\omega}$ of measures given by
$$\mu_k(A)=\frac{\card(A\cap [n_k,n_{k+1}))}{g(n_k)} .$$ \qed
\end{prop}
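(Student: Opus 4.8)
The plan is to prove that $\Z_g$ coincides with the Farah density ideal $\J:=\{A\subseteq\omega:\ \lim_{N\to\infty}\sup_{m\in\omega}\mu_m(A\setminus N)=0\}$ generated by the given sequence $(\mu_k)$; here one uses that $g$ is nondecreasing (as in \cite[Theorem~3.2]{Den}), which the argument below genuinely needs. First I would dispose of the bookkeeping about $(n_k)$: since $g$ is nondecreasing with $g(n)\to\infty$, the sequence $(n_k)_{k\ge0}$ is nondecreasing, $n_0=0$, and $n_k\to\infty$; hence the sets $I_k:=[n_k,n_{k+1})$ are finite, pairwise disjoint and cover $\omega$ (some $I_k$ may be empty, in which case $\mu_k$ is the zero measure and may be dropped), so $(\mu_k)$ is a legitimate generating sequence. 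The two facts that carry everything are: \emph{(a)} $g(n_k)\le g(n)$ for all $n\in I_k$ (monotonicity), and \emph{(b)} for every $k$ with $I_k\ne\emptyset$ one has $2^k\le g(n_k)<2^{k+1}$ — the lower bound is the definition of $n_k$, and the upper bound holds precisely because $n_k<n_{k+1}$. Thus on each nonempty block $I_k$ the function $g$ stays between $g(n_k)$ and $2\,g(n_k)$.

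To get $\Z_g\subseteq\J$ I would take $A\in\Z_g$, so $\card(A\cap n)/g(n)\to0$, and fix $N$. For every $m$ with $\mu_m(A\setminus N)\ne0$ the block $I_m$ meets $[N,\infty)$, so $n_{m+1}-1\ge N$, and by \emph{(b)} (which gives $g(n_m)>g(n_{m+1}-1)/2$)
$$\mu_m(A\setminus N)\le\frac{\card(A\cap n_{m+1})}{g(n_m)}\le\frac{\card(A\cap(n_{m+1}-1))+1}{g(n_m)}\le 2\,\frac{\card(A\cap(n_{m+1}-1))+1}{g(n_{m+1}-1)}.$$
Writing $j=n_{m+1}-1\ge N$, the right-hand side equals $2\bigl(\card(A\cap j)/g(j)+1/g(j)\bigr)$, so $\sup_m\mu_m(A\setminus N)\le 2\sup_{j\ge N}\bigl(\card(A\cap j)/g(j)+1/g(j)\bigr)\to0$ as $N\to\infty$ (using also $g(j)\to\infty$), i.e. $A\in\J$.

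For $\J\subseteq\Z_g$ I would take $A\in\J$ and $\varepsilon>0$, choose $N_0$ with $\sup_m\mu_m(A\setminus N_0)<\varepsilon$, and then $m_0$ with $n_{m_0}\ge N_0$; for every $m\ge m_0$ with $I_m\ne\emptyset$ this yields $\mu_m(A)=\mu_m(A\setminus N_0)<\varepsilon$, i.e. $\card(A\cap I_m)<\varepsilon\,g(n_m)<\varepsilon\,2^{m+1}$ by \emph{(b)}. Then for $n\in I_k$ with $k\ge m_0$,
$$\card(A\cap n)\le\card(A\cap n_{k+1})=\card(A\cap n_{m_0})+\sum_{m_0\le m\le k}\card(A\cap I_m)<\card(A\cap n_{m_0})+\varepsilon\sum_{m\le k}2^{m+1},$$
and $\sum_{m\le k}2^{m+1}<2^{k+2}\le 4\,g(n_k)\le 4\,g(n)$ by \emph{(a)}, \emph{(b)}. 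Dividing by $g(n)$ and letting $n\to\infty$ (the fixed finite quantity $\card(A\cap n_{m_0})$ is absorbed since $g(n)\to\infty$) gives $\limsup_n\card(A\cap n)/g(n)\le4\varepsilon$; as $\varepsilon$ was arbitrary, $A\in\Z_g$.

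I expect the second inclusion to be the main obstacle, and within it the crucial point is observation \emph{(b)}: the condition $I_k\ne\emptyset$ forces $g(n_k)<2^{k+1}$, so the numbers $g(n_k)$ over nonempty blocks grow at least geometrically and the sum $\sum_{m\le k}g(n_m)$ is dominated, up to a bounded factor, by its last term $g(n_k)\le g(n)$ — without this one only gets a useless factor of $k$. Monotonicity of $g$ enters both directions and cannot be dispensed with: on the interior of a block where a non-monotone $g$ dips, $\card(A\cap n)/g(n)$ can spike while $\J$ does not see it, so the statement should be read for nondecreasing $g$ (the case actually used).
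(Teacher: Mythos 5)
Your proof is correct, but it takes a genuinely different route from the paper's. The paper gives no proof from scratch; instead it reduces the statement to \cite[Theorem~3.2]{Den}, observing that the partition $[n_k,n_{k+1})$ defined here and the partition $[m_k,m_{k+1})$ used in \cite{Den} (where $m_k=\min\{n:g(n)\geq 2g(m_{k-1})\}$) interleave so that each block of one meets at most two consecutive blocks of the other, whence $\card(A\cap[n_k,n_{k+1}))\leq 2\card(A\cap[m_i,m_{i+1}))$ for a suitable $i$ and vice versa, so the two sequences of measures generate the same density ideal. You instead verify $\Z_g=\J$ directly, and the engine in both directions is your observation~(b): on a nonempty block the value $g(n_k)$ is pinned between $2^k$ and $2^{k+1}$, which for $\Z_g\subseteq\J$ controls $g(n_m)$ by $g(n_{m+1}-1)/2$, and for $\J\subseteq\Z_g$ makes $\sum_{m\le k}g(n_m)$ comparable to its last term so that summing block estimates does not introduce a factor of $k$. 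The paper's reduction is shorter given the earlier theorem; your proof is self-contained and makes the geometric-growth mechanism explicit, which the reduction hides. Your remark that monotonicity of $g$ is genuinely used is also well taken: the statement as printed assumes only $g\in H$, but without monotonicity $g$ may dip inside a block $[n_k,n_{k+1})$ and $\card(A\cap n)/g(n)$ can blow up there while none of the $\mu_m$ sees it (e.g.\ $g(n)=n$ except $g(2^k)=k+1$ gives a non-monotone $g\in H$ with $\Z_g\subsetneq\J$); this matches the intended reading $g\in H^\uparrow$, which is what \cite[Theorem~3.2]{Den} and the later Proposition~\ref{EU} actually use.
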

The only difference between this Proposition and Theorem~3.2 from \cite{Den} is that originally in the sequence instead of $n_k$, as defined above, there was $m_k=\min\{n\in\omega:g(n)\geq 2g(m_{k-1})\}$. It is not difficult to check that  every $[n_k,n_{k+1})$ is split between at most two neighboring intervals $[m_i,m_{i+1})$ and every interval $[m_i,m_{i+1})$ is split between at most two neighboring intervals $[n_k,n_{k+1})$. Therefore, when $A\subseteq\omega$,  for every $k\in\omega$ there is $i\in\omega$ such that  $\card(A \cap [n_k,n_{k+1}))\leq 2\card(A \cap [m_i,m_{i+1}))$ and vice versa. There is no consequence of the fact that some intervals $[n_k,n_{k+1})$ may be empty.

Moreover, it is easy to see that a density ideal generated by the sequence $(\phi_k)_{k\in\omega}$ of measures given by $\phi_k(A)=\frac{\card(A\cap [a_k,a_{k+1}))}{g(a_k)}$ for some increasing sequence of natural numbers $(a_k)_{k\in\omega}$, is the same as the one generated by the sequence $(\psi_k)_{k\in\omega}$ of measures given by $\psi_k(A)=\frac{\card(A\cap [a_k,a_{k+1}))}{2^{l_k}} $, where $l_k=\min\{n\in\omega: g(n_l)\geq 2^n  \}$. 

By the argument about splitting intervals, given above, we clearly get that the ideals generated by measures  $\nu_k(A)=\frac{\card(A\cap [n_k,n_{k+1}))}{2^k}$ and $\xi_k(A)=\frac{\card(A\cap [m_k,m_{k+1}))}{2^{l_k}}$ are the same.

\begin{prop}
\label{EU}
Let $g\in H^\uparrow$. Then the ideal $\Z_g$ is an Erd\H{o}s-Ulam ideal if and only if the sequence $(\card(g^{-1}([2^n,2^{n+1})))/2^n)_{n\in\omega}$ is bounded. 
\end{prop}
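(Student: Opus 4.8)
The plan is to deduce the statement from Proposition~\ref{densities} together with the characterization of Erd\H{o}s--Ulam ideals among Farah's density ideals provided by Theorem~\ref{density-is-eu}(b). First I would rewrite $\Z_g$ in a convenient form. By Proposition~\ref{densities}, with $n_0=0$ and $n_k=\min\{n:g(n)\geq 2^k\}$ for $k>0$, the ideal $\Z_g$ is the density ideal generated by the measures $\mu_k(A)=\card(A\cap[n_k,n_{k+1}))/g(n_k)$. I claim it is equally generated by $\nu_k(A)=\card(A\cap[n_k,n_{k+1}))/2^k$. Indeed, whenever $[n_k,n_{k+1})\neq\emptyset$ we have $2^k\leq g(n_k)<2^{k+1}$ (if $g(n_k)\geq 2^{k+1}$, then $n_k$ already lies in $\{n:g(n)\geq 2^{k+1}\}$, forcing $n_{k+1}=n_k$), so $\mu_k$ and $\nu_k$ have the same finite, pairwise disjoint supports and satisfy $\tfrac12\nu_k(A)<\mu_k(A)\leq\nu_k(A)$ for every $A$; hence $\sup_m\nu_m(A\setminus n)$ and $\sup_m\mu_m(A\setminus n)$ tend to $0$ simultaneously, and the two sequences generate the same density ideal (empty intervals carry the zero measure and may be discarded). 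This is exactly the kind of replacement discussed after Proposition~\ref{densities}.

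The key observation is that, since $g$ is nondecreasing, $n\in[n_k,n_{k+1})$ if and only if $2^k\leq g(n)<2^{k+1}$, i.e.\ $[n_k,n_{k+1})=g^{-1}([2^k,2^{k+1}))$ for $k\geq 1$ (for $k=0$ the two sets differ only by the finite set $g^{-1}(\{0\})$, which is harmless below). Therefore $\nu_k(\omega)=\card(g^{-1}([2^k,2^{k+1})))/2^k$, so the sequence occurring in the statement is precisely $(\nu_k(\omega))_{k\in\omega}$, up to its irrelevant $0$-th term.

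It then remains to apply Theorem~\ref{density-is-eu}(b): $\Z_g$, being generated by $(\nu_k)$, is an Erd\H{o}s--Ulam ideal if and only if (D1), (D2) and (D3) hold for $(\nu_k)$. But (D2) and (D3) are automatic for every simple density ideal, as recalled in the paragraph following Theorem~\ref{density-is-eu}: (D2) holds because $\sup_i\nu_k(\{i\})\leq 2^{-k}\to 0$ (equivalently, because $\Z_g$ is tall), and (D3) holds because it is equivalent to $\omega\notin\Z_g$, which is true since $g\in G$. (For a self-contained proof of (D3): $\limsup_n n/g(n)>0$ gives $\varepsilon>0$ and infinitely many $k$ with $n_{k+1}>\varepsilon 2^k$, i.e.\ $\sum_{i\leq k}(n_{i+1}-n_i)>\varepsilon 2^k$; if $\nu_k(\omega)=(n_{k+1}-n_k)/2^k\to 0$, a routine geometric-series estimate forces $n_{k+1}/2^k\to 0$, a contradiction.) Consequently the three conditions collapse to (D1) alone, namely $\sup_k\nu_k(\omega)<\infty$, which by the previous paragraph says exactly that $(\card(g^{-1}([2^k,2^{k+1})))/2^k)_{k\in\omega}$ is bounded.

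I expect the only delicate point to be the bookkeeping in the first two paragraphs: verifying that passing from $g(n_k)$ to $2^k$ in the denominators leaves the generated ideal unchanged, and checking that the finitely many empty intervals together with the $k=0$ edge case affect neither the identification $[n_k,n_{k+1})=g^{-1}([2^k,2^{k+1}))$ nor the boundedness question. Once these are settled, the equivalence follows immediately from Theorem~\ref{density-is-eu}(b).
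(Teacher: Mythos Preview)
Your proof is correct and follows essentially the same route as the paper's: reduce to Theorem~\ref{density-is-eu}(b), note that (D2) and (D3) are automatic for simple density ideals, and identify (D1) with the boundedness of $\card(g^{-1}([2^k,2^{k+1})))/2^k$ via the equality $[n_k,n_{k+1})=g^{-1}([2^k,2^{k+1}))$. The only cosmetic difference is that you first replace $(\mu_k)$ by the comparable sequence $(\nu_k)$ with denominators $2^k$, whereas the paper works directly with $(\mu_k)$ and uses the estimate $2^k\leq g(n_k)<2^{k+1}$ at the very last step to pass between $\sup_k\mu_k(\omega)<\infty$ and $\sup_k\card(g^{-1}([2^k,2^{k+1})))/2^k<\infty$; your version is slightly longer but also handles the $k=0$ edge case more carefully than the paper does.
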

\begin{proof}
By Theorem~\ref{density-is-eu} and the comments below it, we only need to check equivalence with the condition $(D1)$.
Observe that 
$$\mu_k(\omega)=\frac{\card(\omega\cap [n_k,n_{k+1}))}{g(n_k)}=\frac{\card(g^{-1}([2^k,2^{k+1})))}{g(n_k)}$$
for all $k \in \omega$. Thus, $\sup_{n\in\omega} \mu_n(\omega) < \infty$ if and only if $\sup_{n\in\omega} \card(g^{-1}([2^n,2^{n+1})))/2^n < \infty$.
\end{proof}

Now we will give another characterization of when $\Z_f$ is an Erd\H{o}s-Ulam ideal. The following condition is more similar to the one from Theorem \ref{moce} and will allow us to compare cardinality of $S_f$ with the fact that $\Z_f$ is an Erd\H{o}s-Ulam ideal. Recall that "for almost all $n\in\omega$" means "for all but finitely many $n\in\omega$". 

\begin{prop}\label{EU2}
Let $f\in H^\uparrow$. Then the ideal $\Z_f$ is an Erd\H{o}s-Ulam ideal if and only if for each $M>0$ there is such $L>0$ that for almost all $n\in\omega$ we have
$$\frac{f(n+\lfloor Lf(n)\rfloor)}{f(n)}>M.$$
\end{prop}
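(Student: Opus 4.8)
The plan is to reduce everything to Proposition \ref{EU} and then work with the ``inverse sequence'' of $f$. First I would set $n_k=\min\{n\in\omega:f(n)\geq 2^k\}$ for all $k\geq 0$; since $f$ is nondecreasing, $f^{-1}([2^k,2^{k+1}))$ is exactly the interval $[n_k,n_{k+1})$, so $\card(f^{-1}([2^k,2^{k+1})))=n_{k+1}-n_k$. By Proposition \ref{EU}, $\Z_f$ is an Erd\H{o}s--Ulam ideal if and only if the sequence $((n_{k+1}-n_k)/2^k)_{k\in\omega}$ is bounded, say by some $C\geq 1$. It thus suffices to show that this boundedness is equivalent to the stated condition on $f$.

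For the direction ``bounded $\Rightarrow$ condition'', fix $M>0$, pick an integer $j_0$ with $2^{j_0}\geq 2M$, and put $L:=C2^{j_0}+1$. Given any $n$ with $f(n)\geq 1$, let $k$ be the unique index with $n\in[n_k,n_{k+1})$, so $2^k\leq f(n)<2^{k+1}$. From $n_{k+j_0}-n_k=\sum_{i=k}^{k+j_0-1}(n_{i+1}-n_i)\leq C2^k(2^{j_0}-1)<C2^{j_0}2^k$ together with $\lfloor Lf(n)\rfloor\geq Lf(n)-1\geq L2^k-1\geq C2^{j_0}2^k$, I would get $n+\lfloor Lf(n)\rfloor\geq n_k+C2^{j_0}2^k\geq n_{k+j_0}$, and hence, by monotonicity of $f$, $f(n+\lfloor Lf(n)\rfloor)\geq 2^{k+j_0}\geq 2M\cdot 2^k=M\cdot 2^{k+1}>Mf(n)$. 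This yields the condition for every $n$ with $f(n)\geq 1$, in particular for almost all $n$.

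For the converse I would argue by contraposition: assume $((n_{k+1}-n_k)/2^k)_k$ is unbounded, take $k_j\uparrow\infty$ with $(n_{k_j+1}-n_{k_j})/2^{k_j}\to\infty$, and claim the condition fails for $M=2$. Fix an arbitrary $L>0$. For all large $j$ the interval $[n_{k_j},n_{k_j+1})$ is nonempty, so $f(n_{k_j})\in[2^{k_j},2^{k_j+1})$, and moreover $n_{k_j+1}-n_{k_j}>L2^{k_j+1}$; hence for $n:=n_{k_j}$ we have $n+\lfloor Lf(n)\rfloor<n_{k_j}+L2^{k_j+1}<n_{k_j+1}$, so $n+\lfloor Lf(n)\rfloor\in[n_{k_j},n_{k_j+1})$ and therefore $f(n+\lfloor Lf(n)\rfloor)<2^{k_j+1}\leq 2f(n)$. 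Since the points $n_{k_j}$ are eventually distinct (if $n_{k_j}=n_{k_{j'}}$ with $k_j<k_{j'}$ then $f(n_{k_j})\geq 2^{k_{j'}}\geq 2^{k_j+1}$, a contradiction), this produces infinitely many $n$ with $f(n+\lfloor Lf(n)\rfloor)/f(n)\leq 2$, i.e. the condition fails for $M=2$.

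The only genuinely delicate point is the bookkeeping in the first implication: ensuring that a single $L$, depending on $M$ but not on $n$, works uniformly, which comes down to correctly tracking the geometric sum $n_{k+j_0}-n_k<C2^{j_0}2^k$ and the factor-of-two slack in $2^k\leq f(n)<2^{k+1}$. Everything else (the floor estimates, monotonicity, eventual distinctness of the $n_{k_j}$) is routine, and since both halves only need ``for almost all $n$'' in their easy direction, no special care about finite exceptional sets is required.
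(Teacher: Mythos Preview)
Your proof is correct and follows essentially the same approach as the paper: both reduce the question to Proposition~\ref{EU} via the sequence $n_k=\min\{n:f(n)\geq 2^k\}$ and then translate the boundedness of $(n_{k+1}-n_k)/2^k$ into the growth condition on $f$. The only cosmetic differences are that you argue the reverse implication by contraposition rather than directly, and your estimate $n_{k+j_0}-n_k<C2^{j_0}2^k$ via a geometric sum is slightly cleaner than the paper's summation of $\card(f^{-1}([2^i,2^{i+1})))$ from $i=0$.
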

\begin{proof}
($\Rightarrow$):
Let $M>0$. There is such $k_0\in\omega$ that $2^{k_0}>M$. Since $\Z_f$ is an Erd\H{o}s-Ulam ideal, by Proposition \ref{EU} there is such $k_1\in\omega$ that $\card(f^{-1}[[2^n,2^{n+1})])\leq 2^{n+k_1}$ for each $n\in\omega$. Let $L=2^{k_0+k_1 +1}$. We will show that $\frac{f(n+\lfloor Lf(n)\rfloor)}{f(n)}>M$ whenever $f(n)\geq 1$ (it is enough since $f(n) \to \infty)$. Fix any $n$ satisfying this condition and let $k\in\omega$ be given by $2^k\leq f(n)<2^{k+1}$. Then 
$f(n+\lfloor Lf(n)\rfloor)\geq f(n+2^{k_0+k+k_1 +1})\geq f(2^{k_0+k+k_1 +1})$. Moreover,
$$
\card(f^{-1}([0,2^{k_0 + k + 1}))) = \sum_{i=0}^{k_0 +k} \card\left(f^{-1}([2^i, 2^{i+1}))\right) \leq \sum_{i=0}^{k_0 +k} 2^{k_1 + i} =
$$
$$
= 2^{k_1} \frac{1-2^{k_0 +k+1}}{1-2} = 2^{k_0 + k_1 + k+1}- 2^{k_1} < 2^{k_0 + k_1 + k+1},
$$
thus $f(2^{k_0+k+k_1 +1 })\geq 2^{k_0+k+1}$ and hence 
$$
\frac{f(n+\lfloor Lf(n)\rfloor)}{f(n)}\geq\frac{2^{k_0+k+1}}{2^{k+1}}=2^{k_0}>M,
$$
so we are done. \\
($\Leftarrow$):
Let $M=2$. Then there are such $L>0$ and $n_0\in\omega$ that $\frac{f(n+\lfloor Lf(n)\rfloor)}{f(n)}>M$ for all $n>n_0$. Let $\delta=2L$. We will show that $\card(f^{-1}[[2^n,2^{n+1})])\leq 2^{n}\delta$ for all such $n\in\omega$ that $f(n_0)<2^n$. Fix such $n$ and let $k=\min f^{-1}[[2^n,2^{n+1})]$ (the case $f^{-1}[[2^n,2^{n+1})]=\emptyset$ is trivial). Note that $k>n_0$ and thus 
$$
2<\frac{f(k+\lfloor Lf(k)\rfloor)}{f(k)}\leq
\frac{f(k+\lfloor L 2^{n+1}\rfloor)}{2^n} \leq \frac{f(k+\lfloor 2^n\delta\rfloor)}{2^n}.
$$
Hence, $f(k+\lfloor 2^n\delta\rfloor)>2^{n+1}$ and $\card(f^{-1}[[2^n,2^{n+1})])\leq \lfloor 2^n\delta\rfloor \leq 2^n \delta$. Thus, we obtain that the sequence $(\card(f^{-1}[[2^n,2^{n+1})])/2^{n})_{n\in\omega}$ is bounded by $\delta$ and we are done by Proposition \ref{EU}.
\end{proof}
\begin{Rem}
The condition obtained in Proposition \ref{EU2} is equivalent to the analogous one in which "for almost all $n\in\omega$" is replaced by "for all $n\in\omega$". 
\end{Rem}
Now we will give some examples of ideals, which will show that there is no dependence between the cardinality of $S_f$ and the fact that $\Z_f$ is an Erd\H{o}s-Ulam ideal. Recall that, by Theorem \ref{moce}, the condition $\card(S_f)=1$ is equivalent to the following one: there are such $M>0$ and $\varepsilon>0$ that for almost all $n\in\omega$ we have 
$$
\frac{f(n+\lfloor\varepsilon f(n)\rfloor)}{f(n)}\leq M.
$$

We start by observing that the classical density ideal $\Z$ is an EU ideal and $\card(S_{\text{id}})=1$. The first fact is well-known (it may also be easily proved using Proposition \ref{EU2}) and the second one follows from
$$
\frac{\text{id}(n + \lfloor \varepsilon \cdot\text{id}(n) \rfloor}{\text{id}(n)} = 
\frac{n + \lfloor \varepsilon n \rfloor }{n} \leq 1 + \varepsilon,
$$
which holds for any $\varepsilon>0$ (so it is enough to fix $\varepsilon>0$, $M> 1 + \varepsilon$, and use Theorem \ref{moce}).

Now we give an example of such $f\in H^\uparrow$ that $\Z_f$ is an Erd\H{o}s-Ulam ideal and $\card(S_f)=\ce$. 
\begin{ex}
Let us set $f(0)=0$ and $f(n) = (k+1)!$ whenever $n \in [k!, (k+1)!)$. Clearly, $f\in H^{\uparrow}$ (as $(k!-1)/f(k!-1) = (k!-1)/k! \to 1 \neq 0$). What is more, $\Z_f$ is as required.
\begin{itemize}
 \item Fix $ \varepsilon >0$. Then for such $k$ that $\varepsilon f(k!-1) \geq 1$, we have
$$
\frac{f(k!-1 + \lfloor \varepsilon f(k!-1)\rfloor) }{f(k!-1)} = 
\frac{f(k!-1 + \lfloor \varepsilon f(k!-1)\rfloor) }{k!} \geq
\frac{f(k!)}{k!} = \frac{(k+1)!}{k!}=k+1 \to \infty,
$$ 
hence, by Theorem \ref{moce}, $\card(S_f) = \ce$.
\item Take any $M>0$, set $L=1$, and for $n \in \omega$ find such $k \in \omega$ that $n \in [k! , (k+1)!)$. Then 
$$
\frac{f(n + \lfloor L f(n) \rfloor)}{f(n)} = \frac{f(n + (k+1)!)}{(k+1)!} \geq \frac{(k+2)!}{(k+1)!} = k+2 \to \infty,
$$
so, by Proposition \ref{EU2}, the ideal $\Z_f$ is EU. 
\end{itemize}
\end{ex}

The next example is such that $\Z_f$ is not an Erd\H{o}s-Ulam ideal and $\card(S_f)=\ce$. 
\begin{ex}
Define $m_0=0$, $f(m_0)=1$, $m_{k}=\sum_{i=0}^{k} i!$, and $f(m_{k})=k!$ for $k \geq 1$. Let $f(n)=k!$ for all $m_k<n<m_{k+1}$. Clearly, $f\in H^\uparrow$. Moreover, $\Z_f$ is as required.
\begin{itemize}
 \item Fix $\varepsilon>0$. Then whenever $\varepsilon f(m_{k+1}-1) \geq 1$, we get
 $$
 \frac{f(m_{k+1}-1 + \lfloor \varepsilon f(m_{k+1} -1)\rfloor)}{f(m_{k+1}-1)}=
 \frac{f(m_{k+1}-1 + \lfloor \varepsilon f(m_{k+1} -1)\rfloor)}{f(m_k)} \geq \frac{f(m_{k+1})}{k!} = ~\frac{(k+1)!}{k!} = k+1 \to \infty,
 $$
 so, by Theorem \ref{moce}, we get $\card(S_f)= \ce$.
\item For an arbitrary $L>0$ and $k>L-1$ we get 
$$
\frac{f(m_k + \lfloor L f(m_k)\rfloor)}{f(m_k)}= 
\frac{f(m_k + \lfloor L k! \rfloor)}{f(m_k)}=
\frac{f(m_k)}{f(m_k)}=1,
$$
since $m_k + L k! <m_k + (k+1)!= m_{k+1}$. Thus, by Proposition \ref{EU2}, we see that $\Z_f$ is not an EU ideal. 
\end{itemize}
\end{ex}

Finally, we give an example of such $f\in H^\uparrow$ that $\Z_f$ is not an Erd\H{o}s-Ulam ideal and $\card(S_f)=1$. 
\begin{ex}
Define $m_0=0$, $f(m_0)=1$, $m_{k+1}=m_k+(k+1)f(m_k)$, and $f(m_{k+1})=2^{k+1}$. Let $f(n)=f(m_k)$ for all $m_k<n<m_{k+1}$. It is easy to see that $f\in H^\uparrow$. Moreover, $\Z_f$ is as required.
\begin{itemize}
 \item Set $\varepsilon=1$, $M=2$, and for $n \in \omega$ find such $k \in \omega$ that $n \in [m_k, m_{k+1})$. Note that 
$$m_{k+2}-m_{k+1}=(k+2)f(m_{k+1})>f(m_k)=f(n),$$ thus $n+f(n)<m_{k+2}$, and
 $$
\frac{f(n + \lfloor \varepsilon f(n)\rfloor)}{f(n)}=
\frac{f(n + f(n))}{f(n)} = \frac{f(n+f(n))}{f(m_k)} \leq \frac{f(m_{k+1})}{f(m_k)} = 2 = M,
 $$
so, by Theorem \ref{moce}, we get $\card(S_f)= 1$.
\item For an arbitrary $L>0$ and $k>L-1$ we get 
$$
\frac{f(m_k + \lfloor L f(m_k)\rfloor)}{f(m_k)}= 
\frac{f(m_k)}{f(m_k)}=1,
$$
since $m_k + L f(m_k ) < m_k + (k+1)f(m_k)= m_{k+1}$. Thus, by Proposition \ref{EU2}, we see that $\Z_f$ is not an EU ideal. 
\end{itemize}
\end{ex}
So far in this Section we were dealing with the problem regarding when a simple density ideal is an EU ideal. A forthcoming paper \cite{EUvsGD} characterizes EU ideals which are simple density ideals. Now we move to another question: when a Farah's density ideal is a simple density ideal? Note that, as mentioned earlier, such ideal has to be tall. We give only a partial answer to the above question, while the full characterization of this problem remains open. 

For a measure $\mu$, by $\supp(\mu)$ we denote its support, i.e., $\supp(\mu)=\{n\in\omega:\mu(\{n\})>0\}$.

\begin{prop}\label{nic}
Let us assume that $(\mu_n)_{n\in\omega}$ is such a sequence of measures that:
\begin{itemize}
\item[(i)] $I_n := \supp(\mu_n)$ is an interval for all $n$, and $\max (I_n) < \min(I_{n+1})$;
\item[(ii)] each $\mu_n$ is uniformly distributed on its support $I_n$;
\item[(iii)] $a_n \min (I_n) \to 0$ and $a_n>0$ for each $n$, where $a_n = \mu_n(\{ m\})$ for $m \in I_n$ (by $(ii)$, it doesn't matter which $m$ we pick);
\item[(iv)] $a_n \to 0$ (it is equivalent to the condition $(D2)$ and tallness of the generated ideal);
\item[(v)] $\mu_n(\omega) \not\to 0$ (it is equivalent to the condition $(D3)$ and $\omega$ not belonging to the generated ideal); 
\item[(vi)] $(a_n)_{n\in\omega}$ is a nonincreasing sequence.
\end{itemize} 
Then an ideal $\I$ generated by $(\mu_n)_{n\in\omega}$ is a simple density ideal generated by a function $g$ given by $g(k) = 1/a_0$ for $k \leq \max(I_0)$, and $g(k)=1/a_{n+1}$ for $k \in (\max(I_n), \max (I_{n+1})]$.
\end{prop}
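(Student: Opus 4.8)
The plan is to show that the step function $g$ belongs to $G$ and that $\Z_g=\I$. That $g\in G$ is immediate: $g(n)=1/a_n>0$ by (iii), $g(n)\to\infty$ by (iv), and since $g(\max(I_m))=1/a_m$ we have $\frac{\max(I_m)}{g(\max(I_m))}=a_m\max(I_m)\geq a_m\card(I_m)=\mu_m(\omega)$, which does not tend to $0$ by (v), so $\frac{n}{g(n)}\nrightarrow 0$; moreover $g$ is nondecreasing by (vi). The real work is to identify $\Z_g$ with $\I$, and for this it is convenient to first record that, because the supports $I_m=\supp(\mu_m)$ are finite intervals with $\max(I_m)<\min(I_{m+1})$, one has for every $A\subseteq\omega$
$$
A\in\I\ \Longleftrightarrow\ \mu_m(A)\to 0\ \Longleftrightarrow\ a_m\card(A\cap I_m)\to 0;
$$
indeed, if $\mu_m(A)\to 0$ then for $n$ past the supports of $\mu_0,\dots,\mu_{m_0}$ (with $m_0$ chosen so that $\mu_m(A)<\varepsilon$ for $m>m_0$) we get $\sup_m\mu_m(A\setminus n)\leq\varepsilon$, while conversely $\mu_m(A)\leq\mu_m\big(A\setminus\min(I_m)\big)\leq\sup_k\mu_k\big(A\setminus\min(I_m)\big)\to 0$ as $m\to\infty$.

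For the inclusion $\Z_g\subseteq\I$ I would simply substitute $n=\max(I_m)$ into the defining condition of $\Z_g$. Since $g(\max(I_m))=1/a_m$ and $\card(A\cap\max(I_m))\geq\card(A\cap I_m)-1$,
$$
\mu_m(A)=a_m\card(A\cap I_m)\leq\frac{\card(A\cap\max(I_m))}{g(\max(I_m))}+a_m,
$$
and both terms on the right tend to $0$ when $A\in\Z_g$ (using (iv) for the second one); hence $A\in\I$ by the description above.

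The reverse inclusion $\I\subseteq\Z_g$ has two ingredients. First, the set $W:=\omega\setminus\bigcup_m I_m$ belongs to $\Z_g$, hence so does every subset of it: if $n\in(\max(I_j),\max(I_{j+1})]$ then $g(n)=1/a_{j+1}$, and every element of $W$ below $n$ in fact lies below $\min(I_{j+1})$ (the block $I_{j+1}$ contributes nothing to $W$), so $\frac{\card(W\cap n)}{g(n)}\leq a_{j+1}\min(I_{j+1})\to 0$ by (iii). Now fix $A\in\I$ and split $A=(A\cap W)\cup A'$ with $A':=A\setminus W\subseteq\bigcup_m I_m$; the first piece lies in $\Z_g$ by what was just shown, so it suffices to check $A'\in\Z_g$. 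For $n\in(\max(I_{j-1}),\max(I_j)]$, where $g(n)=1/a_j$, the elements of $A'$ below $n$ split into those in $I_0\cup\dots\cup I_{j-1}\subseteq\{0,\dots,\min(I_j)-1\}$ and those in $I_j$, so $\card(A'\cap n)\leq\min(I_j)+\card(A\cap I_j)$, whence
$$
\frac{\card(A'\cap n)}{g(n)}=a_j\,\card(A'\cap n)\leq a_j\min(I_j)+\mu_j(A)\to 0
$$
by (iii) and $A\in\I$. Thus $A'\in\Z_g$, hence $A\in\Z_g$, and $\Z_g=\I$.

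The one point that needs care is the estimate on $\card(A'\cap n)$ in the last display. Trying to control the contribution of the ``earlier'' intervals $I_0,\dots,I_{j-1}$ by $\sum_{i<j}\mu_i(A)$ is useless, since $\mu_i(A)\to 0$ need not be summable. The way out — and the reason hypothesis (iii) is imposed — is that no information about $A$ is needed on those intervals: the crude bound $\sum_{i<j}\card(A\cap I_i)\leq\min(I_j)$ combined with $a_j\min(I_j)\to 0$ already kills that term. In other words, (iii) is exactly the condition ensuring that the step function $g$, whose value $1/a_j$ on the $j$-th block ignores how far from the origin $I_j$ is placed, still reproduces $\I$.
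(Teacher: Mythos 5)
Your proof is correct and takes essentially the same approach as the paper: both directions rest on the same estimates, namely evaluating the $\Z_g$-condition at $n=\max(I_m)$ to get $\Z_g\subseteq\I$, and bounding $\card(A\cap n)$ by $\min(I_j)+\card(A\cap I_j)$ together with hypothesis (iii) to get $\I\subseteq\Z_g$. Your explicit split of $A$ into its gap part $A\cap W$ and block part $A'$ is done in one combined estimate in the paper, but the underlying idea and the role of (iii) are identical.
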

\begin{proof}
We will prove that $g \in G$ in the last part of the proof. Firstly, we prove that $\Z_g \subset \I$, so let us fix $A \in \Z_g$. It is enough to show that $\mu_n (A) \to 0$. We have
$$
\mu_n(A) = \card(A \cap I_n) a_n \leq \frac{\card(A \cap \max(I_n))}{1/a_n}=\frac{\card(A \cap \max(I_n))}{g(\max(I_n))} \to 0,
$$
so we are done. \\
Now we move to proving $\I \subset \Z_g$, so let us fix $A \in \I$. For any $m > \max(I_1)$ we may find such $n \in \omega$ that $m \in (\max(I_n), \max(I_{n+1})]$, and then
$$
\frac{\card(A \cap m)}{g(m)} \leq \left(\min(I_{n}) + \card(A \cap I_{n})\right)a_{n} = \min(I_{n})a_{n} + \card(A \cap I_{n})a_{n}=  \min(I_{n})a_{n}+ \mu_n(A) \to 0,
$$
so we are done. We proved that $\mu_n (A) \to 0 \Leftrightarrow \card(A \cap n)/g(n) \to 0$, so it is enough to show that $g \in G $ now. Clearly, the values of $G$ are nonnegative and $g(n)\to \infty$ as $a_n \to 0$, so it is enough to show that $\frac{n}{g(n)} \not\to 0$. This follows immediately from the fact that $\omega \notin \I$ -- indeed,
$$
\frac{n}{g(n)} \to 0 \Leftrightarrow \frac{\card(\omega \cap n)}{g(n)} \to 0 \Leftrightarrow \mu_n(\omega) \to 0 \Leftrightarrow \omega \in \I,
$$
so we conclude that $g \in G$. 
\end{proof}
Now we need an idea of increasing-invariant ideals, introduced in the forthcoming paper \cite{EUvsGD} and inspired by \cite{Inv} (see also \cite[Example 3.2]{Den}). It will also be important in the next Section.
\begin{df}
We say that an ideal $\I$ on $\omega$ is \emph{increasing-invariant} if for every $B\in\I$ and $C\subseteq\omega$ satisfying $\card(C\cap n)\leq\card(B\cap n)$ for all $n$, we have $C\in\I$. Equivalently, for any increasing injection $f :\omega \to \omega$ and $B \in \I$, we have $f(B) \in \I$.
\end{df}
\begin{Rem}
Sometimes, an ideal $\I$ is called shift-invariant if $A\in\I$ implies $\{a+k:\ a\in A\}\cap\omega\in\I$ for every $k\in\mathbb{Z}$ (note that here the shift is the same for all points from $A$, while our notion of increasing-invariance allows different shifts for different points).
\end{Rem}
The next result establishes a connection of the above notion with simple density ideals. Actually, this connection is much deeper, as shown in the forthcoming \cite{EUvsGD}. We omit the proof, since it is obvious (see also \cite[Section 4]{Inv}).
\begin{prop}
Each simple density ideal is increasing-invariant. \qed
\end{prop}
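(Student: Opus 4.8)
The plan is simply to unwind the definitions. Write $\I=\Z_g$ for some $g\in G$ (by Proposition~1 one could even take $g\in H^\uparrow$, but this is not needed), and recall that $\I$ being increasing-invariant means: whenever $B\in\I$ and $C\subseteq\omega$ satisfies $\card(C\cap n)\leq\card(B\cap n)$ for all $n$, then $C\in\I$.

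So fix $B\in\Z_g$ and $C\subseteq\omega$ with $\card(C\cap n)\leq\card(B\cap n)$ for every $n\in\omega$. Since $g(n)>0$ for all $n$, dividing this inequality by $g(n)$ gives
$$
0\leq\frac{\card(C\cap n)}{g(n)}\leq\frac{\card(B\cap n)}{g(n)}
$$
for every $n\in\omega$. The right-hand side tends to $0$ as $n\to\infty$, because $B\in\Z_g$ means precisely $\overline{d_g}(B)=\limsup_{n\to\infty}\card(B\cap n)/g(n)=0$; hence by the squeeze theorem $\overline{d_g}(C)=0$, i.e.\ $C\in\Z_g$. This yields the ``domination of counting functions'' formulation of increasing-invariance. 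The equivalent ``increasing injection'' formulation follows immediately: for an increasing injection $f\colon\omega\to\omega$ and any $B\subseteq\omega$ one has $\card(f[B]\cap n)\leq\card(B\cap n)$ for all $n$ (if $f(b)<n$ then $b\leq f(b)<n$, so $f(b)\mapsto b$ injects $f[B]\cap n$ into $B\cap n$), so $B\in\Z_g$ implies $f[B]\in\Z_g$.

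There is essentially no obstacle here: the whole content is the monotonicity of $t\mapsto t/g(n)$ in the numerator, which is exactly why the authors call the proof obvious. The only point worth recording separately is the translation between the two formulations of increasing-invariance, given above.
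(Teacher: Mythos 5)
Your argument is correct and is exactly the definition-unwinding the paper has in mind when it omits the proof as obvious: dominate $\card(C\cap n)/g(n)$ by $\card(B\cap n)/g(n)$ and squeeze, plus the standard observation that an increasing injection $f$ satisfies $f(b)\geq b$ and hence $\card(f[B]\cap n)\leq\card(B\cap n)$. Nothing is missing.
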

We conclude this Section with some easy examples showing that we cannot simply omit any assumption in Proposition \ref{nic}. This is obvious for the conditions (iv) and (v), since since they are equivalent to tallness of $\I$ and $\omega \not\in \I$, respectively. Furthermore, without the condition (vi) it would be easy to construct a density ideal that is not increasing-invariant. Now we deal with the assumption (ii) (see also \cite[Example 3.2]{Den}).
\begin{ex}
Given an $n \in \omega$, we define $\mu_n$ as a probabilistic measure for which $\supp\mu_n=[2^n,2^{n+1})$, 
\newline 
$\mu_n([2^n,2^n+n))=1-\frac{1}{2^n}$, and $\mu_n$ is uniformly distributed on $[2^n,2^n+n)$ and $[2^n+n,2^{n+1}).$
Let us denote by $\I$ the EU ideal generated by $(\mu_n)_{n\in\omega}$, and put 
$$A=\bigcup_{n \geq 1} [2^n,2^n+n) \mbox{ and } B=\bigcup_{n \geq 1} [2^n-n,2^n).$$
Then, $A \notin \I$ and $B \in \I$. Indeed, $\mu_n(A) =1-\frac{1}{2^n} \not\to 0$ and $\mu_n(B) \leq \frac{1}{2^n}\to0$. If we assume that $\I$ is a simple density ideal, it has to be increasing-invariant. Thus, we get $A \in \I$, since $A$ is the image of $B$ through an increasing injection -- a contradiction.
\end{ex}
Now we move to the assumption (iii).
\begin{ex}
Let us take any sequence of measures $(\mu_n)_{n\in\omega}$ satisfying the assumptions of Proposition \ref{nic}. Define a sequence of measures $(\mu'_n)_{n\in\omega}$ in such a way that $\mu'_n$ is obtained by translating $\supp(\mu_n)$, and $\min(\supp(\mu'_{n+1}))-\max(\supp(\mu'_n)) > \card(\supp(\mu_{n+1}))$. Conditions (i), (ii), (iv), (v) and (vi) are clearly satisfied. However, an ideal $\I'$ generated by $(\mu'_n)$ is not increasing-invariant. Indeed, set  
$$
A=\bigcup_{n \geq 1} \supp(\mu'_n) \mbox{ and } B=\bigcup_{n \geq 1} [\min(\supp(\mu'_{n+1}))- \card(\supp(\mu'_{n+1})),\min(\supp(\mu'_{n+1}))).
$$
Then, clearly, $\mu'_n (A) = \mu_n (\omega) \not\to 0$ and $\mu'_n (B)= 0$ for all $n \in \omega$. However, $A$ is covered by an image of $B$ through an increasing injection $f$ given by $f(n) = n + \card(\supp(\mu'_{n+1}))$ whenever $n \in [\max(\supp(\mu'_n)), \max(\supp(\mu'_{n+1}))$, thus $\I'$ is not increasing-invariant and -- as a consequence -- also not a~simple density ideal. 
\end{ex}
One may say that the above Example deals only with a half of the condition (iii), however, the assumption $a_n>0$ is in fact only a notation -- by (v), we see that there are infinitely many such $n$ that $a_n>0$, thus we may just omit in the sequence $(\mu_n)$ these measures which are constantly equal to zero and the generated ideal remains the same.\\
Finally, we deal with the assumption (i).
\begin{ex}
Let $\mu_n$ be a probability measure uniformly distributed on a set $2\omega \cap [2n!, 2(n+1)!)$. The assumptions (ii), (iv), (v) and (vi) are clearly satisfied. In order to prove that (iii) holds, note that $a_n = 2/(2(n+1)! - 2n! ) = \frac{1}{n!n}$, and hence 
$$
a_n \min(\supp(\mu_n)) = \frac{1}{n!n} 2n!= \frac{2}{n} \to 0,
$$
so the condition (iii) is satisfied. Now we set $A:= 2\omega \setminus \{0\}$ and $B = \omega \setminus 2\omega$. Then, clearly, $\mu_n(A) = 1 $ and $\mu_n(B) = 0$ for $n \geq 1$, thus $A \notin \I$ and $B \in \I$. However, $A= B+1$, thus $\I$ is not increasing-invariant.
\end{ex}
\section{simple density ideals and homogeneity}
\label{sekhomogeneity}

Recall that if $A\notin\I$, then $\I|A=\{A\cap B: B\in\I\}$ is an ideal on $A$ (called the restriction of $\I$ to $A$) and that two ideals $\I$ and $\J$ are isomorphic ($\I\cong\J$) if there is such a bijection $\phi\colon\omega\to\omega$ that $A\in\I\Longleftrightarrow\phi^{-1}[A]\in\J$ for $A\subseteq\omega$. 

In this short Section we take a look on simple density ideals in the context of ideas which have recently appeared in the papers \cite{Inv} and \cite{kwetry}. In particular, we give a partial solution to \cite[Problem 5.8]{kwetry}: characterize ideals $\I$ such that if $\mathcal{I}|A\cong\mathcal{I}$, then it is witnessed by the increasing enumeration of $A$.

\begin{df}
Let $\I$ be an ideal on $\omega$. Then
$$H(\I)=\left\{A\subseteq\omega:\ \mathcal{I}|A\mbox{ is isomorphic to }\mathcal{I}\right\}$$
is called the \emph{homogeneity family} of the ideal $\mathcal{I}$. 
\end{df}

The next Theorem shows that all increasing-invariant ideals, in particular simple density ideals, possess the property mentioned in the above problem. 
\begin{thm}\label{homogeneity}
Let $\I$ be an increasing-invariant ideal. If $A\in H(\I)$ and $\{a_0,a_1\ldots\}$ is an increasing enumeration of $A$, then the function $f\colon\omega\to A$ given by $f(n)=a_n$ witnesses that $\I|A\cong\I$.
\end{thm}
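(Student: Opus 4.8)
The plan is to show directly that $f\colon\omega\to A$ realizes the isomorphism, i.e.\ that for every $Y\subseteq A$ one has $Y\in\I\iff f^{-1}[Y]\in\I$. One implication is free: if $f^{-1}[Y]\in\I$ then, since $f$ is an increasing injection of $\omega$ into itself and $Y=f[f^{-1}[Y]]$, increasing-invariance of $\I$ gives $Y\in\I$. So the real content is: if $Y\subseteq A$ and $Y\in\I$, then $f^{-1}[Y]\in\I$; equivalently, with $\I|A$ viewed as an ideal, $f^{-1}[\I|A]\subseteq\I$.

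First I would fix an isomorphism $\psi\colon\omega\to A$ witnessing $\I|A\cong\I$ (it exists since $A\in H(\I)$). Then $\psi^{-1}[\I|A]=\I$. On the other hand, for $X\in\I$ we have $f[X]\in\I|A$ by increasing-invariance, whence $X=f^{-1}[f[X]]\in f^{-1}[\I|A]$; thus $\I\subseteq f^{-1}[\I|A]$. Moreover $f^{-1}[\I|A]$ is again increasing-invariant: if $X\in f^{-1}[\I|A]$ and $\card(X'\cap n)\leq\card(X\cap n)$ for all $n$, then a short computation (using that $f$ is strictly increasing, so that $\card(f[X']\cap m)=\card(X'\cap\card(A\cap m))$) gives $\card(f[X']\cap m)\leq\card(f[X]\cap m)$ for all $m$, and since $f[X]\in\I|A\subseteq\I$ this forces $f[X']\in\I$, i.e.\ $X'\in f^{-1}[\I|A]$. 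Also $f^{-1}[\I|A]\cong\I|A\cong\I$. So everything reduces to the following statement, which is the heart of the matter: an increasing-invariant ideal has no \emph{proper} increasing-invariant extension that is isomorphic to it.

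The step I expect to be the main obstacle is precisely this last statement. In terms of the families of counting functions $\mathcal{C}\subseteq\mathcal{C}'$ (downward closed under the pointwise order, and closed under coordinatewise maximum of increments) that correspond to $\I$ and $f^{-1}[\I|A]$, one must rule out $\mathcal{C}\subsetneq\mathcal{C}'$ with the two associated ideals isomorphic; the difficulty is that an isomorphism is merely a permutation of $\omega$ and need not respect the counting-function order, so one cannot simply transport a witness of properness. I would attack it by choosing $c^{\ast}\in\mathcal{C}'\setminus\mathcal{C}$, normalizing the isomorphism on an initial segment (permissible, since finite permutations are automorphisms of every ideal), and then iterating the isomorphism on $c^{\ast}$ while using downward-closedness and closure under maxima to manufacture a counting function that is forced simultaneously into and out of $\mathcal{C}$, a contradiction. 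This yields $\mathcal{C}=\mathcal{C}'$, hence $f^{-1}[\I|A]=\I$, which is exactly the missing implication, and the theorem follows.
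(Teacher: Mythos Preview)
Your reduction is clean and correct: setting $\J:=f^{-1}[\I|A]$, you verify $\I\subseteq\J$, that $\J$ is again increasing-invariant, and that $\J\cong\I$ via the composite $\psi^{-1}\circ f$. Everything indeed reduces to the abstract lemma ``an increasing-invariant ideal has no proper increasing-invariant extension isomorphic to it.''

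The gap is that you do not prove this lemma. The sketch you offer (normalize on an initial segment, then ``iterate the isomorphism on $c^{\ast}$'') does not yield a contradiction as stated. Iterating the bijection $\sigma:=\psi^{-1}\circ f$ on a set $B\in\J\setminus\I$ only produces a strictly decreasing chain $\J\supsetneq\I\supsetneq\sigma^{-1}[\I]\supsetneq\cdots$ of ideals, and there is no reason the intermediate ideals remain increasing-invariant (the action of a general bijection does not respect the pointwise order on counting functions), so ``downward closure and closure under maxima'' cannot be invoked for them. You have abstracted away exactly the piece of structure that makes the argument go through: that the isomorphism $\J\cong\I$ factors as $\psi^{-1}\circ f$ with $f$ the \emph{increasing enumeration} of $A$.

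The paper's proof exploits precisely that structure. With $B\notin\I$, $f[B]=\{c_0<c_1<\cdots\}\in\I$, and $g=\psi$ the given isomorphism, the key pigeonhole fact is $\card(A\cap c_i)=b_i$ (because $c_i=a_{b_i}$). Hence among the $b_j+1$ points $\leq b_j$, at most $b_i$ can be sent by $g$ below $c_i$; this lets one choose, for each $j$, points $k_0^j,\ldots,k_j^j\leq b_j$ with $g(k_i^j)\geq c_i$. The resulting set $D=\bigcup\{k_i^j\}$ then dominates $B$ in counting (so $D\notin\I$) while $g[D]$ is dominated by $f[B]$ in counting (so $g[D]\in\I$), contradicting that $g$ is an isomorphism. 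If you try to carry this out in your abstract framework you will find that the inequality $\card(A\cap c_i)=b_i$ has no analogue once $f$ is forgotten; this is the missing idea in your sketch.
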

\begin{proof}
Take an increasing-invariant ideal $\I$ and a set $A\in H(\I)$. Suppose that $f(n)=a_n$ is not an isomorphism between  $\I|A$ and  $\I$. Let $g:\omega \rightarrow A$ be such an isomorphism. It is easy to see that for each  increasing-invariant ideal $\I$ every increasing function is such that $f[B]\in\I$ for all $B\in\I$, so there has to be a set $B\notin\I$ such that $f[B]\in\I$. Let $\{b_0,b_1,\ldots \}$ be an increasing enumeration of $B$ and let $\{c_0,c_1,\ldots\}$  be an increasing enumeration of $f[B]$.

We define inductively a sequence $(k_i^j)$ for $i,j\in\omega$, $i\leq j$. Let $k_0^0$ be a natural number such  that  $k_0^0\leq b_0$ and $g(k_0^0)\geq c_0$. We can find such an element, because $f$ is an increasing enumeration of $A$,  hence $\card(A \cap c_0)=b_0$. 

Assume that we have defined  $(k_i^j)$ for $i,j<n$, $i\leq j$. Now we choose the elements $k_0^n,\ldots , k_n^n$ in such a way that $\{k_0^{n-1},\ldots, k_{n-1}^{n-1}\}\subseteq\{k_0^n,\ldots,k_n^n\}$, $k_i^n\leq b_n$, and $g(k_i^n)\geq c_i$ for every $i\leq n$.

Define the set $D=\bigcup_{ i,j\in\omega, i\leq j}\{k_i^j\}$. Then, for each $n\in\omega$ we have $\card(D\cap n)\geq \card(B\cap n)$, thus $D\not\in\I$. On the other hand, for every $n\in\omega$  we have  $\card(g[D]\cap n)\leq \card(f[B]\cap n)$, hence $g[D]\in\I$. A contradiction with the fact that $g$ is an isomorphism.
\end{proof}

\begin{Rem}
Not all Erd\H{o}s-Ulam ideals have the property from Theorem \ref{homogeneity} -- it is easy to see that if $f(n)=1$ for odd $n$, and $f(n)=0$ for even $n$, then $\mathcal{EU}_f\upharpoonright(\omega\setminus\{0\})$ is isomorphic to $\mathcal{EU}_f$, but the increasing enumeration of $\omega\setminus\{0\}$ is not an isomorphism.
\end{Rem}

\begin{Rem}
Theorem \ref{homogeneity} in the case of the classical density ideal $\Z$ has been known earlier (cf. \cite[Theorem 5.6]{kwetry}). However, $\Z$ has been the only known example. 
\end{Rem}

By Corollary \ref{non-isomorphic} we get the following.

\begin{cor}
There are $\ce$ many non-isomorphic ideals $\I$ such that if $\mathcal{I}|A\cong\mathcal{I}$, then it is witnessed by the increasing enumeration of $A$.
\end{cor}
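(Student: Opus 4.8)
The plan is to simply combine three facts that have already been established in the paper. First I would invoke Corollary \ref{non-isomorphic}, which guarantees a family $\{\Z_{g_\xi} : \xi < \ce\}$ of pairwise non-isomorphic simple density ideals (this itself comes from the Kat\v{e}tov antichain of size $\ce$ built in Theorem \ref{antikate}, together with the observation that isomorphic ideals are $\leq_K$-comparable). So the ``$\ce$ many non-isomorphic'' part is free.

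Next I would recall that each $\Z_{g_\xi}$ is increasing-invariant: this is exactly the Proposition stated right before Theorem \ref{homogeneity} (``Each simple density ideal is increasing-invariant''). Hence Theorem \ref{homogeneity} applies verbatim to every member of the family: for each $\xi$, if $A\in H(\Z_{g_\xi})$, i.e.\ $\Z_{g_\xi}|A\cong\Z_{g_\xi}$, then the increasing enumeration of $A$ already witnesses this isomorphism. Therefore every $\Z_{g_\xi}$ has the property demanded in \cite[Problem 5.8]{kwetry}, and since there are $\ce$ of them and they are pairwise non-isomorphic, the Corollary follows.

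There is essentially no obstacle here -- the statement is a direct corollary, and the only thing worth spelling out is that the property in question is stated for a single ideal (it is not a relation between ideals), so no compatibility between the members of the family needs to be checked; one just applies Theorem \ref{homogeneity} pointwise. If one wanted to be slightly more self-contained, one could alternatively cite Theorem \ref{antikate} directly in place of Corollary \ref{non-isomorphic}, noting that a Kat\v{e}tov antichain of simple density ideals consists of pairwise non-isomorphic ideals, each of which is increasing-invariant, and then finish as above.
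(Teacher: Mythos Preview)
Your proposal is correct and matches the paper's own argument exactly: the paper simply cites Corollary \ref{non-isomorphic} (the $\ce$ many non-isomorphic simple density ideals) and implicitly combines it with Theorem \ref{homogeneity} together with the Proposition that every simple density ideal is increasing-invariant. There is nothing further to add.
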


We end our paper with an example of a simple density ideal $\Z_g$ which is anti-homogeneous, i.e., $H(\Z_g)$ is equal to $\{\omega\setminus A:A\in\Z_g\}$ (the filter dual to the ideal $\Z_g$).

\begin{ex}
Let $(I_n)$ be the sequence of consecutive intervals such that the length of each $I_n$ is $n!$. Let also $(\mu_n)$ be a sequence of measures on $\omega$, given by:
$$\mu_n(k)=\left\{\begin{array}{ll}
\frac{1}{n!} & \textrm{if } k\in I_n,\\
0 & \textrm{otherwise.}
\end{array}\right.$$
Consider the density ideal $\I$ generated by $(\mu_n)$. In \cite[Theorem 4.8]{kwetry} it was shown that this is an anti-homogeneous EU ideal. It can also be used as a "base" ideal that can be used for creating, after modifications, $\ce$ many non-isomorphic Boolean algebras of the form $\pt(\omega)/\I$ (cf. \cite{oliver}). Moreover, by Proposition \ref{nic}, this is also a simple density ideal. Indeed, condition $(iii)$, is satisfied, as $$a_n\min(I_n)=\frac{\sum_{i<n}i!}{n!}\leq \frac{(n-1)!+(n-2)(n-2)!}{n!}\to 0,$$
and all other conditions are trivial.
\end{ex}

\section*{Acknowledgment}

The second author was supported by the University of Silesia Mathematics Department (Iterative Functional Equations and Real Analysis program).

\end{document}